\newtheorem{assump}{Assumption}
\newcommand{\R}{\mathbb{R}}
\renewcommand{\P}{\mathbb{P}}
\newcommand{\bE}{\mathbb{E}}
\newcommand{\bN}{\mathbb{N}}
\newcommand{\T}{\mathsf{T}}
\newcommand{\m}{\hspace{0.25mm}}
\newcommand{\mm}{\hspace{5mm}}
\newcommand{\mmm}{\hspace{10mm}}
\newcommand{\mmmm}{\hspace{15mm}}
\newcommand{\n}{\hspace{-0.5mm}}
\newtheorem{example}[theorem]{Example}
\newcommand{\sgn}{\mathrm{sgn}}
\newcommand{\Lip}{\mathrm{Lip}}
\newcommand{\var}{\operatorname{Var}}
\DeclareMathOperator*{\argmin}{arg\,min}
\def\shuffle{\sqcup\mathchoice{\mkern-7mu}{\mkern-7mu}{\mkern-3.5mu}{\mkern-3.5mu}\sqcup\hspace{0.75mm}}
\def\textshuffle{\sqcup\mathchoice{\mkern-3mu}{\mkern-3mu}{\mkern-1mu}{\mkern-1mu}\sqcup}
\def\subshuffle{\hspace{0.25mm}\sqcup\hspace*{-0.55mm}\sqcup\hspace{0.25mm}}
\crefname{hypothesis}{Hypothesis}{Hypotheses}
\title{High order splitting methods for SDEs\\ satisfying a commutativity condition\thanks{
\textbf{Funding:} The first author was supported by the Department of Mathematical Sciences at the University of Bath as well as the DataSig programme under the EPSRC grant EP/S026347/1.
The second author acknowledges support from the \emph{Funda{\c c}$\tilde{\text{a}}$o para a Ci$\hat{e}$ncia e a Tecnologia} (Portuguese Foundation for Science and Technology) through the projects UIDB/00297/2020 and UIDP/00297/2020 (Center for Mathematics and Applications, CMA/FCT/UNL).
}}
\author{
James Foster
\thanks{\,\,University of Bath, Department of Mathematical Sciences. \texttt{jmf68@\hspace{0.25mm}bath.ac.uk}}.
\and 
Gon\c calo dos Reis
\thanks{\,\,University of Edinburgh, School of Mathematics.
\texttt{G.dosReis@\hspace{0.25mm}ed.ac.uk}}.
\and Calum Strange
\thanks{\,\,University of Edinburgh, School of Mathematics.
\texttt{c.strange-1@\hspace{0.25mm}ed.ac.uk}}.
}
\begin{document}

\maketitle

% REQUIRED
\begin{abstract}
In this paper, we introduce a new simple approach to developing and establishing the convergence of splitting methods for a large class of stochastic differential equations (SDEs), including additive, diagonal and scalar noise types.
The central idea is to view the splitting method as a replacement of the driving signal of an SDE, namely Brownian motion and time, with a piecewise linear path that yields a sequence of ODEs -- which can be discretized to produce a numerical scheme. 
This new way of understanding splitting methods is inspired by, but does not use, rough path theory.
We show that when the driving piecewise linear path matches certain iterated stochastic integrals of Brownian motion, then a high order splitting method can be obtained. We propose a general proof methodology for establishing the strong convergence of these approximations that is akin to the general framework of Milstein and Tretyakov. That is, once local error estimates are obtained for the splitting method, then a global rate of convergence follows. This approach can then be readily applied in future research on SDE splitting methods. By incorporating recently developed approximations for iterated integrals of Brownian motion into these piecewise linear paths, we propose several high order splitting methods for SDEs satisfying a certain commutativity condition. In our experiments, which include the Cox-Ingersoll-Ross model and additive noise SDEs (noisy anharmonic oscillator, stochastic FitzHugh-Nagumo model, underdamped Langevin dynamics), the new splitting methods exhibit convergence rates of $O(h^{3/2})$ and outperform schemes previously proposed in the literature.
\end{abstract}

% REQUIRED
\begin{keywords}
Numerical methods for SDEs, high order strong convergence, operator splitting
\end{keywords}

% REQUIRED
\begin{AMS}
	60H35, 60J65, 60L90, 65C30
\end{AMS}

\section{Introduction}

Stochastic differential equations (SDEs) are commonly used for modelling random continuous$\m$-time phenomena, with applications ranging from finance \cite{brigo2007interest, mao2007stochastic} and statistical physics \cite{leimkuhler2015ULD, milstein2021physics} to machine learning \cite{kidger2021NSDEs2, kidger2021NSDEs1, song2021scoredbased, welling2011SGLD, zhang2022sampling}.
In such applications, SDE solutions can rarely be obtained exactly or in closed-form, and so numerical methods and Monte Carlo simulation are often employed in practice.
\begin{figure}[!hbt]
    \centering\vspace*{-4mm}
    \includegraphics[width=0.95\textwidth]{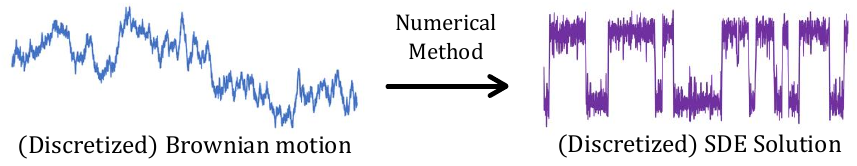}\vspace*{-1mm}
    \caption{In the Monte Carlo paradigm, information about the Brownian motion is generated and then mapped to a numerical solution of the SDE. Typically, only Brownian increments are sampled.}
    \label{fig:montecarlo}
\end{figure}

In this paper, we present a study of high order splitting-based numerical methods for Stratonovich SDEs of the form 
\begin{align}
    \label{eq:strat SDE}
    dy_t = f(y_t)\m dt + g(y_t) \circ d W_t\m, \hspace{2.5mm} y_0 \in L^2(\R^e),
\end{align}
where $W = (W^1,\cdots,W^d) = \{W_t\}_{t\in[0,T]}$ denotes a $d$-dimensional Brownian motion, $L^2(\R^e)$ is the space of $\R^e$-valued square-integrable random variables, the vector fields are given by $f\in\mathcal{C}^2(\R^e,\R^e)$ and $g=(g_1,\cdots,g_d)\in\mathcal{C}^3(\R^e,\R^{e \times d})$ where we understand \(g(y_t) \circ d W_t = \sum_{i=1}^d g_i(y_t) \circ d W_t^{i}\). The columns $\{g_i\}_{1\leq i\leq d}$ of $g$ can each be viewed as a vector field on $\R^e$ and are assumed to satisfy the following commutativity condition:
\begin{align}\label{eq:comm_condition}
\hspace{2.5mm}g_i^{\m\prime}(y)g_j(y) = g_j^{\m\prime}(y)g_i(y),\hspace{2.5mm}\forall y\in\R^e.
\end{align}
We also assume $g_i$ are globally Lipschitz continuous with globally Lipschitz derivatives.\smallbreak

Without the condition (\ref{eq:comm_condition}), high order numerical methods for SDEs require the use, or approximation, of second iterated integrals of the Brownian motion \cite{Rossler2010SRK}.
Generating both the increments and iterated integrals, or equivalently L\'{e}vy areas, of Brownian motion 
is a difficult problem \cite{clark1980convergence, dickinson2007optimal} and beyond the scope of this paper.
We refer the reader to \cite{davie2014levyarea, foster2020thesis, foster2021levyarea, gaines1994levyarea, mrongowius2022levyarea, wiktorsson2001levyarea} for studies on L\'{e}vy area approximation.
Nevertheless, there is a large variety of SDEs used in applications that satisfy (\ref{eq:comm_condition}),
such as SDEs with scalar, diagonal or additive noise types. While we focus on schemes
for SDEs satisfying the commutativity condition \eqref{eq:comm_condition}, the error analysis that we introduce for establishing convergence is generic and does not rely on this condition.  
\smallbreak

Inspired by rough path theory \cite{friz2010multidimensional}, which views SDEs as functions that map Brownian motion to continuous paths (see Figure \ref{fig:montecarlo}), we propose an approximation $y^\gamma = \{\m y_r^\gamma\m\}_{r\in[0,1]}\m$ for (\ref{eq:strat SDE}) that comes from the controlled differential equation (CDE),
\begin{align}\label{eq:intro_CDE}
    \hspace{20mm} dy^\gamma_r 
    =
    f(y^\gamma_r)\, d\gamma^\tau(r) + g(y^\gamma_r)\, d\gamma^{\m\omega}(r) \m, \mmm  y^\gamma_0 = y_0\m,
\end{align}
or equivalently 
\begin{align*}
    y^\gamma_r 
    =
    y_0 + \int_0^r f(y^\gamma_u)\, d\gamma^\tau(u) + \int_0^r g(y^\gamma_u)\, d\gamma^{\m\omega}(u) \m,
\end{align*}
where $\gamma = (\gamma^\tau, \gamma^{\m\omega})^\top : [0,1] \rightarrow \R^{1+d}$ is a parameterised (continuous) piecewise linear path designed to match certain iterated integrals of the ``space-time'' Brownian motion $\{(t, W_t)\}_{t\in[0,T]}\m$.
Since the path $\gamma$ is piecewise linear, it immediately follows that $d\gamma(r) = \frac{1}{r_{i+1} - r_i}\m\gamma_{r_i, r_{i+1}}\m dr$ for $r \in [r_i, r_{i+1}]$, where $r_i \in [0,1]$ is the parameter value at the start of the $i$-th piece of $\gamma$ and $\gamma_{r_i, r_{i+1}}$ is the increment of the linear piece. Therefore the CDE (\ref{eq:intro_CDE}) reduces to a sequence of ODEs, corresponding to each piece of $\gamma$, which can be discretized by a suitable ODE solver, such a Runge-Kutta method. Furthermore, we will see that this approach can be interpreted as a splitting method. We refer the reader to Section 3 of \cite{Buckwar2022splitting} for an overview of splitting methods for SDEs.\smallbreak

More generally, CDEs are one of the key objects in rough path theory \cite{friz2020course, friz2010multidimensional, lyons2007roughpaths} (often referred to as ``rough'' differential equations).  
However, we emphasise that this manuscript is \textit{not a rough paths paper} -- and no $p\m$-variation or lift maps are used. Instead, we will heavily draw upon ideas and interpretations from rough path theory. Similarly, we point towards \cite{cass2010densities, foster2020OptimalPolynomial, kidger2021thesis, kidger2020NCDEs, lyons2004cubature, morrill2022NCDEs, morrill2021NRDEs, sahani2020wongzakai} as works presenting results for stochastic processes or continuous data streams, without ``rough path'' statements, but making use of the machinery and insights that are provided by rough path theory.  
\smallbreak

Perhaps the simplest example of an approximation with the form of the CDE (\ref{eq:intro_CDE}) is the Wong-Zakai approximation \cite{sahani2020wongzakai, shmatkov2005wongzakai, wongzakai1965} where $\gamma$ is the standard piecewise linear discretization of space-time Brownian motion. However, the Wong-Zakai approach
only uses the increments of the Brownian motion, and is thus constrained to a first order convergence rate for SDEs satisfying the commutativity condition (\ref{eq:comm_condition}) \cite{clark1980convergence}.
We will show that by generating both increments and ``space-time'' L\'{e}vy areas of the
Brownian path, we can construct paths $\gamma$ yielding order 3/2 strong convergence rates.
\begin{definition}\label{def:st_levyarea}
The rescaled \textbf{space-time L\'{e}vy area} of a Brownian motion $W$ over
an interval $[s,t]$ corresponds to the signed area of the associated bridge process.
\begin{align*}
H_{s,t} := \frac{1}{h}\int_{s}^{t}\Big(W_{s,u} - \frac{u-s}{h}\,W_{s,t}\Big)\,du\m,
\end{align*}
where $h := t-s$ and $W_{s,u} := W_u - W_s$ for $u\in[s,t]$. We illustrate $H_{s,t}$ in Figure \ref{fig:st_levyarea}.
\begin{figure}[!hbt]
    \centering\vspace{-1.5mm}
    \includegraphics[width=0.65\textwidth]{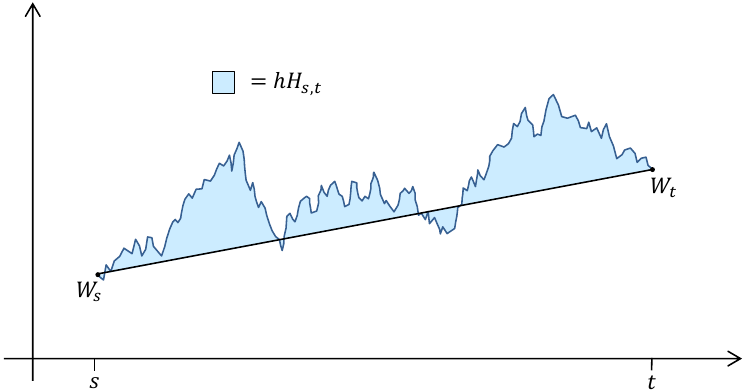}\vspace*{-2mm}
    \caption{Space-time L\'{e}vy area gives the area between a Brownian path and its linear approximant.}
    \label{fig:st_levyarea}
\end{figure}
\end{definition}

\begin{remark}
It was shown in \cite{foster2020OptimalPolynomial} that $H_{s,t}\sim\mathcal{N}\big(0, \frac{1}{12}h\big)$ is independent of $W_{s,t}$ when $d=1$. Since the coordinate processes of a Brownian motion are independent, it therefore follows that $\m W_{s,t}\m\sim\m\mathcal{N}\big(0, h I_d\big)\m$ and $\m H_{s,t}\m\sim\m\mathcal{N}\big(0, \frac{1}{12}h I_d\big)\m$ are independent.
\end{remark}

Using the increment $W_{s,t}$ and space-time L\'{e}vy area $H_{s,t}$ of the Brownian motion, we give an example of a path $\gamma$ and its associated 3/2 strong order spitting method. 

\begin{example}\label{ex:path_example}
Let $\gamma= (\gamma^\tau, \gamma^{\m\omega})^\top : [0,1] \rightarrow \R^{1+d}$ denote a piecewise linear path where the vertices between pieces are at $\m r_i := \frac{i}{5}\m$ for $\m 0\m\leq\m i\m\leq\m 5\m$ and the increments are
\begin{align*}
    \\[-30pt]\gamma_{r_i, r_{i+1}} =
        \begin{cases}
            \big(\frac{3-\sqrt{3}}{6}h, 0\big),  & \text{if }\,i=0
            \\[6pt]
            \big(0, \frac{1}{2}W_{s,t} + \sqrt{3} H_{s,t}\big),\hspace{-1mm} & \text{if }\,i=1
            \\[6pt]
            \big(\frac{\sqrt{3}}{3}h, 0\big), & \text{if }\,i=2
            \\[6pt]
            \big(0, \frac{1}{2}W_{s,t} - \sqrt{3} H_{s,t}\big),\hspace{-1mm} & \textrm{if }\,i=3
            \\[6pt]  
            \big(\frac{3-\sqrt{3}}{6}h, 0\big), & \text{if }\,i=4.           
        \end{cases}\begin{matrix}\\ 
        \mm \includegraphics[width=0.45\textwidth]{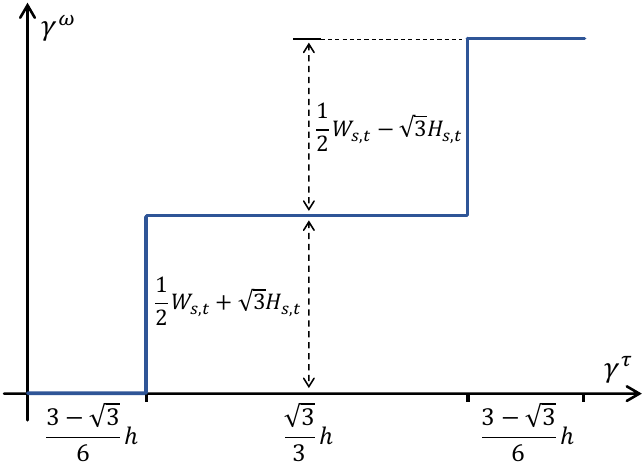} \end{matrix} \\[-20pt]
\end{align*}
(diagram not drawn accurately; the ``vertical'' pieces are only the same in distribution)\medbreak

Therefore, by replacing the driving signal $t\mapsto (t,W_t)$ in the SDE (\ref{eq:strat SDE}) with the parameterisation $r\mapsto \gamma_r$, 
the approximating CDE (\ref{eq:intro_CDE}) reduces to the splitting (formulated in a more classical way):
\begin{align*}
    y_1^\gamma  
    & =
    \exp\bigg(\frac{3-\sqrt{3}}{6}\m f(\m\cdot\m)\m h\bigg)
    \exp\bigg(g(\m\cdot\m)\Big(\frac{1}{2}W_{s,t} - \sqrt{3} H_{s,t}\Big)\bigg)
    \\
    &\mmm \exp\bigg(\frac{\sqrt{3}}{3}\m f(\m\cdot\m)\m h\bigg)
    \exp\bigg(g(\m\cdot\m)\Big(\frac{1}{2}W_{s,t}
    + 
    \sqrt{3} H_{s,t}\Big)\bigg)\exp\bigg(\frac{3-\sqrt{3}}{6}\m f(\m\cdot\m)\m h\bigg)y_0^\gamma\m,\nonumber
\end{align*}
where $\exp(V)\m x$ denotes the solution $z_1$ at time $u=1$ of the ODE $\,z^\prime = V(z),\,\,z(0) = x$.
\end{example}

The main result of this paper will allow us to establish the high order strong convergence of (\ref{eq:intro_CDE}) primarily by checking that the path $\gamma$ has the following properties:
\begin{align}
    \hspace{2mm}\gamma^{\m\omega}(1) - \gamma^{\m\omega}(0) = W_{s,t}\m, \hspace{7.5mm}\int_0^1 \big(\gamma^{\m\omega}(r) -\gamma^{\m\omega}(0)\big)\m d\gamma^\tau(r) = \int_s^t W_{s,u}\, du,
    \label{eq:intro_conditions1}
    \\
     \bE\bigg[\int_0^1 \big(\gamma^{\m\omega}(r) -\gamma^{\m\omega}(0)\big)^{\otimes 2} d\gamma^\tau(r)\bigg] =  \bE\bigg[\int_s^t W_{s,u}^{\otimes 2}\, du\bigg] = \frac{1}{2}\m h^2 I_d\m.\hspace{7.5mm}\label{eq:intro_conditions2}
\end{align}
These quantities correspond to terms in the Taylor expansions of the SDE (\ref{eq:strat SDE}) and its CDE approximation (\ref{eq:intro_CDE}). Provided $\gamma$ satisfies mild regularity conditions, ensuring that the integrals in (\ref{eq:intro_conditions1}) and expected integrals in (\ref{eq:intro_conditions2}) coincide allows the Taylor expansions (and hence solutions) of (\ref{eq:strat SDE}) and (\ref{eq:intro_CDE}) to be close in an $L^2(\P)$ sense. 
Moreover, we will see that the conditions (\ref{eq:intro_conditions1}) and (\ref{eq:intro_conditions2}) imposed on $\gamma$ are a specific instance of a more general framework, which we shall make rigorous in Theorem \ref{thm:global_strong_error}.
In terms of proof methodologies for the error analysis, it is worth noting that ours differs from previous works on splitting methods for SDEs \cite{Buckwar2022splitting, foster2019SLESplitting, iguchi2021splittingEM, leimkuhler2015ULD, misawa2000numerical, ninomiya2009weak, NinomiyaVictoir, tubikanec2022igbm}, which are either extensions of the Strang splitting \cite{strang1968splitting} or use the Baker-Campbell-Hausdorff formula for expanding the compositions of ODEs (see \cite{misawa2000numerical} for the latter).
That said, our approach does draw inspiration from ``Cubature on Wiener Space'' \cite{lyons2004cubature} where (deterministic) piecewise linear paths are used to weakly approximate SDEs. For some perspective, we present an informal version of our main result, Theorem \ref{thm:global_strong_error}, which describes our approach to high order splitting methods for commutative SDEs.

\begin{theorem}[Convergence of path-based splitting for SDEs (informal version)]
Given a fixed number of steps $N$, we will define a numerical solution $Y = \{Y_k\}_{0\m\leq\m k\m\leq N}$ for the SDE (\ref{eq:strat SDE}) over the finite time horizon $[0,T]$ as follows,
\begin{align*}
Y_{k+1} := \big(\text{Solution at time }r=1\text{ of CDE (\ref{eq:intro_CDE}) driven by }\gamma_k:[0,1]\rightarrow\R^{1+d}\m\big)\big(Y_k\big)\m,
\end{align*}
where each piecewise linear path $\gamma_k$ is constructed from $\big\{W_t : t\in\big[\frac{kT}{N},\frac{(k+1)T}{N}\big]\big\}$, is sufficiently regular (see Assumption \ref{assump:scaling}), and for some fixed $p\in \{\frac{m}{2}\}_{m \in \bN}\m$ satisfies \\[6pt] \noindent
1. the iterated integrals of $\gamma_k$ and $(t, W_t)$ with order less than $p - \frac{1}{2}$ coincide,\\[3pt] \noindent
2. the iterated integrals of $\gamma_k$ and $(t, W_t)$ with order $p$ match in expectation.\\[6pt]
Then, there exists a constant $C > 0$, such that for sufficiently small $h = \frac{T}{N}$, we have 
\begin{align}\label{eq:intro_error_estimate}
\bE\Big[\|Y_k - y_{kh}\|^2\Big]^{\frac{1}{2}} \leq C h^{p-\frac{1}{2}}\m .
\end{align}
for $k\in\{1,\cdots\hspace{-0.25mm}, N\}$. If $p=2$, and the SDE satisfies the commutativity condition (\ref{eq:comm_condition}), then the estimate (\ref{eq:intro_error_estimate}) holds under the assumption that each $\gamma_k$ is sufficiently regular and has coordinate processes $\{\gamma_k^{\m\omega, i}\}_{1\m\leq\m i\m\leq d}$ that are independent, symmetric and satisfy
\begin{align}
    \gamma^{\m\omega, i}_{k}(1) - \gamma^{\m\omega, i}_{k}(0)
    =
    W_{kh,(k+1)h}^{i}\m,\mm \gamma^\tau_{k}(1) - \gamma^\tau_{k}(0) = h,\label{eq:intro_conditions_basic}\\[3pt]
    \int_0^1 \big(\gamma_k^{\m\omega, i}(r) -\gamma_k^{\m\omega, i}(0)\big)\m d\gamma_k^\tau(r) 
    =
    \int_{kh}^{(k+1)h} W_{kh,u}^i \, du \m,\label{eq:intro_conditions3}
    \\[1pt]
    \bE\bigg[\int_0^1 \big(\gamma_k^{\m\omega, i}(r) -\gamma_k^{\m\omega, i}(0)\big)^{2} d\gamma_k^\tau(r)\bigg] = \frac{1}{2}\m h^2.\mm\label{eq:intro_conditions4}
\end{align}
\end{theorem}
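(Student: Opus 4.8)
The plan is to follow the paradigm of Milstein and Tretyakov: derive local (one-step) error bounds and promote them to a global rate. I would first record a \emph{fundamental lemma}: if the one-step map $y\mapsto\bar y(y)$ given by solving the CDE~\eqref{eq:intro_CDE} driven by $\gamma_k$ satisfies, conditionally on $\mathcal F_{kh}$ and uniformly over starting points, the local mean-square bound $\mathbb E\big[\|\bar y(y)-y_{(k+1)h}\|^2\mid y_{kh}=y\big]^{1/2}\le C(1+\|y\|)h^{p}$ and the local mean bound $\big\|\mathbb E[\bar y(y)-y_{(k+1)h}\mid y_{kh}=y]\big\|\le C(1+\|y\|)h^{p+1/2}$, together with uniform moment bounds $\sup_k\mathbb E[\|Y_k\|^2]<\infty$, then a telescoping-plus-discrete-Gronwall argument over the $N$ steps yields $\mathbb E[\|Y_k-y_{kh}\|^2]^{1/2}\le Ch^{p-1/2}$ (this is exactly the pairing of local orders $p_2=p$, $p_1=p+\frac12$ needed for global order $p-\frac12$). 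Crucially, $\gamma_k$ is built only from $\{W_t:t\in[kh,(k+1)h]\}$, hence is independent of $\mathcal F_{kh}$ and of $Y_k$; all conditional expectations therefore factor through the deterministic vector-field data evaluated at $Y_k$, which is what makes this machinery apply.

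\textbf{Local estimates by paired Taylor expansions.} The heart of the matter is a side-by-side expansion of the two one-step solutions started from a common point $y$: a Stratonovich--Taylor expansion of $y_{h}$ and a bounded-variation (Picard) expansion of $y_1^{\gamma}$. Both are sums indexed by words in $\{dt,\circ dW^1,\dots,\circ dW^d\}$ of an iterated derivative of $f,g$ evaluated at $y$ --- the \emph{same} coefficient in each --- times the corresponding iterated integral (of $(t,W)$ or of $\gamma_k$), plus a remainder. Assigning a word the order $\frac12(\#\,dW\text{'s})+(\#\,dt\text{'s})$, an order-$m$ iterated integral of $(t,W)$ has $L^2(\P)$-size $\Theta(h^{m})$, and under the scaling Assumption~\ref{assump:scaling} (which forces $\gamma^{\omega}$-increments $\sim h^{1/2}$ and $\gamma^{\tau}$-increments $\sim h$ over a fixed number of pieces) so does the matching iterated integral of $\gamma_k$. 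I would expand both solutions up to and including order $p$; the remainders are then of order $p+\frac12$, and here the regularity $f\in\mathcal C^2$, $g\in\mathcal C^3$ (with $g_i$ and $g_i'$ globally Lipschitz) is precisely what is needed to reach order $2$ while keeping the remainders controllable via Lipschitz/linear-growth estimates, Burkholder--Davis--Gundy, and the moment bounds. By hypothesis~1 all words of order $<p$ have identical iterated integrals for $(t,W)$ and for $\gamma_k$, so those terms cancel exactly in $y_h-y_1^{\gamma}$; what remains is $\sum_{\mathrm{ord}(\alpha)=p}V_\alpha(y)\big(I_\alpha^{W}-I_\alpha^{\gamma}\big)+(\text{remainders})$. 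Since $V_\alpha(y)$ is deterministic and $I_\alpha^{W}-I_\alpha^{\gamma}$ is independent of $y$ with mean zero by hypothesis~2, taking conditional expectations annihilates the order-$p$ block and leaves the local mean error at $O(h^{p+1/2})$, while the triangle inequality gives the local mean-square error $O(h^{p})$. Feeding these two estimates into the fundamental lemma produces~\eqref{eq:intro_error_estimate}.

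\textbf{Moment and stability bounds.} In parallel I would establish $\sup_N\sup_{0\le k\le N}\mathbb E[\|Y_k\|^{2q}]<\infty$ for the needed $q$: the one-step expansion gives $\|Y_{k+1}\|\le\|Y_k\|(1+Ch)+(\text{mean-zero term of }L^2\text{-size }Ch^{1/2}\|Y_k\|)+\text{h.o.t.}$, and squaring, conditioning (so the mean-zero cross-term drops), and iterating a discrete Gronwall inequality gives the bound. The same expansion applied to the difference of two solutions furnishes the ``propagation of error'' estimate that closes the Gronwall step in the fundamental lemma. These are standard but must be carried out respecting the conditional-independence structure above.

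\textbf{The commutative case $p=2$.} It remains to show that under~\eqref{eq:comm_condition}, together with independence and symmetry of the $\gamma_k^{\omega,i}$, the explicit conditions~\eqref{eq:intro_conditions_basic}--\eqref{eq:intro_conditions4} already imply hypotheses~1 and~2 for $p=2$, i.e.\ that all iterated integrals of order $\le\frac32$ coincide for $(t,W)$ and $\gamma_k$ and those of order $2$ match in expectation; I expect this to be the main obstacle. The mechanism is that commutativity symmetrises the coefficients of the pure-$dW$ multi-integrals --- from $g_i'g_j=g_j'g_i$ and its iterates --- so that, by the shuffle identities for iterated integrals, each such block collapses to a polynomial in the increments alone (e.g.\ $\sum_{i,j}g_j'g_i\int_s^t\!\int_s^u \circ dW^i\circ dW^j=\frac12\sum_{i,j}g_j'g_i\,W_{s,t}^iW_{s,t}^j$, and likewise at depths $3$ and $4$). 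Such blocks are thus matched exactly by~\eqref{eq:intro_conditions_basic}. The only order-$\le\frac32$ words that genuinely involve $dt$ are the two orderings of $\{\circ dW^i,dt\}$, whose combination is pinned down by~\eqref{eq:intro_conditions3} via integration by parts; and the only order-$2$ words not already covered are $\{dt,dt\}$ (deterministic) and the three orderings of $\{\circ dW^i,\circ dW^j,dt\}$, where commutativity again symmetrises the $dW\,dW$ part and, reducing the positional variants by shuffle relations to $\int(\,\cdot\,)^2\,d(\,\cdot\,)$-type quantities, independence and symmetry of the $\gamma_k^{\omega,i}$ force their expectations to equal exactly what~\eqref{eq:intro_conditions4} prescribes (the $i\ne j$ cross-terms having zero mean on both sides). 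With hypotheses~1 and~2 verified, the general result gives the $O(h^{3/2})$ rate. The delicate points are the shuffle-algebra reduction of the order-$2$ mixed words, checking that nothing new appears at order $\frac32$, and confirming that $f\in\mathcal C^2$, $g\in\mathcal C^3$ genuinely suffice for every remainder at this order.
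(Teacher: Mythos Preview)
Your proposal is correct and follows essentially the same architecture as the paper (Theorems~\ref{thm:local errors}, \ref{thm:global_strong_error}, \ref{thm:comm_main_result}): paired Stratonovich/CDE Taylor expansions, local strong and mean errors of orders $(p,\,p+\tfrac12)$, then the Milstein--Tretyakov fundamental lemma; and your treatment of the pure-$dW$ blocks in the commutative case via symmetrisation of coefficients and collapse to increment polynomials is precisely Theorem~\ref{thm:g_symmetries}.

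One inaccuracy to flag in the commutative step: for the order-$2$ mixed words $\{\circ dW^i,\circ dW^j,dt\}$ you assert that commutativity again symmetrises the $(i,j)$ coefficients in all three positional variants. This holds for the outer orderings $\omega\omega\tau$ and $\tau\omega\omega$ (coefficients $(f'g_j)'g_i$ and $(g_j'g_i)'f$), but for the middle ordering $\omega\tau\omega$ the coefficient is $(g_j'f)'g_i = g_j''(f,g_i) + g_j'f'g_i$, which $g_i'g_j=g_j'g_i$ alone does \emph{not} symmetrise. The paper in fact never invokes commutativity for the mixed words: it treats them purely via the symmetric/antisymmetric decomposition of iterated integrals (Theorem~\ref{thm:symmetric_antisymmetric}, with index $0$ allowed for time), showing that matching $I_0,I_i,I_{i0}$ exactly and $\bE[I_{ii0}]$ already forces $\bE[I_{i0i}]$ and $\bE[I_{0ii}]$ to match (through $I_{[i,i]}=0$ and bracket antisymmetry), while the $i\neq j$ cases have zero expectation on both sides by independence and symmetry of the coordinate processes. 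Since you already name shuffle relations and independence/symmetry as your tools, your argument goes through once you drop the extraneous commutativity claim for the mixed words.
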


The paper is outlined as follows. In Section \ref{sect:prelim_results}, we impose regularity conditions on the path $\gamma$ and establish a (fourth) moment bound for the solution of the CDE (\ref{eq:intro_CDE}).
In Section \ref{sect:taylor_exp}, we will present Taylor expansions for both the SDE (\ref{eq:strat SDE}) and CDE (\ref{eq:intro_CDE}).
Since the error analysis of stochastic Taylor approximations is well known, our focus is to obtain $L^2(\P)$ estimates for the remainder terms in the CDE Taylor expansion.
We end Section \ref{sect:main_result} with our main result, Theorem \ref{thm:global_strong_error}, which establishes convergence rates for SDE splitting methods corresponding to the CDE (\ref{eq:intro_CDE}) driven by a path $\gamma$.
To establish  $3/2$ strong convergence rates in the setting where the SDE satisfies the commutativity condition (\ref{eq:comm_condition}), we simplify certain terms in the Taylor expansions of (\ref{eq:strat SDE}) and (\ref{eq:intro_CDE}). This is the focus of Section \ref{sect:comm_sdes}, with technical details in Appendix \ref{append:integral_cancel}.\smallbreak

In Section \ref{sect:paths}, we provide several examples of piecewise linear paths that correspond to high order splitting methods. Two of the paths will be constructed using a recently developed approximation \cite[Theorem 5.1.2]{foster2020thesis} for the stochastic integral $\int_s^t W_{s,u}^2\m du$.
Building upon the approach of \cite{foster2020OptimalPolynomial}, this integral estimator can be obtained as a certain conditional expectation of the iterated integral and is thus optimal in an $L^2(\P)$ sense.
However, unlike in \cite{foster2020OptimalPolynomial}, we shall generate an additional Rademacher random variable to improve the approximation to aid in the construction of the piecewise linear paths.
To keep this article self-contained, this integral estimator is derived in Appendix \ref{append:integral_approx}.
We also note that \cite{foster2020thesis} is the doctoral thesis of the first author and \cite[Chapter 5]{foster2020thesis} has several ``rough path inspired'' ideas that are refined and analysed in this article.\smallbreak

In Section \ref{sect:experiments}, we test the proposed splitting methods on some well-known SDEs, including the Cox-Ingersoll-Ross \cite{alfonsi2005cir, cir1985} and stochastic FitzHugh-Nagumo \cite{Buckwar2022splitting} models. Due to the analytic tractability of these SDEs, our high order splitting schemes will produce ODEs that can be either solved exactly or further split into ``solvable'' ODEs. 
We also discuss the setting of additive noise SDEs, where the drift vector field may not give analytically tractable ODEs. For these problems, we propose applying a certain second order Runge-Kutta method (Ralston's method) to the ``non-diffusion'' ODE. Furthermore, we show that the Taylor expansion of the resulting stochastic Ralston method contains the high order terms needed to obtain order 3/2 strong convergence.\smallbreak

In the specific application to underdamped Langevin dynamics \cite{leimkuhler2015ULD}, we briefly discuss
the choice of splitting path and Runge-Kutta method which can lead to a third order strong convergence rate to the SDE solution (see \cite{foster2021shifted} for further details).
Finally, we demonstrate the improved accuracy of the stochastic Ralston method when compared to the SRA1 scheme in \cite{Rossler2010SRK}, for simulating a simple anharmonic oscillator.
Moreover, as both methods require two drift evaluations per step, we would expect stochastic Ralston to be state-of-the-art for additive noise SDEs with expensive drifts.
At the same time, we also show that a simple splitting-based adjustment can improve the accuracy of the standard Euler-Maruyama method for SDEs with additive noise.\smallbreak

\subsection{Notation}
In this section, we summarise some of the notation in the paper.
Given vectors $a\in\R^n$ and $b\in \R^m$, we shall denote their tensor product $a\otimes b\in\R^{nm}$ by $a\otimes b := \{a_i\m b_j\}_{i\m =\m 1,\m j\m =\m 1}^{n,\hspace{3.65mm} m}$. We define iterated integrals of $W$ and $\gamma : [0,1]\rightarrow \R^n$ as
\begin{align}
\label{eq:I_alpha(F)}
    I_{\alpha}(F)
    &:= 
    \int_0^h \int_0^{r_n} \cdots \int_0^{r_2} F(y_{r_1})\, dB^{\alpha_1}_{r_1}\cdots dB^{\alpha_n}_{r_n},
    \\
\label{eq:I^gamma_alpha(F)}
    I^\gamma_{\alpha}(F)
    &:= 
    \int_0^1 \int_0^{r_n} \cdots \int_0^{r_2} F(y^\gamma_{r_1})\, d\gamma^{\alpha_1}(r_1)\m \cdots\m d\gamma^{\alpha_n}(r_n)\m,
\end{align}
where $y$ and $y^\gamma$ are the solutions of the SDE (\ref{eq:strat SDE}) and its CDE approximation (\ref{eq:intro_CDE}) (constructed using a step size $h>0$), $F : \R^e\rightarrow \R^e$, $\alpha = (\alpha_1, \cdots, \alpha_n) \in \{\tau, \omega\}^n$ denotes a multi-index, $dB^\tau_r = dr$ and $dB^{\m\omega}_r =  \otimes \circ dW_r\m$.
%If $\gamma(r) = (\gamma^\tau(r), \gamma^{\m\omega}(r))^\top$, then we write $d\gamma(r)^\tau = d\gamma^\tau(r)$ and $d\gamma(r)^{\m\omega} = \otimes\, d\gamma^{\m\omega}(r)$.
The CDE approximation will be defined with the same initial condition as the SDE throughout (that is, $y_0^\gamma := y_0$).
We denote the set of multi-indices by $\mathcal{A} = \cup_{n\geq 0}\{\tau, \omega\}^n$. We also define the integrals,
\begin{align}
\label{Def:J integrals}
    J_{\alpha}(F) := I_{\alpha}(F) - F(y_0)I_{\alpha}(1)\m,\hspace{5mm} J^\gamma_{\alpha}(F) := I^\gamma_{\alpha}(F) - F(y^\gamma_0)I^\gamma_{\alpha}(1)\m,
\end{align}
where we understand \(I_{\alpha}(1)\) and \(I^\gamma_{\alpha}(1)\) as defined in (\ref{eq:I_alpha(F)}) and (\ref{eq:I^gamma_alpha(F)}), but with \(F(y)\) replaced by the scalar \(1\).
For a given multi-index $\alpha =  (\alpha_i)_{1\m \leq\m i\m\leq\m n}\m$, we will define its order by $\textrm{ord}(\alpha):= |\alpha|_\tau + \frac{1}{2}|\alpha|_\omega\m$, where $|\alpha|_\tau := \sum_{i=1}^n \mathbf{1}_{\alpha_i\m =\m \tau}$ and $|\alpha|_\omega := \sum_{i=1}^n \mathbf{1}_{\alpha_i\m =\m \omega}\m$.\smallbreak

Given normed vector spaces $U$ and $V$, $\mathcal{C}_{\Lip}^p(U,V)$ will be the subspace of $\mathcal{C}^p(U,V)$ containing globally Lipschitz continuous functions with their $p$ derivatives globally Lipschitz continuous. We write the Lipschitz constant of a function $F$ as $\|F\|_{\Lip\m\text{-}1}$.
The above notation shall be employed in the study of Taylor expansions in Section \ref{sect:taylor_exp}.\smallbreak

Throughout, $\|\cdot\|$ will denote the standard Euclidean norm on $\R^n$ and $L^p(\R^n)$ is the space of $\R^n$-valued random variables with finite $p\m$-th moments (i.e.~$\bE\big[\|X\|^p\big] < \infty$).
Given a continuous path $\gamma : [0,1]\rightarrow \R^n$, we will denote its length using the notation,
\begin{align*}
    \|\gamma\|_{1\text{-var},[0,1]}\m 
    = 
    \int_0^1 |\m d\gamma(r)|\m 
    := 
    \sup_{\substack{0\m =\m r_0\m <\m r_1\m <\m \cdots\m <\m r_N\m =\m 1,\\[3pt] N\m\geq\m 1.}}\bigg(\sum_{i=0}^{N-1} \|\gamma(r_{i+1}) - \gamma(r_i)\|\bigg)\m.
\end{align*}
When defining numerical methods, we shall often use $W_k$ as shorthand for $W_{t_k\m,t_{k+1}}$,
(and similarly $H_k$ and $n_k$ instead of $H_{t_k\m,t_{k+1}}$ and $n_{t_k\m,t_{k+1}}$).\smallbreak

\section{Main assumption and preliminary results}\label{sect:prelim_results}
Before we prove the strong convergence of the CDE \eqref{eq:intro_CDE}, we will first establish a moment bound for its solution.
As discussed previously, this requires us to make certain assumptions on the path $\gamma$.
Our main assumption (given below) ensures the path $\gamma$ scales like Brownian motion.

\begin{assump}[Brownian-like scaling]
\label{assump:scaling}
     Let \(\gamma=(\gamma^\tau ,  \gamma^{\m\omega})^\top: [0,1] \to \R^{1+d}\) be a piecewise linear path with $m \in \bN$ components that have, almost surely, finite length. For $i\geq 0$, we denote the increment of the $i$-th piece of $\gamma$ by $\gamma_{r_{i}, r_{i+1}}$ and assume that\smallbreak
     \begin{enumerate}
     \item $\gamma^\tau_{r_{i}, r_{i+1}}\m$, the increment in the time component of $\gamma$, is deterministic.\smallbreak
    
    \item $\gamma^\tau_{r_{i}, r_{i+1}}$ 
        scales with the step size $h$ and the increment in the space component, $\gamma^{\m\omega}_{r_{i}, r_{i+1}}$, has finite even moments scaling with $h$. Concretely, we have
        \begin{align*}
            \gamma^\tau_{r_{i}, r_{i+1}} = O(h), \hspace{2.5mm}  \textrm{and} \quad \bE\big[|(\gamma^{\m\omega}_{r_{i}, r_{i+1}})_j|^{2k}\big] = O(h^k),
        \end{align*}
        for every $j \in \{1,\cdots, d\}$.\smallbreak
     \item When a CDE driven by $\gamma$ is considered, $y_0^\gamma$ and $\gamma$ are assumed independent.
     \end{enumerate}
\end{assump}

\begin{remark}[Comment on Assumption \ref{assump:scaling}]
    We impose that $\gamma^\tau$  is deterministic for convenience and, inspecting the proof, one may be able to lift this constraint. Moreover, we expect our methodology can accommodate for randomised algorithms (see \cite{biswas2022explicit, OptimalMidpointMCMC, KruseWu2019randomizedSPDE, KruseWu2019RandomisedMilstein, MidpointMCMC} for examples of SDE solvers with a randomised time component).
\end{remark}

We now present the main result of this section -- a moment bound for the CDE,
which will be used to control remainder terms of the Taylor expansion discussed later. Following the approach of \cite[Theorem 3.7]{friz2010multidimensional}, we obtain our main result, Theorem \ref{thm: moment bound on y_t^gamma}.
\begin{theorem}[Fourth moment bound for CDEs]
\label{thm: moment bound on y_t^gamma}
    Let $\gamma$ satisfy Assumption \ref{assump:scaling} and let $y^\gamma$ denote the solution to \eqref{eq:intro_CDE} with \(y^\gamma_0 \in L^4(\R^e)\). Suppose that $f$ and $g$ satisfy 
    \begin{align}
    \label{eq: linear growth in time and space}
        \|f(y)\| \leq C(1+\|y\|), \quad \textrm{and}\quad \|g(y)\| \leq C(1 + \|y\|),
    \end{align}
    with $\bE\big[\exp\big(16\m C\int_0^1 |d\gamma(u)|\big)\big] < \infty$.
    Then there exists a positive constant $\widetilde{C} >0$, depending only on the path $\gamma$ and growth constant $C$ in (\ref{eq: linear growth in time and space}), such that for $r\in[0,1]$,
    \begin{align}
    \label{eq: 4th moment bound}
        \bE\big[\m\|y_r^\gamma - y_0^\gamma \|^4\big]
        \leq \widetilde{C}\m h^2 \big(1 + \bE\big[\|y_0^\gamma\|^4\big]\big)\m.
    \end{align}
\end{theorem}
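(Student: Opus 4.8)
The plan is to bound the solution increment $y_r^\gamma - y_0^\gamma$ pathwise by a quantity that grows with the driving path's length, and then take expectations using the exponential-integrability hypothesis on $\int_0^1 |d\gamma(u)|$. First I would write the CDE in integral form,
\begin{equation*}
    y_r^\gamma - y_0^\gamma = \int_0^r f(y_u^\gamma)\, d\gamma^\tau(u) + \int_0^r g(y_u^\gamma)\, d\gamma^{\m\omega}(u)\m,
\end{equation*}
and, using the linear growth bounds \eqref{eq: linear growth in time and space}, estimate
\begin{equation*}
    \|y_r^\gamma - y_0^\gamma\| \leq C\int_0^r \big(1 + \|y_u^\gamma\|\big)\, |d\gamma(u)| \leq C\big(1 + \|y_0^\gamma\|\big)\int_0^1 |d\gamma(u)| + C\int_0^r \|y_u^\gamma - y_0^\gamma\|\, |d\gamma(u)|\m.
\end{equation*}
Applying Gronwall's inequality (in the Stieltjes/$1$-variation form, valid since $\gamma$ has finite length a.s.) to the function $u\mapsto \|y_u^\gamma - y_0^\gamma\|$ against the measure $|d\gamma(u)|$ yields the pathwise bound
\begin{equation*}
    \|y_r^\gamma - y_0^\gamma\| \leq C\big(1 + \|y_0^\gamma\|\big)\, \Big(\textstyle\int_0^1 |d\gamma(u)|\Big)\exp\!\Big(C\textstyle\int_0^1 |d\gamma(u)|\Big)\m.
\end{equation*}
This is essentially the estimate behind \cite[Theorem 3.7]{friz2010multidimensional}, so I would cite that for the deterministic ODE input and focus on what is new here, namely the stochastic bookkeeping.

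Next I would raise this to the fourth power and take expectations. Writing $L := \int_0^1 |d\gamma(u)|$, we get
\begin{equation*}
    \bE\big[\|y_r^\gamma - y_0^\gamma\|^4\big] \leq C^4\, \bE\Big[\big(1+\|y_0^\gamma\|\big)^4\, L^4 \exp\!\big(4CL\big)\Big]\m.
\end{equation*}
Since $y_0^\gamma = y_0$ is independent of the path $\gamma$ (the path is built from Brownian increments over $[0,h]$ while $y_0$ is the initial condition — I would make this independence assumption explicit if it is not already in force), the expectation factorises as $\bE[(1+\|y_0\|)^4]\cdot \bE[L^4 e^{4CL}]$. Then I would absorb the polynomial factor $L^4$ into the exponential using $x^4 e^{4Cx} \leq C' e^{16Cx}$ for all $x\geq 0$ (for a suitable constant $C'$ depending on $C$ — this is where the constant $16C$ in the hypothesis $\bE[\exp(16C\int_0^1|d\gamma|)] < \infty$ is used), so that $\bE[L^4 e^{4CL}] \leq C'\,\bE[e^{16CL}] < \infty$ by assumption. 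The factor $(1+\|y_0\|)^4 \leq 8(1 + \|y_0\|^4)$ gives the $(1 + \bE[\|y_0\|^4])$ shape on the right-hand side.

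The last point is the $h^2$ scaling, which is the part genuinely using Assumption \ref{assump:scaling}. The crude bound above has no explicit $h$; the improvement comes from noting that each linear piece of $\gamma$ contributes an increment of size $O(h)$ (for $\gamma^\tau$) or with $2k$-th moment $O(h^k)$ (for $\gamma^{\m\omega}$), and there are finitely many pieces (a number independent of $h$). So rather than the global Gronwall bound, I would redo the estimate piece by piece: on the $i$-th piece, $\|y^\gamma_{r_{i+1}} - y^\gamma_{r_i}\|$ is controlled by $C(1 + \sup_u\|y_u^\gamma\|)(\|\gamma^\tau_{r_i,r_{i+1}}\| + \|\gamma^{\m\omega}_{r_i,r_{i+1}}\|)$, and summing telescopically over the $O(1)$ pieces and taking fourth moments, the dominant contributions are $\bE[|\gamma^\tau_{r_i,r_{i+1}}|^4] = O(h^4)$ and $\bE[|\gamma^{\m\omega}_{r_i,r_{i+1}}|^4] = O(h^2)$; the latter is the bottleneck and delivers exactly the $h^2$ in \eqref{eq: 4th moment bound}. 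The main obstacle, and the step requiring the most care, is carrying the supremum $\sup_{u\in[0,1]}\|y_u^\gamma\|$ through this piecewise argument while keeping a clean moment bound — concretely, one needs a preliminary fourth-moment bound on $\sup_u \|y_u^\gamma\|$ itself (not just on the increment), which is obtained from the same Gronwall argument applied to $\|y_u^\gamma\|$ directly, together with the exponential-integrability hypothesis; once that is in hand, Hölder's inequality splits the product $\bE[\sup_u\|y_u^\gamma\|^4 \cdot (\text{path increments})^4]$ into the two controlled factors and the $h^2$ rate follows.
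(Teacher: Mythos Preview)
Your pathwise Gronwall bound and citation of \cite[Theorem 3.7]{friz2010multidimensional} match the paper's approach exactly, as does the use of independence between $y_0^\gamma$ and $\gamma$. The departure comes in how you extract the $h^2$ scaling, and here you take an unnecessary detour.

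You absorb the polynomial factor $L^4$ into the exponential via $x^4 e^{4Cx}\leq C' e^{16Cx}$, which indeed loses all $h$-dependence, and then you propose a separate piecewise argument (with a supremum bound on $\|y_u^\gamma\|$ and H\"older) to recover the $h^2$. This second stage is workable but fiddly. The paper instead keeps the $L^4$ factor and applies Cauchy--Schwarz directly:
\[
\bE\Big[(1+\|y_0\|)^4 L^4 e^{8CL}\Big]
\leq (1+\bE[\|y_0\|^4])\,\bE\big[e^{16CL}\big]^{1/2}\,\bE\big[L^8\big]^{1/2}.
\]
The exponential factor is bounded by hypothesis, and the $h^2$ falls out immediately from $\bE[L^8]^{1/2}$: since $\gamma$ has $m$ linear pieces, $L=\sum_{i}\|\gamma_{r_i,r_{i+1}}\|$, so Jensen gives $\bE[L^8]\leq m^7\sum_i \bE[\|\gamma_{r_i,r_{i+1}}\|^8]=O(h^4)$ by Assumption~\ref{assump:scaling}. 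No supremum bound, no second Gronwall pass, no piecewise telescoping is needed. Your instinct that Assumption~\ref{assump:scaling} is what delivers the $h^2$ is right; it just enters through the moments of $L$ itself rather than through a separate per-piece estimate.
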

\begin{proof}
    Let \(G:\R^e \to \R^{e \times (d+1)}\) have first column given by \(f: \R^e \to \R^e\) and the rest of the matrix given by \(g: \R^e \to \R^{e \times d}\). Then the growth assumption \eqref{eq: linear growth in time and space} implies that \(\|G(y)\| \leq C(1+ \|y\|)\). Thus, by direct application of \cite[Theorem 3.7]{friz2010multidimensional}, we have
    \begin{align*}
        \|y^\gamma_r - y^\gamma_0\| 
        \leq
        C (1 + \|y^\gamma_0\|) \exp\left( 2\m C \int_0^r |d\gamma(u)| \right) \int_0^r |d \gamma(u)|\m,
    \end{align*}
    for $r\in [0,1]$. Since $y_0^\gamma$ is independent of $\gamma$, we can estimate the fourth moment as
    \begin{align*}
        \bE\big[\|y^\gamma_r - y^\gamma_0\|^4\big]
        &\leq
        C^4\m \bE \left[\big(1 + \|y^\gamma_0\|\big)^4 \exp\left( 8\m C \int_0^r |d\gamma(u)| \right) \left(\int_0^r |d \gamma(u)|\right)^4\right]\\
        &\leq
        C^4\m\big(1 + \bE\big[\|y_0^\gamma\|^4\big]\big)\bE\bigg[ \exp\bigg( 16\m C \int_0^r |d\gamma(u)| \bigg)\bigg]^{\frac{1}{2}}\bE\Bigg[\bigg(\int_0^r |d \gamma(u)|\bigg)^8\Bigg]^\frac{1}{2}\hspace{-1mm},     
    \end{align*}
    by the Cauchy-Schwarz inequality. We now assume for a contradiction that there exists $0\m =\m s_0\m <\m s_1\m <\m \cdots\m <\m s_M\m =\m 1$ such that $\sum_{i=0}^{m-1}\big\|\gamma_{r_i\m, r_{i+1}}\big\| < \sum_{j=0}^{M-1}\big\|\gamma_{s_j\m, s_{j+1}}\big\|$. As each piece $\{\gamma(t): t\in [r_i\m, r_{i+1}]\}$ is linear, adding points does not change the sum:
	\begin{align*}
		\sum_{i=0}^{m-1}\big\|\gamma_{r_i\m, r_{i+1}}\big\| = \sum_{k=0}^{N-1}\big\|\gamma_{t_k\m, t_{k+1}}\big\|,\hspace{2.5mm} \text{where}\hspace{2.5mm} \{t_k\} := \{r_j\}\cup\{s_i\}.
	\end{align*}    
    We also note that each increment $\gamma_{s_j\m, s_{j+1}}$ can be expressed as a sum of increments from $\{\gamma_{t_k\m, t_{k+1}} : s_j \leq t_k < s_{j+1}\}$. Therefore, by the triangle inequality, it follows that $\sum_{j=0}^{M-1}\big\|\gamma_{s_j\m, s_{j+1}}\big\|\leq \sum_{k=0}^{N-1}\big\|\gamma_{t_k\m, t_{k+1}}\big\| \implies\sum_{j=0}^{M-1}\big\|\gamma_{s_j\m, s_{j+1}}\big\|\leq \sum_{i=0}^{m-1}\big\|\gamma_{r_i\m, r_{i+1}}\big\|$.
    From this contradiction, we have $\|\gamma\|_{1\text{-var},[0,1]}\m 
    = \int_0^1 |d \gamma(u)| = \sum_{i=0}^{m-1}\big\|\gamma_{r_i\m, r_{i+1}}\big\|$ and so
    \begin{align*}
    \bE\Bigg[\bigg(\int_0^r |d \gamma(u)|\bigg)^8\m\Bigg] \leq \bE\Bigg[\bigg(\int_0^1 |d \gamma(u)|\bigg)^8\m\Bigg] & = \bE\Bigg[\bigg(\sum_{i=0}^{m-1}\big\|\gamma_{r_i\m, r_{i+1}}\big\|\bigg)^8\m\Bigg]\\
    & \leq m^7\sum_{i=0}^{m-1}\bE\Big[\|\gamma_{r_i\m, r_{i+1}}\big\|^8\Big] = O(h^4),
    \end{align*}
 using Jensen's inequality and Assumption \ref{assump:scaling}. Since $\bE\big[\exp\big(16\m C\int_0^1 |d\gamma(u)|\big)\big] < \infty$, it now follows that there exists $\widetilde{C} > 0$, not depending on $y_0^\gamma$, such that for $r\in [0,1]$,
    \begin{align*}
    	\bE\big[\|y^\gamma_r - y^\gamma_0\|^4\big] & \leq \widetilde{C}\m h^2 \big(1 + \bE\big[\|y_0^\gamma\|^4\big]\big).
    \end{align*}
\end{proof}\smallbreak

\section{Taylor expansions and error analysis}
\label{sect:taylor_exp}

We consider the Taylor expansions of both the Stratonovich SDE \eqref{eq:strat SDE} and the CDE \eqref{eq:intro_CDE} driven by a splitting path $\gamma$. By matching the lower order terms in the Taylor expansions and showing that the remainder terms are higher order, we can bound local errors for our splitting schemes. We then apply Milstein and Tretyakov's framework for mean-square error analysis \cite{milstein2021physics} to obtain a global strong convergence rate -- which is our main result in Theorem \ref{thm:global_strong_error}.\smallbreak

\subsection{Stratonovich Taylor expansion}

Letting $y$ denote the solution to \eqref{eq:strat SDE}, we have the usual chain rule (see \cite[Theorem 5.6.1]{kloeden1992numerical} with $\mathcal{A} = \{\emptyset\}$) for $F \in \mathcal{C}^1(\R^e)$,
\begin{align}
\label{eq:chain_rule}
    F(y_r) = F(y_0) + \int_0^r F^{\prime}(y_s) \circ dy_s\m.
\end{align}
By expanding ``$dy_s$'' and iteratively applying (\ref{eq:chain_rule}), we obtain the Taylor expansion.

\begin{proposition}[Stochastic Taylor expansion of the Stratonovich SDE (\ref{eq:strat SDE}) { {\cite[Proposition 1.1]{bayer2006geometry}, \cite[Theorem 5.6.1]{kloeden1992numerical}}}]
\label{prop:strat taylor expansion}
Let $p \in \{\frac{k}{2}\}_{k \in \bN}$, $f \in \mathcal{C}_{\Lip}^{\lceil p -1 \rceil }(\R^e, \R^e)$ and $g \in \mathcal{C}_{\Lip}^{2p-1}(\R^e, \R^{e\times d})$. The Stratonovich Taylor expansion of \eqref{eq:strat SDE}, up to order $p$, is
    \begin{align}\label{eq:strat_taylor_exp}
        y_h = y_0 
        +
        \sum_{\substack{\alpha\m  \in \mathcal{A} \,, \\[2pt] \textrm{ord}(\alpha) \leq \,p}}
        V(\alpha)(y_0) I_\alpha(1) + R_p(h, y_0) ,
    \end{align}
    where, we recall the definition of $\textrm{ord}(\alpha):= |\alpha|_\tau + \frac{1}{2}|\alpha|_\omega\m$ after equation (\ref{Def:J integrals}), and
    \begin{align}
    \label{def:R_p SDE}
        R_p(h, y_0) 
        :=
        \sum_{\substack{\alpha\m \in \mathcal{A} \,,\\[2pt] \textrm{ord}(\alpha) = p}} J_\alpha(V(\alpha)) \m ,
    \end{align}
    with the vector field derivatives $V(\alpha): \R^e \to L((\R^d)^{\otimes |\alpha|_\omega}, \R^e)$ defined for multi-indices recursively by $V(\tau)(y) := f(y)$, $V(\omega)(y) := g(y)$ and
    \begin{align*}
        V(l \beta)(y) = V(\beta)^\prime V(l)(y) ,
    \end{align*}
    where $l \in \{\tau, \omega\}$ and $l \beta := (l, \beta_1, \cdots, \beta_n)$ denotes concatenation.
    Moreover, we have
    \begin{align}
    \label{eq: order of BM remainder}
        \bE\big[\|R_p(h, y_0)\|^2\big]^{1/2} = O(h^{p+\frac{1}{2}}) \m .
    \end{align}
\end{proposition}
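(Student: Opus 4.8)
The expansion (\ref{eq:strat_taylor_exp})--(\ref{def:R_p SDE}) is the classical Stratonovich--Taylor expansion, so the plan is to recall how it is generated --- which also explains the smoothness hypotheses --- and then to concentrate on the remainder bound (\ref{eq: order of BM remainder}), which is the real content. Starting from $y_h = y_0 + \int_0^h f(y_s)\,ds + \int_0^h g(y_s)\circ dW_s = y_0 + \sum_{l\in\{\tau,\omega\}} I_{(l)}(V(l))$, one repeatedly applies the chain rule (\ref{eq:chain_rule}) to each integrand $V(\alpha)(y_{\cdot})$ still sitting inside an iterated integral. Together with the recursion $V(l\beta) = V(\beta)'V(l)$, equation (\ref{eq:chain_rule}) gives the one-step identity $I_\alpha(V(\alpha)) = V(\alpha)(y_0)\,I_\alpha(1) + \sum_{l} I_{l\alpha}(V(l\alpha))$, which freezes the term $V(\alpha)(y_0)I_\alpha(1)$ and spawns children $l\alpha$ of order $\mathrm{ord}(\alpha) + \mathrm{ord}(l)$. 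Stopping each branch once its order reaches $p$ and collecting the leftover integral terms (writing the contributions of order-$p$ ancestors in the $J$-form $\sum_l I_{l\alpha}(V(l\alpha)) = J_\alpha(V(\alpha))$) produces (\ref{eq:strat_taylor_exp})--(\ref{def:R_p SDE}); see \cite{bayer2006geometry, kloeden1992numerical}. The key point for regularity is that the $J$-form only differentiates $V(\alpha)$ for $\mathrm{ord}(\alpha)\le p$: this, together with the need for each frozen $V(\alpha)$ with $\mathrm{ord}(\alpha)<p$ to be $\mathcal{C}^1$ so that (\ref{eq:chain_rule}) applies, is exactly what $f\in\mathcal{C}_{\Lip}^{\lceil p-1\rceil}$ and $g\in\mathcal{C}_{\Lip}^{2p-1}$ supply, and in particular every $V(\alpha)$ with $\mathrm{ord}(\alpha)=p$ is then globally Lipschitz.

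For (\ref{eq: order of BM remainder}) I would isolate two ingredients. First, a moment bound for (\ref{eq:strat SDE}): since $f$ and $g$ are Lipschitz they have linear growth, so Burkholder--Davis--Gundy together with Gr\"onwall's inequality gives $\bE\big[\sup_{0\le s\le h}\|y_s\|^2\big]\le C\big(1+\bE[\|y_0\|^2]\big)$, and then estimating $y_t - y_0$ directly from (\ref{eq:strat SDE}) (after converting its Stratonovich integral to It\^o form, whose correction is a linear-growth drift) yields $\sup_{0\le t\le h}\bE[\|y_t-y_0\|^2]^{1/2} = O(h^{1/2})$; combined with the Lipschitz bound on $V(\alpha)$ this gives $\sup_{0\le t\le h}\bE[\|V(\alpha)(y_t)-V(\alpha)(y_0)\|^2]^{1/2} = O(h^{1/2})$ whenever $\mathrm{ord}(\alpha)=p$. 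Second, an estimate for iterated integrals: for any multi-index $\beta = (\beta_1,\dots,\beta_n)$ and any adapted, square-integrable $X=\{X_t\}_{t\in[0,h]}$,
\begin{equation*}
\bE\bigg[\bigg\|\int_0^h\!\int_0^{r_1}\!\cdots\!\int_0^{r_{n-1}} X_{r_n}\,dB^{\beta_1}_{r_n}\cdots dB^{\beta_n}_{r_1}\bigg\|^2\bigg]^{1/2} \le C_\beta\, h^{\mathrm{ord}(\beta)}\sup_{0\le t\le h}\bE\big[\|X_t\|^2\big]^{1/2},
\end{equation*}
which I would prove by induction on $n$, peeling off the outermost differential: if it is $dr_1$, Cauchy--Schwarz extracts a factor $h = h^{\mathrm{ord}(\tau)}$; if it is $\circ dW^j_{r_1}$, rewrite the outer Stratonovich integral in It\^o form and apply the It\^o isometry / BDG to extract $h^{1/2} = h^{\mathrm{ord}(\omega)}$, noting that the accompanying It\^o correction is again an iterated integral, of a strictly shorter multi-index of the same order (it replaces a $\circ dW^j\circ dW^j$ pair by a single $dr$, which does not change $\mathrm{ord}$), hence covered by the induction hypothesis; in both cases what remains is an iterated integral with multi-index $(\beta_1,\dots,\beta_{n-1})$ of an adapted, square-integrable integrand, the SDE moment bound controlling any vector fields inserted en route. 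Applying these two facts term by term, each summand of $R_p$ is, up to a constant, either an iterated integral over a multi-index of order $\ge p+\frac{1}{2}$ with a bounded-in-$L^2$ integrand, or an iterated integral over a multi-index of order $p$ whose innermost integrand is a difference $V(\alpha)(y_{\cdot})-V(\alpha)(y_0)$ of size $O(h^{1/2})$ in $L^2$; in both cases one gets $L^2$-norm $O(h^{p+\frac{1}{2}})$, and summing the finitely many terms via the triangle inequality gives (\ref{eq: order of BM remainder}).

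The step I expect to be the main obstacle is the iterated-integral estimate, precisely because iterated Stratonovich integrals are not martingales: the argument must route through their It\^o representations and verify that none of the correction terms generated along the way drops below order $\mathrm{ord}(\beta)$ (so the bookkeeping is not spoiled), all while keeping every intermediate integrand adapted and square-integrable --- which is where the SDE moment bound feeds back in, and which also requires a little care about how much integrability of $y_0$ (beyond $L^2$) the highest-order remainder terms really need. A shortcut, if one is willing to invoke it, is to quote the ready-made $L^2$-estimates for iterated Stratonovich integrals and for Stratonovich--Taylor remainders in \cite{kloeden1992numerical}, after which (\ref{eq: order of BM remainder}) is almost immediate; I would nonetheless favour presenting the self-contained argument sketched above.
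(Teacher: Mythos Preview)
Your proposal is correct and, in fact, goes well beyond what the paper itself does. The paper treats this proposition as a known result: it cites \cite[Proposition 1.1]{bayer2006geometry} and \cite[Theorem 5.6.1]{kloeden1992numerical} in the statement, and the only argument it offers is the one-line observation preceding the proposition that the expansion follows ``by expanding $dy_s$ and iteratively applying (\ref{eq:chain_rule}).'' No proof environment is given, and in particular the remainder estimate (\ref{eq: order of BM remainder}) is simply quoted from the literature.

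Your sketch is precisely the standard derivation underlying those references: iterate the chain rule to generate the expansion, then bound $J_\alpha(V(\alpha))$ by combining (i) the $O(h^{1/2})$ increment estimate for $y_t - y_0$ together with the Lipschitz property of $V(\alpha)$, and (ii) an inductive $L^2$ estimate for iterated Stratonovich integrals obtained by peeling off the outermost differential and routing through the It\^o form. You have also correctly identified the genuine technical wrinkle --- that the Stratonovich-to-It\^o corrections must be tracked and shown not to lower the order --- and the observation that replacing a $\circ dW^j\circ dW^j$ pair by $dr$ preserves $\mathrm{ord}$ is exactly the right bookkeeping. So your self-contained route is sound; the paper simply elects to invoke the ready-made estimates rather than reproduce them.
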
\smallbreak

\subsection{Controlled Taylor expansion}
We now present a CDE Taylor expansion. Just as with the Stratonovich SDE, we have the following chain rule for $F \in \mathcal{C}^1(\R^e)$,
\begin{align}\label{eq:chain_rule_for_gamma}
    \mm F(y^\gamma_r) = F(y^\gamma_0) + \int_0^r F^{\prime}(y^\gamma_s)\,  dy^\gamma_s \m, \mm r\in [0,1] \m ,
\end{align}
where $y^\gamma$ denotes the solution to the CDE \eqref{eq:intro_CDE}. Again, just as in the SDE setting, by expanding ``$\m dy^\gamma_s\m $'' and iteratively applying (\ref{eq:chain_rule_for_gamma}), we can obtain a Taylor expansion.

\begin{proposition}
\label{prop:path taylor expansion}
Let $p \in \{\frac{k}{2}\}_{k \in \bN}$, $f \in \mathcal{C}_{\Lip}^{\lceil p -1 \rceil }(\R^e, \R^e)$ and $g \in \mathcal{C}_{\Lip}^{2p-1}(\R^e, \R^{e\times d})$. Then the (controlled) Taylor expansion of the CDE (\ref{eq:intro_CDE}) up to order $p$ is given by
    \begin{align}\label{eq:cde_taylor_exp}
        y^\gamma_1 = y^\gamma_0 
        +
        \sum_{\substack{\alpha\m  \in \mathcal{A} \,, \\[2pt] \textrm{ord}(\alpha) \leq \,p}}
        V(\alpha)(y^\gamma_0) I^\gamma_\alpha(1) + R^\gamma_p(h, y_0^\gamma) \m ,
    \end{align}
where, using the same notation for vector field derivatives as Proposition 3.1, we have
    \begin{align}\label{eq:R_p CDE}
        R^\gamma_p(h, y_0^\gamma) 
        :=
        \sum_{\substack{\alpha\m \in \mathcal{A} \,,\\[2pt] \textrm{ord}(\alpha) = p}} J^\gamma_\alpha(V(\alpha))
        \m .
    \end{align}
\end{proposition}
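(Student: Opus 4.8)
The identity (\ref{eq:cde_taylor_exp}) is purely pathwise: once the Brownian path is fixed, $\gamma$ is a fixed continuous piecewise linear (hence finite-variation) path, the CDE (\ref{eq:intro_CDE}) is a classical controlled ODE, and (\ref{eq:cde_taylor_exp}) is its iterated-integral (``Chen''/signature) expansion. The plan is to reproduce, essentially verbatim, the argument behind the Stratonovich Taylor expansion of Proposition \ref{prop:strat taylor expansion} (see \cite[Proposition~1.1]{bayer2006geometry}), replacing every Stratonovich integral against $W$ by an ordinary Riemann--Stieltjes integral against $\gamma$ and the Stratonovich chain rule by the pathwise chain rule (\ref{eq:chain_rule_for_gamma}) --- which, since $\gamma$ has finite length by Assumption \ref{assump:scaling}, is just the fundamental theorem of calculus applied on each linear piece of $\gamma$.

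Concretely, I would start from the integral form $y^\gamma_1 = y^\gamma_0 + \sum_{l\in\{\tau,\omega\}}I^\gamma_{(l)}(V(l))$ with $V(\tau)=f$, $V(\omega)=g$, and repeatedly expand each iterated integral that appears: applying (\ref{eq:chain_rule_for_gamma}) with $F=V(\alpha)$ and the recursion $V(l\beta)=V(\beta)'V(l)$ yields the basic step
\begin{equation*}
    I^\gamma_\alpha(V(\alpha)) \,=\, V(\alpha)(y^\gamma_0)\,I^\gamma_\alpha(1) \,+\, \sum_{l\in\{\tau,\omega\}}I^\gamma_{l\alpha}(V(l\alpha)),
\end{equation*}
equivalently $J^\gamma_\alpha(V(\alpha))=\sum_l I^\gamma_{l\alpha}(V(l\alpha))$, which is exactly the algebraic move used in the SDE proof. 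Iterating --- freezing the coefficient $V(\alpha)(y^\gamma_0)I^\gamma_\alpha(1)$ once $\textrm{ord}(\alpha)\leq p$ and not expanding any multi-index with $\textrm{ord}(\alpha)\geq p$ --- terminates after finitely many steps, since each expansion raises $\textrm{ord}$ by at least $\frac{1}{2}$; collecting terms gives (\ref{eq:cde_taylor_exp}) with $R^\gamma_p=\sum_{\textrm{ord}(\alpha)=p}J^\gamma_\alpha(V(\alpha))$. The multi-index accounting here --- which indices get expanded, which get frozen into the Taylor sum, and which are left in the remainder (including, as in Proposition~\ref{prop:strat taylor expansion}, the convention for grouping the few order-$(p+\frac{1}{2})$ terms into $R^\gamma_p$) --- is identical to the Stratonovich case, so I would present this step by reference to that structure rather than repeating it. The genuinely new points to check are: (i) every $I^\gamma_\alpha(\cdot)$ and $I^\gamma_\alpha(1)$ is a well-defined finite integral, because $\gamma$ has finite length and the integrands are continuous; (ii) the chain rule (\ref{eq:chain_rule_for_gamma}) is valid at each stage, i.e.\ $V(\alpha)\in\mathcal{C}^1$ for the multi-indices of order $<p$ that get differentiated and $V(\alpha)$ is at least continuous for those of order $p$; and (iii) these smoothness requirements are met precisely because $f\in\mathcal{C}^{\lceil p-1\rceil}_{\Lip}$ and $g\in\mathcal{C}^{2p-1}_{\Lip}$ bound the number of $f$- and $g$-derivatives carried by $V(\alpha)$ for $\textrm{ord}(\alpha)\leq p$ by exactly what is available (one extra derivative at the interior multi-indices, none at the leaves with $\textrm{ord}(\alpha)=p$).

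The main --- and essentially only --- obstacle is that this is bookkeeping rather than analysis. There is no stochastic subtlety: the pathwise chain rule carries no It\^{o}/Stratonovich correction, and, unlike Proposition \ref{prop:strat taylor expansion}, no order estimate on the remainder is asserted here (those $L^2(\P)$ bounds are the subject of Section \ref{sect:taylor_exp}). So the work reduces to setting the iteration up cleanly, tracking the multi-index accounting, and matching the vector-field-derivative count to the hypotheses. Since this all runs in exact parallel with the classical It\^{o}--Taylor / Stratonovich--Taylor expansion, I expect the proof to be short --- little more than the observation that the same induction goes through pathwise once $W$ is replaced by $\gamma$.
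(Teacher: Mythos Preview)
Your proposal is correct and takes essentially the same approach as the paper: the paper does not give a detailed proof but simply remarks that the argument is ``essentially identical to that of the Stratonovich Taylor expansion (but with $t\mapsto(t, W_t)$ replaced by $r\mapsto (\gamma_r^\tau, \gamma_r^{\m\omega})$)'', which is precisely the substitution you carry out via the pathwise chain rule (\ref{eq:chain_rule_for_gamma}) and the iterated-integral recursion. Your write-up is in fact more explicit than the paper's, spelling out the basic expansion step $I^\gamma_\alpha(V(\alpha)) = V(\alpha)(y^\gamma_0)I^\gamma_\alpha(1) + \sum_l I^\gamma_{l\alpha}(V(l\alpha))$ and the smoothness bookkeeping, but the underlying idea is identical.
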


\begin{remark}
We note the proof of Proposition \ref{prop:path taylor expansion} is essentially identical to that of the Stratonovich Taylor expansion (but with $t\mapsto(t, W_t)$ replaced by $r\mapsto (\gamma_r^\tau, \gamma_r^{\m\omega})$).
Moreover, we consider Stratonovich SDEs precisely because we can apply the ``same'' chain rule and integration by parts formula as for standard Riemann-Stieltjes integrals.
\end{remark}

We now show that the size of the terms in the CDE expansion (\ref{eq:cde_taylor_exp}) are dictated by the order of $\alpha$. This will allow us to obtain a bound on the remainder term $R^\gamma_p$ which we will then use to establish the (strong) convergence rate of the CDE approximation.
To begin, we shall consider the iterated integrals $I^\gamma_\alpha\m$, which do not depend on $f$ or $g$.

\begin{lemma}
\label{lemma:order of I integrals}
    Suppose the path $\gamma$ satisfies Assumption \ref{assump:scaling} and let $\alpha \in \mathcal{A}$. Then
    \begin{align*}
        \bE\big[\|I^\gamma_{\alpha}(1)\|^2\big] = O\big(h^{ 2\m\textrm{ord}(\alpha)}\big) \m .
    \end{align*}
\end{lemma}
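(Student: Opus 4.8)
The plan is to proceed by induction on the length $n := |\alpha|$ of the multi-index $\alpha = (\alpha_1,\dots,\alpha_n)$, exploiting the fact that $I^\gamma_\alpha(1)$ is an iterated Riemann--Stieltjes integral against the piecewise linear path $\gamma$, so everything reduces to elementary estimates on the increments of $\gamma$ controlled by Assumption \ref{assump:scaling}. The base case $n=0$ is trivial since $I^\gamma_{\varnothing}(1) = 1$ and $\mathrm{ord}(\varnothing) = 0$. For the inductive step, write $\alpha = l\beta$ with $l \in \{\tau,\omega\}$, so that
\begin{equation*}
    I^\gamma_{l\beta}(1) = \int_0^1 I^{\gamma,(0,r_1)}_\beta(1)\, d\gamma^{l}(r_1),
\end{equation*}
where $I^{\gamma,(0,r_1)}_\beta(1)$ denotes the inner iterated integral over $[0,r_1]$. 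Since $\gamma$ is piecewise linear with vertices $0 = s_0 < s_1 < \cdots < s_m = 1$ and increments $\gamma_{s_j,s_{j+1}}$, the outer integral is a finite sum over the pieces, and on each piece $d\gamma^{l}(r) = (s_{j+1}-s_j)^{-1}\gamma^{l}_{s_j,s_{j+1}}\, dr$.

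The key estimate I would isolate is a bound of the form $\bE\big[\sup_{r\in[0,1]}\|I^{\gamma,(0,r)}_\beta(1)\|^2\big] = O(h^{2\,\mathrm{ord}(\beta)})$; this is slightly stronger than the stated conclusion (it controls the running supremum, which is what the outer integration requires), but it is proved by the same induction, since the running maximum of an iterated integral over $[0,r]$ is itself dominated by $\|\gamma\|_{1\text{-var}}$ times the running maximum of the inner one. Feeding this into the outer integral, if $l = \tau$ then $|I^\gamma_{l\beta}(1)| \le \sup_r|I^{\gamma,(0,r)}_\beta(1)|\cdot \|\gamma^\tau\|_{1\text{-var}}$, and $\|\gamma^\tau\|_{1\text{-var}} = \sum_j|\gamma^\tau_{s_j,s_{j+1}}| = O(h)$ deterministically by Assumption \ref{assump:scaling}(1)--(2), so $\mathrm{ord}(l\beta) = \mathrm{ord}(\beta)+1$ and the power of $h$ increases by one, as required. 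If $l = \omega$, one gets $\|I^\gamma_{l\beta}(1)\|^2 \le \big(\sup_r\|I^{\gamma,(0,r)}_\beta(1)\|\big)^2 \cdot \big(\sum_j \|\gamma^{\m\omega}_{s_j,s_{j+1}}\|\big)^2$; after expanding the square of the sum over the $m$ pieces (a finite, $h$-independent number), applying Cauchy--Schwarz to separate the inner-integral factor from the increment factor, and using $\bE[\|\gamma^{\m\omega}_{s_j,s_{j+1}}\|^{2k}] = O(h^k)$ from Assumption \ref{assump:scaling}(2) with $k=2$ together with the (strengthened) inductive hypothesis, one obtains $O(h^{2\,\mathrm{ord}(\beta)+1}) = O(h^{2\,\mathrm{ord}(l\beta)})$ since $\mathrm{ord}(l\beta) = \mathrm{ord}(\beta) + \tfrac12$.

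The main obstacle I anticipate is bookkeeping rather than conceptual: one must be careful that in the $l=\omega$ case the inner iterated integral over $[0,r_1]$ and the outer increment $\gamma^{\m\omega}_{s_j,s_{j+1}}$ are not independent (both are built from the same path $\gamma$), so one cannot simply factor the expectation of a product; the clean fix is to bound pathwise first (pulling out $\sup_r\|I^{\gamma,(0,r)}_\beta(1)\|$) and only then take expectations and apply Cauchy--Schwarz, which decouples the two factors at the cost of passing to fourth moments of the increments — exactly what Assumption \ref{assump:scaling}(2) provides (it asserts finiteness of \emph{all} even moments with the correct scaling). A secondary point to handle cleanly is that strengthening the induction to the running supremum requires knowing the inner integral is a continuous (indeed piecewise-polynomial in the local parameter) function of its upper limit, which is immediate for Riemann--Stieltjes integrals against a continuous piecewise linear path, so no measurability or pathwise-regularity subtleties arise.
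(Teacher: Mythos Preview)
Your inductive approach is different from the paper's and essentially works, but there is a gap in the inductive hypothesis as you stated it. In the $l=\omega$ step you bound $\|I^\gamma_{l\beta}(1)\|^2$ by $\big(\sup_r\|I^{\gamma,(0,r)}_\beta(1)\|\big)^2\cdot\big(\sum_j\|\gamma^{\m\omega}_{s_j,s_{j+1}}\|\big)^2$ and then apply Cauchy--Schwarz to decouple. That inequality gives
\[
\bE\big[\|I^\gamma_{l\beta}(1)\|^2\big]\;\le\;\bE\Big[\sup_r\|I^{\gamma,(0,r)}_\beta(1)\|^{4}\Big]^{1/2}\,\bE\Big[\big(\textstyle\sum_j\|\gamma^{\m\omega}_{s_j,s_{j+1}}\|\big)^{4}\Big]^{1/2},
\]
so you need the \emph{fourth} moment of the running sup of the inner integral, not the second. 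Your strengthened hypothesis $\bE\big[\sup_r\|I^{\gamma,(0,r)}_\beta(1)\|^2\big]=O(h^{2\,\mathrm{ord}(\beta)})$ is not enough. The fix is to carry through the induction the full family of even-moment bounds $\bE\big[\sup_r\|I^{\gamma,(0,r)}_\beta(1)\|^{2k}\big]=O(h^{2k\,\mathrm{ord}(\beta)})$ for all $k\ge 1$; the Cauchy--Schwarz then consumes the $4k$-th moment of the shorter word to produce the $2k$-th moment of the longer one, and Assumption~\ref{assump:scaling} supplies all the even moments of the increments that this requires.

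By contrast, the paper's proof is non-inductive: it expands $I^\gamma_\alpha(1)$ in one shot as a finite linear combination (over subdivisions of the simplex by the vertices of $\gamma$) of terms of the form $c\cdot\bigotimes_{j=1}^{|\alpha|_\omega}\gamma^{\m\omega}_{r_i^j,r_{i+1}^j}$, where $c=O(h^{|\alpha|_\tau})$ is deterministic because the $\tau$-increments are. Taking $\bE[\|\cdot\|^2]$ of this sum and applying H\"older's inequality once to the product $\prod_{j}(\gamma^{\m\omega}_{\cdots})_{i_j}^2$ gives the bound directly. This avoids the moment-doubling cascade of the inductive route and makes the constant's dependence on $m$, $d$, and $|\alpha|$ explicit in a single step; your approach is conceptually cleaner as a recursion but pays the price of tracking higher moments throughout.
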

\begin{proof}

    Since $\gamma$ is piecewise linear, we may split the iterated integral of $\gamma$ into a finite sum of iterated integrals over intervals where the derivatives of $\gamma$ are constant. This directly follows by the standard additive property of Riemann-Stieltjes integrals. We may convert these path integrals into regular (deterministic) integrals over these intervals as $d\gamma^\tau(r) = \frac{1}{r_{i+1} - r_i}\gamma^\tau_{r_i, r_{i+1}} dr$ and $d\gamma^{\m\omega}(r) = \frac{1}{r_{i+1} - r_i}\gamma^{\m\omega}_{r_i, r_{i+1}} dr$ for $r\hspace{-0.02mm}\in\hspace{-0.02mm} [r_i, r_{i+1}]$. By Assumption \ref{assump:scaling}, $\gamma^\tau_{r_i, r_{i+1}} = O(h)$ is a deterministic constant and we therefore have
    \begin{align*}
        \bE\big[\|I^\gamma_{\alpha}(1)\|^2\big]
        & \leq
        C h^{2|\alpha|_\tau}\hspace{-1mm} \sum_{\textrm{intervals}}\bE\bigg[\Big\|\bigotimes_{j=1}^{{|\alpha|_\omega}} \gamma^{\m\omega}_{r^j_i, r^j_{i+1}}\Big\|^2\bigg] \m ,
        \\
        & =
        C h^{2|\alpha|_\tau}\hspace{-1mm} \sum_{\textrm{intervals}}\bE\bigg[\sum_{i_1=1}^d \dots \sum^d_{i_{|\alpha|_\omega} = 1}\hspace{-1mm}\big(\gamma^{\m\omega}_{r^1_i, r^1_{i+1}}\big)_{i_1}^2 
        \times
        \cdots 
        \times 
        \big(\gamma^{\m\omega}_{r^{|\alpha|_\omega}_i, r^{|\alpha|_\omega}_{i+1}}\big)_{i_{|\alpha|_\omega}}^2 \bigg] \m ,
    \end{align*}
    where ``intervals'' refers to the finite collection of subdomains of the simplex with $d\gamma^\tau(r) = \frac{1}{r_{i+1} - r_i}\gamma^\tau_{r_i, r_{i+1}} dr$ and $d\gamma^{\m\omega}(r) = \frac{1}{r_{i+1} - r_i}\gamma^{\m\omega}_{r_i, r_{i+1}} dr$.
    We can then estimate the $\gamma^{\m\omega}_{r_i, r_{i+1}}$ terms by iteratively applying H\"{o}lder's inequality to the expectation and applying the assumption that $\bE\big[|(\gamma^{\m\omega}_{r_{i}, r_{i+1}})_j|^{2k}\big] = O(h^k)$ for $k \in \bN$. This implies that
    \begin{align*}
        \bE\big[\|I^\gamma_{\alpha}(1)\|^2\big] \leq C_{d,m,|\alpha|} h^{2\m \textrm{ord}(\alpha)}.
    \end{align*}
\end{proof}\smallbreak

We now consider the $J^\gamma_{\alpha}$ terms, which will follow in much the same way as for \(I^\gamma_\alpha\).
\begin{lemma}
    \label{lemma:order of J integrals}
   Suppose that the assumptions of Theorem \ref{thm: moment bound on y_t^gamma} hold. Let $\alpha \in \mathcal{A}$ and $F:\R^e \to L\big((\R^d)^{\otimes b}, \R^e \big)$ be a globally Lipschitz continuous map for some $b \geq 1$, then
    \begin{align*}
        \bE\big[\|J^\gamma_{\alpha}(F) \|^2\big] = O\big(h^{2\m\textrm{ord}(\alpha) + 1}\big)\m .
    \end{align*}
\end{lemma}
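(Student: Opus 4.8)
The plan is to mirror the proof of Lemma~\ref{lemma:order of I integrals}, the only new ingredient being that $J^\gamma_\alpha(F)$ carries the extra integrand factor $F(y^\gamma_{r_n}) - F(y^\gamma_0)$, which I would control using the global Lipschitz property of $F$ together with the fourth moment bound of Theorem~\ref{thm: moment bound on y_t^gamma}. Writing $n=|\alpha|$ and unwinding \eqref{Def:J integrals}, the starting point is
\[
J^\gamma_\alpha(F) = \int_0^1 \int_0^{r_1}\cdots\int_0^{r_{n-1}} \big(F(y^\gamma_{r_n}) - F(y^\gamma_0)\big)\, d\gamma^{\alpha_1}(r_n)\cdots d\gamma^{\alpha_n}(r_1)\m.
\]
Taking (operator) norms inside the iterated integral exactly as in Lemma~\ref{lemma:order of I integrals} and using $\|F(y^\gamma_{r_n}) - F(y^\gamma_0)\| \leq \|F\|_{\Lip\m\text{-}1}\|y^\gamma_{r_n} - y^\gamma_0\|$, then discarding the simplex ordering constraint and bounding the iterated total-variation integral by a product of total masses, I would obtain (up to a dimensional constant)
\[
\big\|J^\gamma_\alpha(F)\big\| \leq \|F\|_{\Lip\m\text{-}1}\Big(\sup_{r\in[0,1]}\big\|y^\gamma_r - y^\gamma_0\big\|\Big)\Big(\int_0^1 |d\gamma^\tau(u)|\Big)^{|\alpha|_\tau}\Big(\int_0^1 |d\gamma^{\m\omega}(u)|\Big)^{|\alpha|_\omega}\m.
\]

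Next I would observe that $\int_0^1|d\gamma^\tau(u)| = \sum_i|\gamma^\tau_{r_i,r_{i+1}}| = O(h)$ is deterministic by Assumption~\ref{assump:scaling}, while $\int_0^1|d\gamma^{\m\omega}(u)| = \sum_i\|\gamma^{\m\omega}_{r_i,r_{i+1}}\|$ is random with controlled even moments. For the supremum, I would reuse the pathwise estimate behind Theorem~\ref{thm: moment bound on y_t^gamma}, namely $\|y^\gamma_r - y^\gamma_0\| \leq C(1+\|y^\gamma_0\|)\exp\!\big(2C\int_0^r|d\gamma(u)|\big)\int_0^r |d\gamma(u)|$ from \cite[Theorem 3.7]{friz2010multidimensional}; since the right-hand side is non-decreasing in $r$, one has $\sup_{r\in[0,1]}\|y^\gamma_r - y^\gamma_0\| \leq Z$ with $Z := C(1+\|y_0\|)\exp\!\big(2C\int_0^1|d\gamma(u)|\big)\int_0^1 |d\gamma(u)|$, and the computation already carried out in the proof of Theorem~\ref{thm: moment bound on y_t^gamma} gives $\bE[Z^4]^{1/2} = O(h)$ under the present hypotheses (this is where the exponential-moment assumption and $y_0\in L^4$ are used).

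Combining the two displays and applying the Cauchy--Schwarz inequality to separate $Z$ from the $\gamma^{\m\omega}$-increments yields
\[
\bE\big[\|J^\gamma_\alpha(F)\|^2\big] \leq C\,\|F\|_{\Lip\m\text{-}1}^2\, h^{2|\alpha|_\tau}\, \bE\big[Z^4\big]^{\frac12}\, \bE\Big[\big(\sum_i\|\gamma^{\m\omega}_{r_i,r_{i+1}}\|\big)^{4|\alpha|_\omega}\Big]^{\frac12}\m.
\]
The last factor is estimated exactly as in Lemma~\ref{lemma:order of I integrals}: Jensen's inequality passes to $\sum_i\bE[\|\gamma^{\m\omega}_{r_i,r_{i+1}}\|^{4|\alpha|_\omega}]$, after which expanding $\|\cdot\|^{4|\alpha|_\omega}$ and using H\"older's inequality with $\bE[|(\gamma^{\m\omega}_{r_i,r_{i+1}})_j|^{2k}] = O(h^k)$ (Assumption~\ref{assump:scaling}, $k=2|\alpha|_\omega$) gives $O(h^{2|\alpha|_\omega})$. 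Multiplying out, $\bE[\|J^\gamma_\alpha(F)\|^2] = O(h^{2|\alpha|_\tau})\cdot O(h)\cdot O(h^{|\alpha|_\omega}) = O(h^{2|\alpha|_\tau + |\alpha|_\omega + 1}) = O(h^{2\m\textrm{ord}(\alpha)+1})$, which is the claim.

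The step I expect to need the most care is the interaction between $Z$ and the $\gamma^{\m\omega}$-increments: $Z$ depends on the random total variation $\int_0^1|d\gamma(u)|$, hence is \emph{not} independent of the $\gamma^{\m\omega}_{r_i,r_{i+1}}$, so one cannot factor the expectation; splitting by Cauchy--Schwarz is the clean fix, and it is legitimate precisely because $\bE[\exp(16C\int_0^1|d\gamma(u)|)]<\infty$ secures $\bE[Z^4]<\infty$. Everything else — the tensor-norm bookkeeping, the bounded simplex-volume and $1/(r_{i+1}-r_i)$ constants, and the moment expansions — is routine and parallels Lemma~\ref{lemma:order of I integrals} and the proof of Theorem~\ref{thm: moment bound on y_t^gamma}.
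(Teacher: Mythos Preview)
Your proposal is correct and follows essentially the same strategy as the paper: use the Lipschitz property of $F$ to reduce to $\|y^\gamma_r - y^\gamma_0\|$, separate this from the $\gamma$-increments via Cauchy--Schwarz/H\"older, and then invoke the fourth-moment bound of Theorem~\ref{thm: moment bound on y_t^gamma} together with Assumption~\ref{assump:scaling}. The only cosmetic difference is that the paper first splits into the linear pieces and uses Jensen to push the norm inside the simplex integral (citing Theorem~\ref{thm: moment bound on y_t^gamma} as a black box for $\bE[\|y^\gamma_{r}-y^\gamma_0\|^4]=O(h^2)$), whereas you take a global pathwise $\sup$ bound and re-derive $\bE[Z^4]^{1/2}=O(h)$ from the Friz--Victoir estimate; both executions yield the same $O(h^{2\,\mathrm{ord}(\alpha)+1})$.
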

\begin{proof}
    Just as in the previous proof, we may split the iterated integral of the piecewise linear path $\gamma$ into a finite sum of iterated integrals over the intervals where both $d\gamma^\tau(r) = \frac{1}{r_{i+1} - r_i}\gamma^\tau_{r_i, r_{i+1}} dr$ and $d\gamma^{\m\omega}(r) = \frac{1}{r_{i+1} - r_i}\gamma^{\m\omega}_{r_i, r_{i+1}} dr$ for all $r \in [r_i, r_{i+1}]$.    
Applying Jensen's and H\"{o}lder's inequalities to the finite sum in $\bE\big[\|J^\gamma_{\alpha}(F) \|^2\big]$ yields
    \begin{align*}
     \bE\big[\|J^\gamma_{\alpha}(F) \|^2\big]    
        &\leq C h^{2|\alpha|_\tau} \hspace{-2mm}\sum_{\textrm{intervals}}
        \bE\Bigg[\,\bigg\|\underset{0\m <\m r_{|\alpha|}\m <\m \cdots\m <\m r_1\m <\m 1}{\idotsint} \hspace{-0.5mm}\big(F(y^\gamma_{r_{|\alpha|}}) - F(y^\gamma_0)\big)\, dr_{|\alpha|} \cdots dr_1 \bigg\|^4\Bigg]^{\frac{1}{2}}\\
        &\hspace{35mm}\times \bE\Bigg[\bigg\|\bigotimes_{j=1}^{|\alpha|_\omega} \gamma^{\m\omega}_{r^j_i, r^j_{i+1}}\bigg\|^4\Bigg]^{\frac{1}{2}}.
    \end{align*}
By applying Jensen's inequality to the uniform distribution on $[s,t]$, we have that $\big\|\int_s^t z_r\m dr\big\|^4 \leq (t-s)^{3}\int_s^t \|z_r\|^4\,dr$ for any continuous integrable process $z_r\m$. Therefore,
    \begin{align}
     \bE\big[\|J^\gamma_{\alpha}(F) \|^2\big]    
        &\leq C_1 h^{2|\alpha|_\tau} \hspace{-2mm}\sum_{\textrm{intervals}}
        \bE\Bigg[\,\,\underset{0\m <\m r_{|\alpha|}\m <\m \cdots\m <\m r_1\m <\m 1}{\idotsint}  \big\|F(y^\gamma_{r_{|\alpha|}}) - F(y^\gamma_0)\big\|^4\m dr_{|\alpha|} \cdots dr_1\Bigg]^{\frac{1}{2}}\nonumber\\
        &\hspace{35mm}\times \bE\Bigg[\bigg\|\bigotimes_{j=1}^{|\alpha|_\omega} \gamma^{\m\omega}_{r^j_i, r^j_{i+1}}\bigg\|^4\Bigg]^{\frac{1}{2}}.\label{eq:second_term}
    \end{align}
    By repeatedly applying H\"{o}lder's inequality, we can estimate the term (\ref{eq:second_term}) as $O(h^{|\alpha|_\omega})$.
Since $\textrm{ord}(\alpha) = |\alpha|_\tau + \frac{1}{2}|\alpha|_\omega$, it follows from the global Lipschitz continuity of $F$ that
        \begin{align*}
     \bE\big[\|J^\gamma_{\alpha}(F) \|^2\big]    
        &\leq C_2\|F\|^2_{\Lip\text{-}1} h^{2\m\textrm{ord}(\alpha)}\\
        &\hspace{10mm}\times \sum_{\textrm{intervals}}
        \Bigg(\,\,\underset{0\m <\m r_{|\alpha|}\m <\m \cdots\m <\m r_1\m <\m 1}{\idotsint}  \bE\Big[\big\|y^\gamma_{r_{|\alpha|}} - y^\gamma_0\big\|^4\Big]\m dr_{|\alpha|} \cdots dr_1\Bigg)^{\frac{1}{2}}.
    \end{align*}
By Theorem \ref{thm: moment bound on y_t^gamma}, we have $\bE\big[\big\|y^\gamma_{r_{|\alpha|}} - y^\gamma_0\big\|^4\big] = O(h^2)$ and thus the result follows.
\end{proof}\smallbreak

\subsection{Main result}\label{sect:main_result}

Now that we have Taylor expansions for both the CDE and the Stratonovich SDE, along with control over the size of the remainder terms in each,
we can establish the strong convergence properties of path-based splitting schemes.
We first obtain local strong and weak error estimates using a direct application of Lemmas \ref{lemma:order of I integrals} and \ref{lemma:order of J integrals} before applying the framework of Milstein and Tretyakov \cite{milstein2021physics}, which allows us to prove a global strong convergence rate for the approximating CDE.

\begin{theorem}[Local error estimates]
\label{thm:local errors}
    Suppose that the path $\gamma:[0,1] \to \R^{1+d}$ satisfies Assumption \ref{assump:scaling} and for a fixed $p \in \{\frac{k}{2}\}_{k\in \bN}\m$, let $f \in \mathcal{C}_{\Lip}^{\lceil p -1 \rceil }(\R^e, \R^e)$ and $g \in \mathcal{C}_{\Lip}^{2p-1}(\R^e, \R^{e\times d})$. Suppose also that the assumptions of Theorem \ref{thm: moment bound on y_t^gamma} hold and the integrals \(I^\gamma_{\alpha}(1)\) and \(I_{\alpha}(1)\) agree almost surely for $\alpha \in \mathcal{A}$ with \( \textrm{ord}(\alpha) \leq p - \frac{1}{2}\) and in expectation for all $\alpha \in \mathcal{A}$ with $\textrm{ord}(\alpha) = p$. Let $Y_1$ denote an approximation (e.g.~using an ODE solver) of the CDE solution $\{y^\gamma_r\}_{r \in [0,1]}$ driven by $\gamma$, such that $y_0^\gamma = y_0$ and
\begin{align}\label{eq:extra_errors}
\bE\big[\|y^\gamma_1 - Y_1\|^2\big]^\frac{1}{2} = O(h^p),\mm\text{and}\mm  \big\|\bE[\m y^\gamma_1] - \bE[Y_1]\big\| = O\big(h^{p+\frac{1}{2}}\big),
\end{align}
where $y = \{y_t\}_{t\in[0,h]}$ is the solution of the SDE (\ref{eq:strat SDE}) and $h>0$ is the step size. Then
\begin{align*}
\bE\big[\|y_h - Y_1\|^2\big]^\frac{1}{2} = O(h^p),\mm\text{and}\mm  \big\|\bE[y_h] - \bE[Y_1]\big\| = O\big(h^{p+\frac{1}{2}}\big).
\end{align*} 
\end{theorem}
\begin{remark}
In the above, $Y_1$ could represent the approximation of $y^\gamma_1$ obtained by applying one step of a standard Runge-Kutta method along each linear piece of $\gamma$. This Runge-Kutta method should be of sufficiently high order so that (\ref{eq:extra_errors}) can hold.
\end{remark}
\begin{proof}
    We start by proving the local strong error. By the triangle inequality,
    \begin{align*}
         \bE\big[\|y_h - Y_1\|^2\big]^\frac{1}{2}
         \leq 
         \bE\big[\|y_h - y^\gamma_1\|^2\big]^\frac{1}{2} + \bE\big[\|y^\gamma_1 - Y_1\|^2\big]^\frac{1}{2} = \bE\big[\|y_h - y^\gamma_1\|^2\big]^\frac{1}{2} + O(h^p),
    \end{align*}
    as the second term is the difference between the CDE solution and its approximation.\smallbreak
    
    Recall the remainder terms $R_p(h, y_0)$ and $R^\gamma_p(h, y_0)$ in Propositions \ref{prop:strat taylor expansion} and \ref{prop:path taylor expansion}. Then, by another two applications of the triangle inequality, it directly follows that
	  \begin{align*}
	  \bE\big[\|y_h - Y_1\|^2\big]^\frac{1}{2}
         & \leq  \bE\big[\|(y_h - R_p(h, y_0)) - (y^\gamma_1 - R^\gamma_p(h, y_0))\|^2\big]^\frac{1}{2} \\
         &\mm + \bE\big[\|R_p(h, y_0)\|^2\big]^\frac{1}{2} + \bE\big[\|R^\gamma_p(h, y_0)\|^2\big]^\frac{1}{2} + O(h^p),
	  \end{align*}   
     where the first term is simply the difference in the Taylor expansions, up to order $p$, of the SDE solution $y_h$ and the CDE solution $y_1^\gamma$. Therefore, by the assumption that all integrals of the form $I^\gamma_{\alpha}(1)$ are matched almost surely for $\textrm{ord}(\alpha) \leq p - \frac{1}{2}\m$, we have
  	  \begin{align*}
	  \bE\big[\|y_h - Y_1\|^2\big]^\frac{1}{2}
         \leq  \bE\big[\|R_p(h, y_0)\|^2\big]^\frac{1}{2} + \bE\big[\|R^\gamma_p(h, y_0)\|^2\big]^\frac{1}{2} + O(h^p).
	  \end{align*}   
     By Proposition \ref{prop:strat taylor expansion}, the SDE remainder term will satisfy $\bE\big[\|R_p(h, y_0)\|^2\big]^\frac{1}{2} = O(h^{p+\frac{1}{2}})$.
     On the other hand, $R^\gamma_p(h, y_0)$ is given by (\ref{eq:R_p CDE}) and therefore, by Lemma \ref{lemma:order of J integrals}, we have
       \begin{align*}
	  \bE\big[\|R^\gamma_p(h, y_0)\|^2\big]^\frac{1}{2} = O\big(h^{p+\frac{1}{2}}\big).
	  \end{align*}
    This gives the desired result for the local strong error, that $\bE\big[\|y_h - Y_1\|^2\big]^\frac{1}{2} = O(h^p)$.\smallbreak
    
    We now turn our attention to the local weak error. Using the triangle inequality and the same Taylor expansions as in the proof of local strong error, it follows that
    \begin{align*}
        \big\|\bE[y_h] - \bE[Y_1]\big\|  & \leq  \big\|\bE\big[y_h - R_p(h, y_0)\big] - \bE\big[y^\gamma_1 - R^\gamma_p(h, y_0)\big]\big\|\\[3pt]
        &\mm + \big\|\bE[y^\gamma_1] - \bE[Y_1]\big\| + \big\|\bE[R_p(h, y_0)]\big\| + \big\|\bE[R^\gamma_p(h, y_0)]\big\|.
    \end{align*}
    
    From our assumption, the $I^\gamma_{\alpha}(1)$ terms in the SDE and CDE Taylor expansions are matched in expectation for $\textrm{ord}(\alpha) \leq p$ and, therefore, the first term disappears.
    Moreover, we assume $\|\bE[y_1^\gamma] - \bE[Y_1]\| = O\big(h^{p+\frac{1}{2}}\big)$ and, by Jensen's inequality, we have
    \begin{align*}
       \|\bE[R_p(h, y_0)]\| \leq \bE\big[\|R_p(h, y_0)\|^2\big]^\frac{1}{2},\hspace{3mm}\text{and}\hspace{3mm}\|\bE[R_p^\gamma(h, y_0)]\| \leq \bE\big[\|R_p^\gamma(h, y_0)\|^2\big]^\frac{1}{2}\m.
     \end{align*}     
     Since the above terms were previously shown to be $O(h^{p+\frac{1}{2}})$, the result follows.
\end{proof}

Using the local estimates given by Theorem \ref{thm:local errors} and following the mean-square analysis of Milstein and Tretyakov \cite[Theorem 1.1.1]{milstein2021physics}, we now obtain our main result.
\begin{theorem}[Global strong error estimate]
\label{thm:global_strong_error}
Given a fixed number of steps $N$, we define a numerical solution $\{Y_k\}_{0\m\leq\m k\m\leq\m N}$ for the SDE (\ref{eq:strat SDE}) over $[0,T]$ as follows,
\begin{align*}
Y_{k+1} := \big(\text{Solution at }r=1\text{ of CDE (\ref{eq:intro_CDE}) driven by }\gamma_k:[0,1]\rightarrow\R^{1+d}\m\big)\big(Y_k\big) + E_k\m,
\end{align*}
where $Y_0 := y_0$ and, for a fixed $p \in \{\frac{k}{2}\}_{k \in \bN}\m$, the ``CDE errors'' $\{E_k\}$ uniformly satisfy
\begin{align*}
\bE\big[\|E_k\|^2\big]^\frac{1}{2} = O(h^p),\hspace{10mm}
\big\|\bE[E_k]\big\| = O\big(h^{p+\frac{1}{2}}\big),
\end{align*}
and each path $\gamma_k:[0,1]\rightarrow\R^{1+d}$ is expressible as $\gamma_k = \varphi\big(\big\{(t, W_t) : t\in\big[\frac{kT}{N},\frac{(k+1)T}{N}\big]\big\}\big)$
for some fixed path-valued function $\varphi$. We will assume that the paths $\{\gamma_k\}$ uniformly satisfy Assumption \ref{assump:scaling} and that $f \in \mathcal{C}_{\Lip}^{\lceil p - 1 \rceil }(\R^e, \R^e)$ and $g \in \mathcal{C}_{\Lip}^{2p-1}(\R^e, \R^{e\times d})$.
Suppose also that the assumptions of Theorem \ref{thm: moment bound on y_t^gamma} hold and that the integrals \(I^{\gamma_k}_{\alpha}(1)\) and \(I_{\alpha}(1)\) agree almost surely for all $\alpha \in \mathcal{A}$ with $\textrm{ord}(\alpha) \leq p - \frac{1}{2}$ and in expectation for all $\alpha \in \mathcal{A}$ with $\textrm{ord}(\alpha) = p$. Then over the finite interval $[0,T]$, for $k \in \{1, 2, \cdots, N\}$, we have
    \begin{align*}
        \bE\big[\m\|y_{kh} - Y_k\|^2\big]^{1/2} = O\big(h^{p-\frac{1}{2}
        }\big) .
    \end{align*}
\end{theorem}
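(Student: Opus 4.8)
The plan is to obtain the global estimate from the one-step (local) estimates of Theorem~\ref{thm:local errors} by invoking the fundamental mean-square convergence theorem of Milstein and Tretyakov \cite{milstein2004physics}: once a one-step scheme has local strong error $O(h^p)$ and local weak error $O(h^{p+\frac12})$, with the appropriate dependence on the starting point, its global strong error over a fixed horizon is $O(h^{p-\frac12})$.

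The first step is to expose the Markov restart structure of the scheme. Fix $k$ and condition on $\F_{kh}$. Since $\gamma_k = \varphi(\{(t,W_t) : t\in[kh,(k+1)h]\})$ is a fixed functional of the Brownian path on $[kh,(k+1)h]$ alone, it is independent of $\F_{kh}$; hence, conditionally on $\F_{kh}$, the SDE solution $\{y_t\}_{t\in[kh,(k+1)h]}$ restarted from $Y_k$ and the CDE solution driven by $\gamma_k$ restarted from $Y_k$ are precisely the configuration of Theorem~\ref{thm:local errors}, with the (there deterministic) initial value replaced by the $\F_{kh}$-measurable random variable $Y_k$. Write $\widetilde Y_{k+1}$ for the \emph{exact} CDE solution at $r=1$, before the error $E_k$ is added. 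Applying Theorem~\ref{thm:local errors} with its ``$Y_1$'' taken to be $\widetilde Y_{k+1}$ (so that the hypothesised ODE-solver errors there vanish identically) gives, conditionally on $\F_{kh}$, a local strong error $O(h^p)$ and a local weak error $O(h^{p+\frac12})$ between $y_{(k+1)h}$ and $\widetilde Y_{k+1}$. Since $Y_{k+1} = \widetilde Y_{k+1} + E_k$, the triangle inequality together with the hypotheses $\bE[\|E_k\|]^{\frac12} = O(h^p)$ and $\|\bE[E_k]\| = O(h^{p+\frac12})$ shows that the full one-step map $Y_k\mapsto Y_{k+1}$ inherits the same local strong and weak orders.

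The second step is to verify that the implied constants have the form required by \cite{milstein2004physics}, namely $K\,h^p$ and $K\,h^{p+\frac12}$ multiplied by a factor depending on $Y_k$ only polynomially, with $K$ independent of $k$ and $N$. This is carried out by retracing the constants through Theorem~\ref{thm: moment bound on y_t^gamma}, Lemmas~\ref{lemma:order of I integrals}--\ref{lemma:order of J integrals} and Proposition~\ref{prop:strat taylor expansion}, using that the paths $\{\gamma_k\}$ satisfy Assumption~\ref{assump:scaling} \emph{uniformly} in $k$ and that $f,g$ have linear growth (being globally Lipschitz). One also needs the uniform moment bound $\sup_{0\le k\le N}\bE[\|Y_k\|^2]<\infty$; this follows by a standard discrete Gr\"onwall induction whose crucial point is that the conditional mean increment $\bE[Y_{k+1}-Y_k\mid\F_{kh}]$ is $O(h)$ (weak consistency), so the cross term in $\bE[\|Y_{k+1}\|^2\mid\F_{kh}]$ is controllable, while the second moment of the increment is bounded via Theorem~\ref{thm: moment bound on y_t^gamma} and the hypotheses on $E_k$. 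With these ingredients, the fundamental theorem of \cite{milstein2004physics} applies with $p_1 = p+\frac12$ and $p_2 = p$ — admissible since $p\in\{\frac{m}{2}\}_{m\in\bN}$ forces $p_2\ge\frac12$ and $p_1 = p_2+\frac12$ — and yields $\bE[\|y_{kh}-Y_k\|^2]^{\frac12} = O(h^{p_2-\frac12}) = O(h^{p-\frac12})$ for all $1\le k\le N$, as claimed.

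The step I expect to be the main obstacle is the bookkeeping of the previous paragraph: upgrading the (unconditional) local estimates of Theorem~\ref{thm:local errors} to conditional estimates in which the dependence on the random starting point $Y_k$ is polynomial and uniform in $k$, and establishing the uniform moment bound of the numerical solution. Once these are secured, the Milstein--Tretyakov machinery can be applied as a black box and nothing further is needed.
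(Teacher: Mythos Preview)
Your proposal is correct and takes essentially the same approach as the paper: apply the local strong and weak estimates of Theorem~\ref{thm:local errors} and then invoke the Milstein--Tretyakov mean-square convergence theorem \cite{milstein2004physics} with $p_1=p+\tfrac12$, $p_2=p$. In fact the paper's proof is just this one sentence, so your write-up is considerably more detailed than the original --- in particular, your explicit identification of the Markov restart structure via the $\mathcal{F}_{kh}$-conditioning, the tracking of the polynomial dependence of the constants on $Y_k$ through Theorem~\ref{thm: moment bound on y_t^gamma} and Lemmas~\ref{lemma:order of I integrals}--\ref{lemma:order of J integrals}, and the uniform moment bound on $\{Y_k\}$ are all left implicit in the paper.
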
\smallbreak

\begin{remark} Here, the ``CDE errors'' $E_k$ represent the difference between the exact solution of the CDE and the numerical approximation obtained by discretizing each ODE in the splitting. Of course, if we can solve the ODEs exactly, then $E_k = 0$.
\end{remark}

\begin{remark}[Weak error estimates] Whilst it is not the focus of this paper, global weak error estimates can also be established for path-based splitting methods.
Moreover, to achieve high order weak convergence, splitting paths need only match moments of Brownian iterated integrals -- just as in ``Cubature on Wiener Space'' \cite{lyons2004cubature}.
We refer the reader to the PhD thesis of the final author \cite[Section 4.3]{strange2023thesis} for details.
\end{remark}

\begin{remark}[Infinite time horizon] The convergence results in this paper are established over a finite time horizon $T$. However, our framework could be employed to deal with the infinite time horizon setting under suitable conditions on the SDE. For instance, if the SDE is ergodic with an exponential contraction property then contributions of local errors to the global error are reduced (exponentially in time). An extension of the classical Milstein-Tretyakov mean-square error analysis \cite{milstein2021physics} to the infinite time horizon case for such contractive SDEs is given by \cite[Theorem 3.3.]{li2021sqrt}. See also \cite[Section 3]{FangGiles2020Adaptive} for similar, but employing Multilevel Monte Carlo (MLMC).
\end{remark}

\subsection{Application to commutative SDEs}\label{sect:comm_sdes} Although Theorem \ref{thm:global_strong_error} identifies conditions on the piecewise linear path $\gamma$ to achieve a given strong convergence rate,
it can be difficult to generate the required integrals (as discussed in the introduction).
Fortunately, the commutativity condition (\ref{eq:comm_condition}) leads to certain simplifications in the Taylor expansions of the Stratonovich SDE (\ref{eq:strat SDE}) and its CDE approximation (\ref{eq:intro_CDE}).
This is detailed in Appendix \ref{append:integral_cancel} but, as a consequence, we have the following theorem.
\begin{theorem}[Global strong error estimate for SDEs with commutative noise]\label{thm:comm_main_result}Let $y$ be the solution of SDE (\ref{eq:strat SDE}) whose diffusion vector fields satisfy the condition
\begin{align*}
\hspace{2.5mm}g_i^{\m\prime}(y)g_j(y) = g_j^{\m\prime}(y)g_i(y),\hspace{2.5mm}\forall y\in\R^e.
\end{align*}
Suppose the assumptions of Theorem \ref{thm:global_strong_error} hold for $p=2$, but with the exception that each path $\gamma_k$ now matches only the following iterated integrals of Brownian motion:
\begin{align*}
\gamma_{k}(1) - \gamma_{k}(0) & = \big(h, W_{kh,(k+1)h}\big),\\[3pt]
    \int_0^1 \big(\gamma_k^{\m\omega}(r) -\gamma_k^{\m\omega}(0)\big)\m d\gamma_k^\tau(r) 
    & =
    \int_{kh}^{(k+1)h} W_{kh,u} \, du \m,
    \\[1pt]
    \bE\bigg[\int_0^1 \Big(\big(\gamma_k^{\m\omega}\big)^i(r) - \big(\gamma_k^{\m\omega}\big)^i(0)\Big)^2 d\gamma_k^\tau(r)\bigg] & = \bE\bigg[\int_{kh}^{(k+1)h} W_{kh,u}^2 \, du \m\bigg] = \frac{1}{2}\m h^2,
\end{align*}
and for distinct $i,j,l\in \{\tau, (\omega, 1),\cdots, (\omega, d)\}$ with $\tau\in\{i,j,l\}$, we have
\begin{align*}
\bE\bigg[\int_0^1 \int_0^{r_1}\int_0^{r_2} d\gamma_k^{\m i}(r_3)\, d\gamma_k^{\m j}(r_2)\, d\gamma_k^{\m l}(r_1)\bigg] = 0.
\end{align*}
Then on the interval $[0,T]$, for $k \in \{1, 2, \cdots, N\}$, the numerical solution $\{Y_k\}$ satisfies
    \begin{align*}
        \bE\big[\,\|y_{kh} - Y_k\|^2\big]^{1/2} = O\big(h^{\frac{3}{2}}\big) .
    \end{align*}
\end{theorem}
\begin{proof}
By Theorem \ref{thm:g_symmetries}, we see that the CDE Taylor expansion can match all the ``noise only'' terms with $\textrm{ord}(\alpha) \leq 2$ simply by the path $\gamma_k$ having the increment $\gamma^{\m\omega}_{k}(1) - \gamma^{\m\omega}_{k}(0) = W_{kh,(k+1)h}\m$. Adopting the general notation used within Appendix \ref{append:integral_cancel}, we recall Theorem \ref{thm:symmetric_antisymmetric}, which gives the following decompositions of iterated integrals:
\begin{align}
I_{ij} & = \frac{1}{2}I_i\cdot I_j + \frac{1}{2}I_{[i,j]}\m,\label{eq:proof_decomp1}\\[3pt]
I_{ijk} & = \frac{1}{6}I_i\cdot I_j\cdot I_k 
    + \frac{1}{4} I_i\cdot I_{[j,k]} 
    + \frac{1}{4} I_{[i,j]}\cdot I_k 
    + \frac{1}{6} \big(I_{[[i,j], k]} + I_{[i,[j,k]]}),\label{eq:proof_decomp2}
\end{align}
for indices $i,j,k\in\{1,\cdots, d\}$. However, by identifying a coordinate of the Brownian motion with time, we see that the above would still hold when $i,j,k\in\{0, 1,\cdots, d\}$.
(In the following paragraph, the verb ``match'' refers to when $r\mapsto (\gamma_k^\tau, \gamma_k^{\m\omega})(r)$ and $t\mapsto (t, W_t)\m$ give the same iterated integral -- either almost surely or in expectation). \smallbreak

Thus, by virtue of matching $\{I_0\m, I_i\m, I_{i0}\}_{1\m\leq\m i\m\leq\m d}\m$, $\gamma_k$ matches $\{I_{[i,0]}\}$ and thus $\{I_{0i}\}\m$.
Using integration by parts, we note the following identities for triple iterated integrals:
\begin{align*}
\int_0^1 \int_0^{r_1}\int_0^{r_2} d\big(\gamma_k^{\m\omega}\big)^i(r_3)\, d\big(\gamma_k^{\m\omega}\big)^i(r_2)\, d\gamma_k^\tau(r_1) & = \int_0^1 \frac{1}{2}\Big(\big(\gamma_k^{\m\omega}\big)^i(r) - \big(\gamma_k^{\m\omega}\big)^i(0)\Big)^2 d\gamma_k^\tau(r),\\[3pt]
\int_{kh}^{(k+1)h}\hspace{-1.5mm}\int_{kh}^{u_1}\int_{kh}^{u_2} \circ\, dW_{u_3}^i\m\circ\m dW_{u_2}^i\, du_1 & = \int_{kh}^{(k+1)h} \frac{1}{2}\m W_{kh,u}^2 \, du\m.
\end{align*}
Since $\gamma_k$ is assumed to match $\{I_{ii0}\}$ in expectation and the lower order terms exactly, by (\ref{eq:proof_decomp2}) and the fact that $I_{[0,[i, i]]} = 0$, it will also match $\{I_{[i,[i,0]]}\}$ in expectation.
Hence, $\gamma_k$ matches $\{I_{i0i}, I_{0ii}\}$ in expectation by (\ref{eq:proof_decomp2}) and the antisymmetry of $[\,\cdot\m,\m\cdot\m]$.
By assumption, the remaining integrals $\{I_{ij0}, I_{i0j}, I_{0ij}\}$ are matched in expectation.\smallbreak

From the above, we see the Taylor expansions of the SDE (\ref{eq:strat SDE}) and CDE (\ref{eq:intro_CDE}) coincide up to order $p=2$, as required by Theorem \ref{thm:global_strong_error}. The result now follows.
\end{proof}\smallbreak

\begin{remark}
Just as in Theorem \ref{thm:global_strong_error}, we account for the fact that the CDE (\ref{eq:intro_CDE}), or rather the resulting sequence of ODEs, may be approximated using an ODE solver.
However, obtaining the required estimates for these additional ``CDE errors'' $\{E_k\}$ may be non-trivial and thus, we leave such an error analysis as a topic of future work.
That said, to achieve strong order $3/2$ convergence, we expect that a single step of a second order ODE solver would be sufficient to discretize ODEs depending on just $f$
and a single step of a fourth order solver (such as RK4) to suffice for the other ODEs.
The intuition is that $\gamma$ has Brownian-like scaling and so vector fields are either $O(h)$ or $O\big(h^\frac{1}{2}\big)$. Hence, we expect the local errors to be $O(h^3)$ or $O\big(h^{\frac{5}{2}}\big)$ in these two cases.
\end{remark}\smallbreak

\section{Paths}\label{sect:paths}

In this section, we present a variety of piecewise linear paths which fall into the proposed framework for developing SDE splitting methods (Theorem \ref{thm:global_strong_error}). These ``splitting paths'' correspond to both well-known numerical methods (such as Lie-Trotter and Strang splitting \cite{strang1968splitting}) as well as the new high order splitting methods, which can exploit the optimal integral estimators that are derived in Appendix \ref{append:integral_approx}. Furthermore, we illustrate both the Strang and high order splitting paths in Figure \ref{fig:Examplesparametrisation}.
Throughout, we use the notation in Example \ref{ex:path_example} and define paths by their increments.

\begin{example}[Lie-Trotter]
A Lie-Trotter splitting can be defined by one of two possible two-piece paths $\gamma^{LT1}, \gamma^{LT2}:[0,1]\rightarrow \R^{1+d}$ given by $\gamma^{LT}(z)=(\gamma^\tau, \gamma^{\m\omega})(z)$ with 
\begin{align}
\gamma_{r_i, r_{i+1}}^{LT1} & := \begin{cases}
            (h,0),  \quad & \text{if }\,i=0  
            \\[6pt]
            (0, W_{s,t}), & \text{if }\,i=1,             
        \end{cases}\hspace{7.5mm}
\gamma_{r_i, r_{i+1}}^{LT2}  := \begin{cases}
            \left(0,W_{s,t}\right),  \quad & \text{if }\,i=0  
            \\[6pt]
            \left(h, 0\right), & \text{if }\,i=1.             
        \end{cases}\label{eq:LieTrotter}
\end{align}
\end{example}

\begin{example}[Strang splitting]
The Strang splitting, see Figure \ref{fig:Examplesparametrisation}, can be defined as a three-piece path $\gamma^{S}:[0,1]\rightarrow \R^{1+d}$ given by $\gamma^{S}(z)=(\gamma^\tau, \gamma^{\m\omega})(z)$ with the pieces:
\begin{align}\label{eq:Strang}
    \gamma_{r_i, r_{i+1}}^{S} :=
        \begin{cases}
            \big(\frac{1}{2}h, 0\big),  & \text{if }\,i=0,2
            \\[6pt]
            (0, W_{s,t}), & \text{if }\,i=1.             
        \end{cases}
\end{align}
\end{example}

\begin{remark}
Using Theorem \ref{thm:global_strong_error}, it is straightforward to show that these splitting paths produce approximations with order 1 strong convergence for commutative SDEs.
However, since the Strang splitting path satisfies the conditions of Theorem \ref{thm:comm_main_result}, except $\int_0^1 ((\gamma_k^{S,\m\omega})^i(r) - (\gamma_k^{S,\m\omega})^i(0)) d\gamma_k^{S,\m\tau}(r) = \frac{1}{2}W_{kh, (k+2)h}h$, it achieves a second order weak convergence rate for commutative SDE. To achieve high order weak convergence more generally, additional random variables representing L\'{e}vy area are needed \cite{jelincic2023levygan, ninomiya2009weak}. 
\end{remark}

We now proceed to ``higher order'' piecewise linear paths that are constructed to match the increment $W_{s,t}$ and space-time L\'{e}vy area $H_{s,t}$ of the Brownian motion.
Unless stated otherwise, these paths will correspond to splitting methods that achieve order 3/2 strong convergence. We note that for these paths to match the necessary higher order iterated integrals in expectation, at least three pieces will be required. The inability of paths with two pieces to match the conditions (\ref{eq:intro_conditions3}) and (\ref{eq:intro_conditions4}) required for high order strong convergence was explicitly shown in \cite[p97 and Appendix A]{foster2020thesis}.
We begin by presenting paths (\ref{eq:high_order_strang1}) and  (\ref{eq:high_order_strang2}), which each have a total of five pieces (vertical and horizontal), and can thus be seen as extensions of the Strang splitting.

\begin{example}[High order Strang splitting (linear version)] A high order Strang splitting, see Figure \ref{fig:Examplesparametrisation}, can be defined using a five-piece path $\gamma^{HS1}:[0,1]\rightarrow \R^{1+d}$, which is linear in the Brownian motion and has the pieces:
\begin{align}\label{eq:high_order_strang1}
    \gamma_{r_i, r_{i+1}}^{HS1} :=
        \begin{cases}
            \big(\frac{3-\sqrt{3}}{6}h, 0\big),  & \text{if }\,i=0,4
            \\[5.5pt]
            \big(0, \frac{1}{2}W_{s,t} + (2-i)\sqrt{3} H_{s,t}\big), & \text{if }\,i=1,3
            \\[5.5pt]
            \big(\frac{\sqrt{3}}{3}h, 0\big), & \text{if }\,i=2.           
        \end{cases}
\end{align}
\end{example}
The next splitting path that we define will be a non-linear function of $W_{s,t}$ and $H_{s,t}$. However, in addition, it will utilize the following (independent) Rademacher variables.
\begin{definition} The space-time L\'{e}vy \textbf{swing} (\textbf{s}ide \textbf{w}ith \textbf{in}tegral \textbf{g}reater) of a Brownian motion over $[s,t]$ is defined as $n_{s,t}\in\{-1, 1\}^d$ where
\begin{align*}
n_{s,t}^i := \sgn\big(H_{s,s+\frac{1}{2}h}^i - H_{s+\frac{1}{2}h,t}^i\big).
\end{align*}
\end{definition}\smallbreak
\begin{theorem}
$n_{s,t}$ is a Rademacher random vector, independent of $(W_{s,t}\m, H_{s,t})$. 
\end{theorem}
\begin{proof}
The independence is detailed in the proof of Theorem \ref{thm:new_sst_estimator} in Appendix \ref{append:integral_approx}, where the tuple $(W_{s,t}\m, H_{s,t}\m, Z_{s,u}\m, N_{s,t})$ is shown to be jointly normal and uncorrelated, with $u := s+\hspace{-0.25mm}\frac{1}{2}h$, $Z_{s,u} := \frac{1}{8}(W_{s,u}\hspace{-0.25mm} -\hspace{-0.25mm} W_{u,t}) + \frac{3}{8}(H_{s,u} \hspace{-0.25mm}+\hspace{-0.25mm} H_{u,t})$ and $N_{s,t} := H_{s,u}\hspace{-0.25mm} -\hspace{-0.25mm} H_{u,t}$.
\end{proof}\smallbreak
\begin{example}[High order Strang splitting (non-linear version)]\label{ex:high_order_strang2} A high order Strang splitting, see Figure \ref{fig:Examplesparametrisation}, can be defined as a five-piece path $\gamma^{HS2}:[0,1]\rightarrow \R^{1+d}$, which is based on an optimal estimator for a certain Brownian integral and has pieces:
\begin{align}\label{eq:high_order_strang2}
    \gamma_{r_i, r_{i+1}}^{HS2} :=
        \begin{cases}
            \big(0, \frac{1}{2}W_{s,t} + \big(1-\frac{1}{2}i\big)H_{s,t} - \frac{1}{2}C_{s,t}\big),  & \text{if }\,i=0,4
            \\[4.5pt]
            \big(\frac{1}{2}h, 0\big), & \text{if }\,i=1, 3
            \\[4.5pt]
            \big(0, C_{s,t}\big), & \text{if }\,i=2.
        \end{cases}
\end{align}
where the random vector $C_{s,t}$ is defined component-wise by
\begin{align}\label{eq:c_high_order_strang}
    C_{s,t}^j & 
    :=
    \epsilon_{s,t}^j\bigg(\frac{1}{3}\big(W_{s,t}^j\big)^{2}  + \frac{4}{5}\big(H_{s,t}^j\big)^{2} + \frac{4}{15}h - \frac{1}{\sqrt{6\pi}}h^{\frac{1}{2}}n_{s,t}^j  W_{s,t}^j\bigg)^{\frac{1}{2}},
    \\[1pt]
    \epsilon_{s,t}^j 
    & := \sgn\bigg(W_{s,t}^j - \frac{3}{\sqrt{24\pi}}h^{\frac{1}{2}}n_{s,t}^j\bigg).
\end{align}
\end{example}
\begin{remark}
The formula (\ref{eq:c_high_order_strang}) for $C_{s,t}$ is derived so that $\int_0^1 \big(\big(\gamma_r^{\m\omega}\big)^j - \big(\gamma_0^{\m\omega}\big)^j\m\big)^2 d\gamma_r^\tau$ is equal to the optimal estimator $\bE\big[\int_s^t \big(W_{s,u}^j\big)^2 du \,\big|\,W_{s,t}\m, H_{s,t}\m, n_{s,t}\big]$, see Theorem \ref{thm:piece_linear_path_proof}.
\end{remark}\smallbreak

For paths with three pieces, two of which are vertical and only relate to diffusion vector field, we refer to the resulting approximation as the ``Shifted ODE'' approach.
We use this terminology as, in the additive noise setting, the vertical pieces correspond to additive shifts for the numerical solution and so there is only one non-trivial ODE.
As before, paths can be linear or non-linear functions of the input random variables.

\begin{example}[Shifted ODE splitting (high order and linear)] We can define a high order splitting by a three-piece path $\gamma^{SO1}:[0,1]\rightarrow \R^{1+d}$ with the following pieces:\label{ex:shifted_ode_linear}
\begin{align}\label{eq:shifted_ode_linear}
    \gamma_{r_i, r_{i+1}}^{SO1} :=
        \begin{cases}
            \big(0, (1-i)H_{s,t} + \frac{1}{2}\sqrt{h}n_{s,t}\big),  & \text{if }\,i=0,2
            \\[5pt]
            \big(h, W_{s,t} - \sqrt{h}n_{s,t}\big), & \text{if }\,i=1.             
        \end{cases}
\end{align}
\end{example}

\begin{example}[Shifted ODE splitting (high order and non-linear)] We can define a high order splitting, see Figure \ref{fig:Examplesparametrisation}, using a three-piece path $\m\gamma^{SO2}:[0,1]\rightarrow \R^{1+d}$, which is based on an optimal estimator for a certain Brownian integral and has pieces:
\begin{align}\label{eq:shifted_ode_nonlinear}
    \gamma_{r_i, r_{i+1}}^{SO2} :=
        \begin{cases}
            \big(0, \frac{1}{2}W_{s,t} + (1-i)H_{s,t} - \frac{1}{2} \widetilde{C}_{s,t}\big),  & \text{if }\,i=0,2
            \\[5pt]
            \big(h, \widetilde{C}_{s,t}\big), & \text{if }\,i=1,
        \end{cases}
\end{align}
where the random vector $\widetilde{C}_{s,t}$ is defined component-wise by
\begin{align*}
    \widetilde{C}_{s,t}^j & 
    :=
    \epsilon_{s,t}^j\bigg(\big(W_{s,t}^j\big)^{2}  + \frac{12}{5}\big(H_{s,t}^j\big)^{2} + \frac{4}{5}h - \frac{3}{\sqrt{6\pi}}h^{\frac{1}{2}}n_{s,t}^j  W_{s,t}^j\bigg)^{\frac{1}{2}},
    \\[1pt]
    \epsilon_{s,t}^j  
    & := \sgn\bigg(W_{s,t}^j - \frac{3}{\sqrt{24\pi}}h^{\frac{1}{2}}n_{s,t}^j\bigg).
\end{align*}
\end{example}
\begin{remark}
Just as $C_{s,t}$ in (\ref{eq:c_high_order_strang}), $\widetilde{C}_{s,t}$ is derived so that $\int_0^1 \big(\big(\gamma_r^{\m\omega}\big)^j - \big(\gamma_0^{\m\omega}\big)^j\m\big)^2 d\gamma_r^\tau$ is equal to the optimal estimator $\bE\big[\int_s^t \big(W_{s,u}^j\big)^2 du \,\big|\,W_{s,t}\m, H_{s,t}\m, n_{s,t}\big]$, see Theorem \ref{thm:piece_linear_path_proof}.
\end{remark}

The following paths do not generally result in high order approximations for SDEs satisfying the commutativity condition (\ref{eq:comm_condition}). However, the piecewise linear path given by (\ref{eq:shifted_ode_loworder}) results in the ``Shifted Euler'' method for SDEs with additive noise, which we demonstrate can outperform the standard Euler-Maruyama method in Section \ref{sect:experiments}.
\begin{example}[Shifted ODE splitting (low order; suitable for Euler's method)]
We can define a low order splitting by a three-piece path $\gamma^{SO3}:[0,1]\rightarrow \R^{1+d}$ with
\begin{align}\label{eq:shifted_ode_loworder}
    \gamma_{r_i, r_{i+1}}^{SO3} :=
        \begin{cases}
            \big(0, \frac{1}{2}W_{s,t} + (1-i)H_{s,t}\big),  & \text{if }\,i=0,2
            \\[6pt]
            \big(h, 0\big), & \text{if }\,i=1.             
        \end{cases}
\end{align}
\end{example}
The path (\ref{eq:shifted_ode_langevin}) is also not usually high order, but gives a third order approximation when applied to underdamped Langevin dynamics (ULD), given by equation (\ref{eq:ULD}). This surprising convergence rate is due to the fact that the non-Gaussian integral $\int_s^t W_{s,u}^{\otimes 2}\,du$ does not appear within the Taylor expansion of ULD (see \cite{foster2021shifted} for details).

\begin{example}[Shifted ODE splitting for the underdamped Langevin diffusion \cite{foster2021shifted}]\label{ex:shifted_ode_langevin}
We can define a splitting by a three-piece path $\gamma^{SO4}:[0,1]\rightarrow \R^{1+d}$ with pieces:
\begin{align}\label{eq:shifted_ode_langevin}
    \gamma_{r_i, r_{i+1}}^{SO4} :=
        \begin{cases}
            \big(0, (1-i)H_{s,t} + 6K_{s,t}\big),  & \text{if }\,i=0,2
            \\[6pt]
            \big(h, W_{s,t} - 12K_{s,t}\big), & \text{if }\,i=1.             
        \end{cases}
\end{align}
where $K_{s,t}\sim\mathcal{N}\big(0, \frac{1}{720}h I_d\big)$ is independent of $\big(W_{s,t}\m, H_{s,t}\big)$ and given by the integral
\begin{align*}
K_{s,t} & := \frac{1}{h^2}\int_s^t \bigg(W_{s,u} - \frac{u-s}{h}\,W_{s,t}\bigg)\bigg(\frac{1}{2}h - (u-s)\bigg) du\m.
\end{align*}
\end{example}\vspace{-4mm}
\begin{figure}[!hbt]
    \centering
    \includegraphics[width=\textwidth]{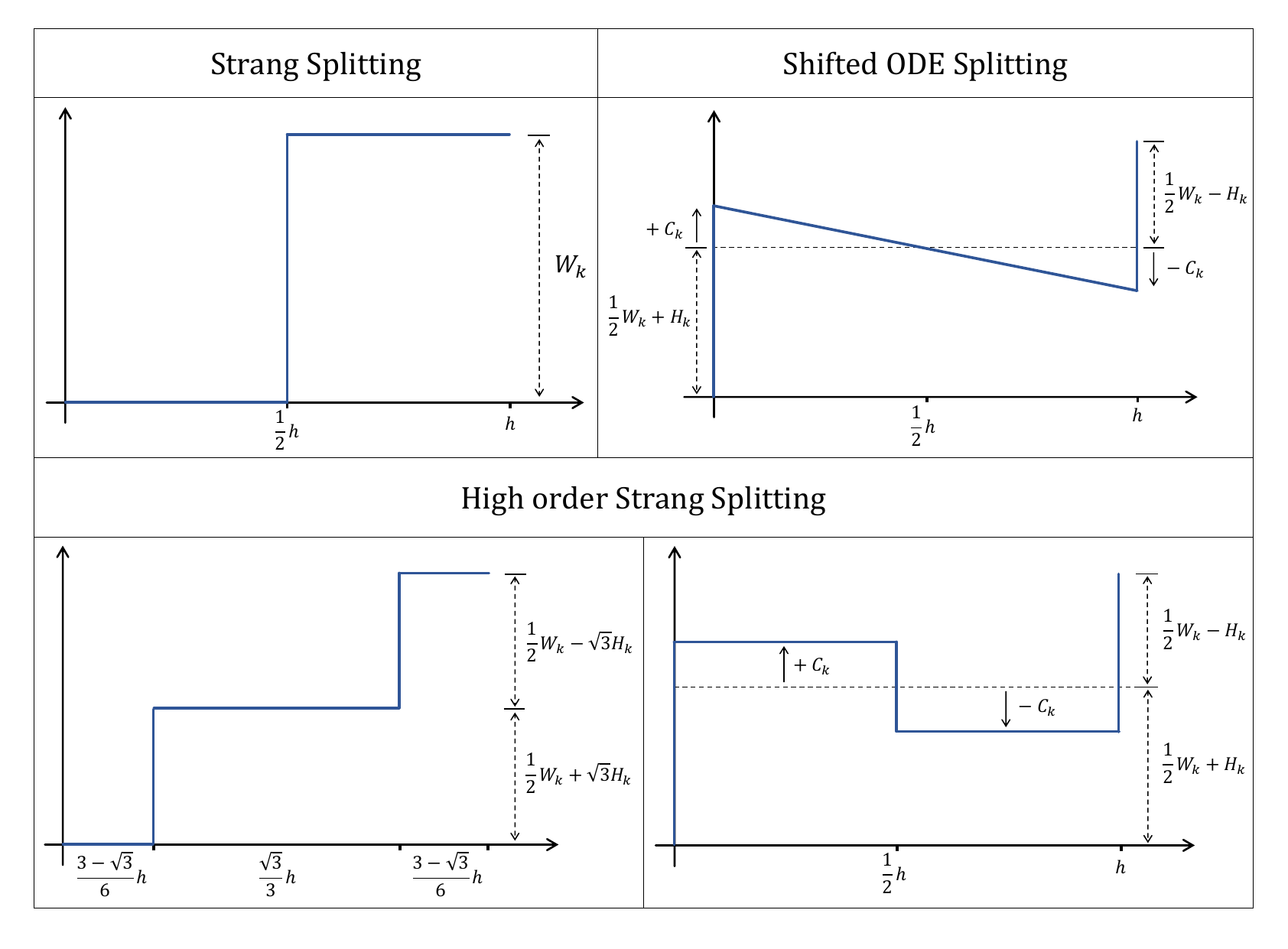}\vspace*{-7.5mm}
    \caption{Illustration of piecewise linear paths associated with various splitting methods for SDEs. (these diagrams are not drawn accurately; the ``vertical'' pieces are only the same in distribution)}
    \label{fig:Examplesparametrisation}
\end{figure}

\section{Examples}\label{sect:experiments}

We demonstrate the proposed splitting methods on several SDEs.
In each example, we consider just one high order splitting, which is chosen as follows:\smallbreak
\begin{itemize}[leftmargin=1.5em]
\item For the Cox-Ingersoll-Ross (CIR) model, we will consider the ``linear'' high order Strang splitting due to its provable weak approximation properties (Theorem \ref{thm:cir_moments}).\smallbreak
\item For general additive-noise SDEs, we will consider the ``Shifted ODE'' splittings
because they only require a single non-trivial ODE to be discretized in each step.
(and specifically (\ref{eq:shifted_ode_nonlinear}) for a scalar oscillator as it uses optimal integral estimators).\smallbreak
\item For the stochastic FitzHugh-Nagumo (FHN) model, we consider the ``non-linear'' high order Strang splitting as it uses optimal integral estimators and produces ``drift ODEs'' which can be resolved using the Strang splitting technique from \cite{Buckwar2022splitting}.\smallbreak
\item For underdamped Langevin dynamics, which has special structure, we apply the Shifted ODE splitting (\ref{eq:shifted_ode_langevin}) since it achieves third order strong convergence \cite{foster2021shifted}.\smallbreak
\end{itemize}

For the last three examples, the ``non-diffusion'' ODEs that come from the SDE splitting will not admit a closed-formed solution and thus must be further discretized.
We will show that such ODEs can be resolved by Runge-Kutta or splitting methods.\smallbreak

Throughout, we shall compare methods using the following strong error estimator:

\begin{definition}[Strong error estimator for SDEs] For $N\geq 1$, let $Y_N$ denote a numerical solution to the SDE (\ref{eq:strat SDE}) computed at time $T$ with a fixed step size $h  = \frac{T}{N}\m$.
Then we define the following estimator for quantifying the strong convergence of $Y_N$:
\begin{align}\label{eq:strong_estimator}
S_N := \sqrt{\bE\Big[\big(Y_N - Y_T^{\text{fine}}\big)^2\Big]},
\end{align}
where $Y_T^{\text{fine}}$ denotes a numerical solution to (\ref{eq:strat SDE}) computed with a finer step size, $h^{\text{fine}} \leq \frac{1}{10}\m h$, but using the same Brownian motion (so that $Y_N$ and $Y_T^{\text{fine}}$ are close).
In our examples, the expectation in (\ref{eq:strong_estimator}) will be estimated by standard Monte Carlo.
\end{definition}

All the experiments were conducted on a laptop in either C++, Python or R. Associated code is available at \href{https://github.com/james-m-foster/high-order-splitting}{github.com/james-m-foster/high-order-splitting}
and \href{https://github.com/james-m-foster/high-order-langevin}{github.com/james-m-foster/high-order-langevin} for the Langevin dynamics example.

\subsection{Cox-Ingersoll-Ross model}

The Cox-Ingersoll-Ross (or CIR) model is a popular one-factor short rate model used
in mathematical finance for modelling interest rates \cite{cir1985} and stochastic volatility \cite{heston1993}. It is given by the following It\^{o} SDE:
\begin{align}\label{eq:cir}
dy_t = a(b-y_t)\,dt + \sigma\sqrt{y_t}\,dW_t\m,
\end{align}
where the parameters $a,b,\sigma \geq 0$ describe the mean reversion speed/level and volatility.
Over the years, a variety of numerical methods have been proposed for the CIR model and we refer the reader to some of these approaches \cite{alfonsi2005cir, alfonsi2010cir, alfonsi2013cir, cozma2020cir, dereich2011cir, foster2020thesis, hefter2019cir, kelly2022cir, milstein2015cir, NinomiyaVictoir}.
As we propose splitting methods, we first rewrite the SDE (\ref{eq:cir}) in Stratonovich form:
\begin{align*}
dy_t = a(\m\widetilde{b}-y_t)\,dt + \sigma\sqrt{y_t}\circ dW_t\m.
\end{align*}
where $\widetilde{b} := b - \frac{\sigma^2}{4a}\m$. To ensure non-negativity of $\widetilde{b}$ and our scheme, we assume $\sigma^2 \leq 4ab$.
By driving (\ref{eq:cir}) with the piecewise linear path (\ref{eq:high_order_strang1}), we obtain the splitting method:
\begin{align}\label{eq:cir_splitting}
Y_k^{(1)} & := e^{-\frac{3-\sqrt{3}}{6} a h}Y_k + \widetilde{b}\big(1 - e^{-\frac{3-\sqrt{3}}{6} a h}\big),\nonumber\\
Y_k^{(2)} & := \bigg(\sqrt{Y_k^{(1)}} + \frac{\sigma}{2}\Big(\frac{1}{2}W_k + \sqrt{3}H_k\Big)\bigg)^2,\nonumber\\
Y_k^{(3)}  & := e^{-\frac{\sqrt{3}}{3} a h}Y_k^{(2)} + \widetilde{b}\big(1 - e^{-\frac{\sqrt{3}}{3} a h}\big),\nonumber\\
Y_k^{(4)} & := \bigg(\sqrt{Y_k^{(3)}} + \frac{\sigma}{2}\Big(\frac{1}{2}W_k - \sqrt{3}H_k\Big)\bigg)^2,\nonumber\\
Y_{k+1} & := e^{-\frac{3-\sqrt{3}}{6} a h}Y_k^{(4)} + \widetilde{b}\big(1 - e^{-\frac{3-\sqrt{3}}{6} a h}\big),
\end{align}
since for the CIR model, the drift and diffusion ODEs admit closed-form solutions.
The resulting numerical method (\ref{eq:cir_splitting}) is then straightforward to implement and we might expect each step to be roughly twice as expensive as previous methodologies (since the above method requires generating two Gaussian random variables per step).
We also note that the method (\ref{eq:cir_splitting}), and each of its stages, preserves non-negativity.
In addition, one can compute the mean and variance of both the SDE (\ref{eq:cir}) and (\ref{eq:cir_splitting}).
\begin{theorem}\label{thm:cir_moments}
The numerical solution given by (\ref{eq:cir_splitting}) has the following moments:
\begin{align*}
\bE[Y_{k+1}|Y_k] & = e^{-ah}Y_k + b\big(1-e^{-ah}\big) + R_k^E\m,\\
\var(Y_{k+1}|Y_k) & = \frac{\sigma^2}{a}\big(e^{- a h} - e^{-2a h}\big)Y_k + \frac{b\sigma^2}{2a}\big(1-e^{-ah}\big)^2 + R_k^V\m,
\end{align*}
where the remainder terms $R_k^E$ and $R_k^V$ can be estimated as $O(h^5)$ in an $L^2(\P)$ sense.
\end{theorem}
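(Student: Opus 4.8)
The plan is to exploit that every sub-step of (\ref{eq:cir_splitting}) is an \emph{exact} flow, so the whole scheme is a closed-form function of $Y_k,W_k,H_k$, and then to compute the two conditional moments directly (the claimed formulas being the exact conditional mean and variance of the CIR process, so the theorem asserts that the scheme reproduces these up to $O(h^5)$). First I would set $\xi_1:=\tfrac12 W_k+\sqrt3\,H_k$ and $\xi_2:=\tfrac12 W_k-\sqrt3\,H_k$; by the remark following Definition~\ref{def:st_levyarea}, $W_k\sim\mathcal N(0,h)$ and $H_k\sim\mathcal N(0,\tfrac1{12}h)$ are independent, and a one-line variance/covariance computation then shows that $\xi_1,\xi_2$ are \emph{independent} $\mathcal N(0,\tfrac12 h)$ variables, each independent of $Y_k$. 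Expanding the squares in (\ref{eq:cir_splitting}) rewrites the diffusion sub-steps as $Y_k^{(2)}=Y_k^{(1)}+\sigma\sqrt{Y_k^{(1)}}\,\xi_1+\tfrac{\sigma^2}{4}\xi_1^2$ and $Y_k^{(4)}=Y_k^{(3)}+\sigma\sqrt{Y_k^{(3)}}\,\xi_2+\tfrac{\sigma^2}{4}\xi_2^2$, while the three drift sub-steps are the affine maps $y\mapsto e^{-cah}y+\widetilde b(1-e^{-cah})$ with $c\in\{\tfrac{3-\sqrt3}{6},\tfrac{\sqrt3}{3}\}$.

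For the mean I would compute $\bE[Y_{k+1}\mid Y_k]$ by iterated conditioning through the five stages. Since $Y_k^{(1)}$ is $Y_k$-measurable and $\xi_1$ is mean-zero, independent of $Y_k$, with $\bE[\xi_1^2]=\tfrac12 h$, one gets $\bE[Y_k^{(2)}\mid Y_k]=Y_k^{(1)}+\tfrac{\sigma^2 h}{8}$, and as $\xi_2$ is independent of $(Y_k,\xi_1)$ and mean-zero, $\bE[Y_k^{(4)}\mid Y_k]=\bE[Y_k^{(3)}\mid Y_k]+\tfrac{\sigma^2 h}{8}$. Composing the affine drift maps, using $2\cdot\tfrac{3-\sqrt3}{6}+\tfrac{\sqrt3}{3}=1$ (so $e^{-2\frac{3-\sqrt3}{6}ah}e^{-\frac{\sqrt3}{3}ah}=e^{-ah}$) and telescoping the $\widetilde b$-terms, this collapses to $\bE[Y_{k+1}\mid Y_k]=e^{-ah}Y_k+\widetilde b(1-e^{-ah})+\tfrac{\sigma^2 h}{8}e^{-\frac{3-\sqrt3}{6}ah}\big(1+e^{-\frac{\sqrt3}{3}ah}\big)$. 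The key observation is that $\tfrac{3-\sqrt3}{6}=\tfrac12-\tfrac1{2\sqrt3}$ and $\tfrac{3-\sqrt3}{6}+\tfrac{\sqrt3}{3}=\tfrac12+\tfrac1{2\sqrt3}$ are exactly the two Gauss--Legendre nodes on $[0,1]$, so the last term is $\tfrac h4$ times the two-point Gauss quadrature approximation of $\int_0^1 e^{-ahs}\,ds=\tfrac{1-e^{-ah}}{ah}$; since the integrand has fourth $s$-derivative $(ah)^4 e^{-ahs}$, the quadrature error is $O((ah)^4)$ and the term equals $\tfrac{1-e^{-ah}}{4a}+O(h^5)$. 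As $\widetilde b=b-\tfrac{\sigma^2}{4a}$, this gives $\bE[Y_{k+1}\mid Y_k]=e^{-ah}Y_k+b(1-e^{-ah})+R_k^E$ with $R_k^E$ \emph{deterministic} and $O(h^5)$, so the $L^2(\P)$ statement is automatic.

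For the variance I would apply the law of total variance, conditioning on $(Y_k,\xi_1)$ and then on $Y_k$. Two elementary facts drive this: for $\xi\sim\mathcal N(0,v)$ one has $\bE[\xi^3]=0$ (so $\xi$ and $\xi^2$ are uncorrelated) and $\var(\xi^2)=2v^2$, whence $\var\big(y+\sigma\sqrt y\,\xi+\tfrac{\sigma^2}{4}\xi^2\mid y\big)=\sigma^2 v\,y+\tfrac{\sigma^4}{8}v^2$ with $v=\tfrac12 h$; and an affine drift step multiplies a conditional variance by $e^{-2cah}$. Propagating these through the five stages (reusing the formula for $\bE[Y_k^{(3)}\mid Y_k]$) gives $\var(Y_{k+1}\mid Y_k)$ as an explicit affine function of $Y_k$: its coefficient of $Y_k$ reduces to $\sigma^2 h\,e^{-ah}\cdot\tfrac12\big(e^{-\frac{3-\sqrt3}{6}ah}+e^{-\frac{3+\sqrt3}{6}ah}\big)$, which by the same Gauss argument equals $\tfrac{\sigma^2}{a}\big(e^{-ah}-e^{-2ah}\big)+O(h^5)$; and its constant, after substituting $\widetilde b=b-\tfrac{\sigma^2}{4a}$, splits into a $b$-proportional piece equal to $\tfrac{b\sigma^2}{2a}(1-e^{-ah})^2+O(h^5)$ (again a combination of two-point Gauss quadrature errors, now for $\int_0^1 e^{-ahs}\,ds$ and $\int_0^1 e^{-2ahs}\,ds$) and a $\sigma^4$-proportional piece, namely $\tfrac{\sigma^4 h^2}{32}\big(e^{-\frac{3-\sqrt3}{6}ah}+e^{-\frac{3+\sqrt3}{6}ah}\big)^2-\tfrac{\sigma^4}{8a^2}(1-e^{-ah})^2$, which is $O(h^5)$ for the same reason. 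Hence $R_k^V=O(h^5)\cdot(1+|Y_k|)$, and using the uniform-in-$k$ bound on $\bE[Y_k^2]$ that follows from the explicit mean/variance recursions (or from linear growth together with the non-negativity already noted for (\ref{eq:cir_splitting})), $\bE[(R_k^V)^2]^{1/2}=O(h^5)$.

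The hard part will be the bookkeeping for the variance's constant term: it is where the most exponentials combine and where one must verify cancellation through order $h^4$ rather than merely $h^2$. I expect the cleanest route is to keep the Gauss-quadrature interpretation visible throughout, collapsing each drift contribution via the moment identities $2\cdot\tfrac{3-\sqrt3}{6}+\tfrac{\sqrt3}{3}=1$, $\big(\tfrac{3-\sqrt3}{6}\big)^2+\big(\tfrac{3+\sqrt3}{6}\big)^2=\tfrac23$ and $\big(\tfrac{3-\sqrt3}{6}\big)^3+\big(\tfrac{3+\sqrt3}{6}\big)^3=\tfrac12$ that make the two-point rule exact for cubics; failing that, one simply expands every exponential to order $h^5$ from the outset and tracks coefficients, the cancellations being forced by exactly these identities.
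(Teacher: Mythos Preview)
Your proposal is correct and follows essentially the same route as the paper: recognise that $\xi_1=\tfrac12 W_k+\sqrt3 H_k$ and $\xi_2=\tfrac12 W_k-\sqrt3 H_k$ are independent $\mathcal N(0,\tfrac12 h)$ variables, propagate conditional means and variances through the five sub-steps using the affine drift maps and the quadratic diffusion maps, and then verify the $O(h^5)$ cancellations. The paper does this last step by brute-force Taylor expansion of each combination of exponentials (e.g.~checking directly that $\tfrac12(e^{-\frac{3+\sqrt3}{6}ah}+e^{-\frac{3-\sqrt3}{6}ah})h=\tfrac{1-e^{-ah}}{a}+O(h^5)$ by expanding both sides to fourth order).

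Your Gauss--Legendre observation is a genuine improvement over the paper's presentation: recognising $\tfrac{3\pm\sqrt3}{6}$ as the two-point Gauss nodes on $[0,1]$ explains \emph{why} all the required cancellations occur through order $h^4$ (the rule is exact for cubics), and it dispatches each exponential identity in one line rather than a half-page of expansion. It also makes transparent that the variance's $Y_k$-coefficient factors as $e^{-ah}$ times the same Gauss sum, and that the $\sigma^4$-piece is the square of that sum. The paper's approach, by contrast, gives the explicit closed-form remainders $R_k^E$ and $R_k^V$, which your argument does not directly produce but also does not need. Your note that $R_k^V$ carries a factor of $Y_k$ and therefore requires a uniform-in-$k$ second-moment bound is correct; the paper handles this in a single clause (``along with the finite second moment of $Y_k$'') without further detail, so your proposed justification via the explicit mean/variance recursion is at least as complete.
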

\begin{proof}
See the proof of Theorem \ref{append:cir_thm} in the appendix.
\end{proof}
\begin{remark}
The associated mean and variance of the CIR model (\ref{eq:cir}) are also known analytically \cite{cir1985} and given by the same formulae, but without the $O(h^5)$ terms.
\begin{align*}
\bE[y_{t_{k+1}}|y_{t_k}] & = e^{-ah}y_{t_k} + b\big(1-e^{-ah}\big)\m,\\
\var(y_{t_{k+1}}|y_{t_k}) & = \frac{\sigma^2}{a}\big(e^{- a h} - e^{-2a h}\big)y_{t_k} + \frac{b\sigma^2}{2a}\big(1-e^{-ah}\big)^2\m.
\end{align*}
\end{remark}
Therefore, even though the primary focus of the paper is on strong approximation, we see that the splitting (\ref{eq:cir_splitting}) is also a high order weak approximation of the CIR model.
Furthermore, due to the non-Lipschitz diffusion vector field in the CIR model, we can not directly apply our main result to establish the strong convergence of our method.
We thus leave the error analysis of path-based splitting methods in the ``non-smooth'' setting as future work. Similar to \cite{Buckwar2022splitting}, we expect results can be obtained in cases where the drift has polynomial growth and satisfies a (global) one-sided Lipschitz condition.\smallbreak

We will now present our experiment, comparing the strong convergence of the splitting (\ref{eq:cir_splitting}) against several well-known methods. We use the following parameters:
\begin{align*}
a=1,\mm b=1,\mm\sigma=1,\mm y_0=1,\mm T=1.
\end{align*}\vspace*{-7.5mm}
\begin{figure}[H] \label{fig:cir_convergence}
\centering
\includegraphics[width=\textwidth]{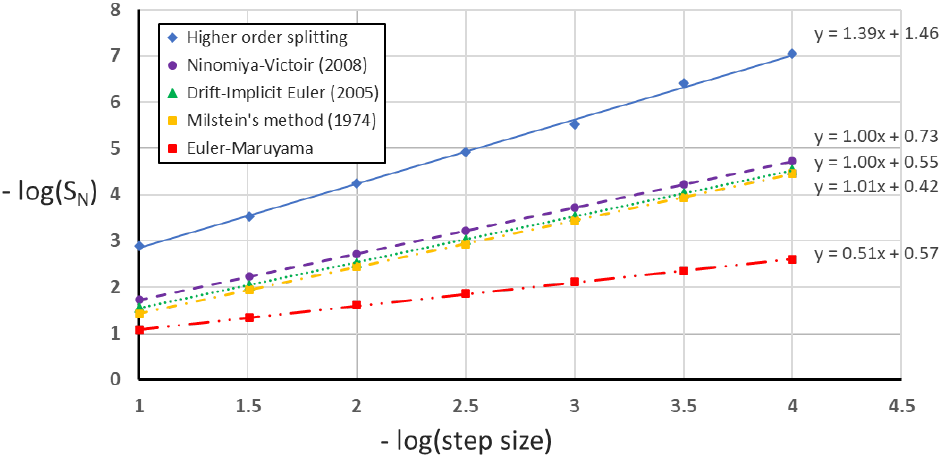}\vspace{-5mm}
\caption{$S_N$ estimated for (\ref{eq:cir}) with 100,000 sample paths as a function of step size $h = \frac{T}{
N}$.}
\end{figure}
In the above graph, we see that the proposed splitting scheme achieves a high order convergence rate and better accuracy than other schemes (for a fixed step size).
Moreover, we believe (\ref{eq:cir_splitting}) is the first non-negative method for the CIR model to show high order strong convergence and to match the first two moments with $O(h^5)$ error.
However, since the square root vector field is not globally Lipschitz continuous, our scheme converges slightly slower than the $O(h^\frac{3}{2})$ rate obtained in the ``smooth'' case.\smallbreak

As we discussed previously, each step of the splitting method (\ref{eq:cir_splitting}) will be more computationally expensive than for low order schemes. This is quantified in Table \ref{table:cir_times}.
\begin{table}[H]\label{table:cir_times}
  \caption{Computer time to simulate $100,000$ sample paths of (\ref{eq:cir}) in C++ with $100$ steps (seconds)}
  \begin{center}
  \renewcommand{\arraystretch}{1.2}
  \hspace*{-1mm}\begin{tabular}{|c|c|c|c|c|}
    \hline
    Splitting (\ref{eq:cir_splitting}) & Ninomiya-Victoir & Drift-Implicit Euler & Milstein & Euler\\
    \hline
   	2.13 & 1.07 & 1.42 & 1.01 & 0.86\\
   	\hline
  \end{tabular}
  \end{center}
\end{table}\vspace*{-2.5mm}
\begin{table}[H]\label{table:cir_approx}
  \caption{Estimated time to produce $100,000$ sample paths of (\ref{eq:cir}) with an error of $S_N = 10^{-3}$ (seconds)}
  \begin{center}
  \renewcommand{\arraystretch}{1.2}
  \hspace*{-1mm}\begin{tabular}{|c|c|c|c|c|}
    \hline
    Splitting (\ref{eq:cir_splitting}) & Ninomiya-Victoir & Drift-Implicit Euler & Milstein & Euler\\
    \hline
   	0.27 & 1.99 & 4.17 & 3.69 & 490\\
   	\hline
  \end{tabular}
  \end{center}
\end{table}

We see that the high order splitting method (\ref{eq:cir_splitting}) is roughly twice as expensive as the lower order schemes. Combining these times with Figure \ref{fig:cir_convergence}, we arrive at Table \ref{table:cir_approx}, which shows our method achieves a small error significantly faster than other schemes.\smallbreak

\subsection{Additive noise SDEs, such as a stochastic anharmonic oscillator}\label{sect:additive}
We first consider an arbitrary SDE with additive noise (that is, $g(\cdot)$ is a fixed matrix)
\begin{align}\label{eq:additive_sde}
dy_t = f(y_t)\,dt + \sigma\m dW_t\m,
\end{align}
where $\sigma\in\R^{e\times d}$. As (\ref{eq:additive_sde}) has additive noise, it is in both It\^{o} and Stratonovich form.
Since each piece in $\gamma$ yields an ODE, we would like to minimize the number of pieces.
Moreover, it was shown in \cite[p97 and Appendix A]{foster2020thesis} that two$\m$-piece paths are unable to match the various iterated integrals of Brownian motion required in Theorem \ref{thm:comm_main_result}. 
Thus, in general, we propose driving (\ref{eq:additive_sde}) by a piecewise linear path with three pieces.
So unless the SDE has special structure, such as underdamped Langevin dynamics, we recommend either the paths (\ref{eq:shifted_ode_linear}) or (\ref{eq:shifted_ode_nonlinear}) to achieve 3/2 order strong convergence.
In either case, we obtain a splitting method for the SDE (\ref{eq:additive_sde}) with the following form:
\begin{align}\label{eq:general_additive_splitting}
Y_{k+1} := \exp\big(f(\cdot)h + \sigma\m C_2\big)\big(Y_k + \sigma\m C_1\big) + \sigma\m C_3\m,
\end{align}
where the vectors $C_1\m, C_2\m, C_3 \in \R^d$ correspond to the increments of the driving path.\smallbreak

In this section, we will consider the general setting where the ODE governed by $f(\cdot)h + \sigma\m C_2$ in (\ref{eq:general_additive_splitting}) does not admit a closed-form solution and must be discretized.
Expanding terms in the Taylor expansion of this ODE, we see that (\ref{eq:general_additive_splitting}) is equal to
\begin{align*}
&\bigg(\hspace{-0.25mm}I_d(\cdot) + f(\cdot)h + \sigma C_2 + \frac{1}{2}f^{\m\prime}(\cdot)\big(f(\cdot)h + \sigma C_2\big) h + \frac{1}{6}f^{\m\prime\prime}(\cdot)\big(\sigma C_2\big)^{\hspace{-0.1mm}\otimes 2}h\bigg)\hspace{-0.25mm}\big(Y_k + \sigma\m C_1\big) + \sigma\m C_3\\
& =\hspace{-0.2mm} Y_k\hspace{-0.15mm} +\hspace{-0.15mm} \sigma\m C_1\hspace{-0.15mm} +\hspace{-0.2mm} \bigg(\hspace{-0.2mm}f\big(Y_k\big)\hspace{-0.15mm} +\hspace{-0.15mm} f^{\m\prime}\big(Y_k\big)\sigma\m C_1\hspace{-0.15mm} +\hspace{-0.15mm} \frac{1}{2}f^{\m\prime\prime}\big(Y_k\big)\big(\sigma\m C_1\big)^{\hspace{-0.15mm}\otimes 2}\hspace{-0.25mm}\bigg)\hspace{-0.25mm}h\hspace{-0.15mm} +\hspace{-0.15mm} \sigma\m C_2\hspace{-0.15mm} +\hspace{-0.15mm} \frac{1}{2}f^{\m\prime}\big(Y_k\big)f\big(Y_k\big)h^2\\
&\mm + \frac{1}{2}\Big(f^{\m\prime}\big(Y_k\big)\sigma\m C_2 + f^{\m\prime\prime}\big(Y_k\big)\big(\big(\sigma\m C_2\big)\otimes\big(\sigma\m C_1\big)\big)\Big)h + \frac{1}{6}f^{\m\prime\prime}\big(Y_k\big)\big(\sigma C_2\big)^{\hspace{-0.1mm}\otimes 2}h + \sigma\m C_3 +\cdots
\end{align*}
Rearranging these terms then gives the following expansion for the splitting method,
\begin{align*}
Y_{k+1} \approx  Y_k & + f\big(Y_k\big)h + \sigma\big(C_1 + C_2 + C_3\big) + \frac{1}{2}f^{\m\prime}\big(Y_k\big)\bigg(\sigma\m C_1 + \frac{1}{2}\sigma\m C_2\bigg)h\\
& + \frac{1}{2}\m f^{\m\prime}\big(Y_k\big)f\big(Y_k\big)h^2 + \frac{1}{2}\m f^{\m\prime\prime}\big(Y_k\big)\bigg(\big(\sigma\m C_1\big)^2 + \big(\sigma\m C_1\big)\otimes\big(\sigma\m C_2\big) + \frac{1}{3}\big(\sigma\m C_2\big)^2\bigg)h,
\end{align*}
where the final line follows as the second derivative $f^{\m\prime\prime}(Y_k)$ is symmetric and bilinear.\smallbreak

Since we intend to discretize the ODE map $x\mapsto\exp\big(f(\cdot)h + \sigma\m C_2\big)x$, we would like the Taylor expansion of the numerical ODE solver to coincide with the above.
For example, if we apply an explicit two-stage second order Runge-Kutta method (determined by a parameter $\alpha$), then this will result in the following Taylor expansion:
\begin{align*}
x^{\text{RK}} & = x + \Big(1-\frac{1}{2\alpha}\Big)\big(f(x)h + \sigma C_2\big) + \frac{1}{2\alpha}\Big(f\big(x+ \alpha \big(f(x)h + \sigma C_2\big)\big)h + \sigma C_2\Big)\\
& = x +  f(x)h + \sigma C_2 + \frac{1}{2}f^{\m\prime}(x)\big(f(x)h + \sigma C_2\big)h + \frac{1}{4}\alpha\m f^{\m\prime\prime}(x)(\sigma\m C_2)^{\otimes 2}h + \cdots.
\end{align*}
Thus, the only explicit two-stage Runge-Kutta method matching the Taylor expansion of the splitting method (\ref{eq:general_additive_splitting}) is Ralston's method \cite{ralston1962} (which corresponds to $\alpha = \frac{2}{3}$).
Hence, we propose the following numerical method for the additive noise SDE (\ref{eq:additive_sde}),
\begin{align}\label{eq:additive_noise_method}
\widetilde{Y}_{k}^{\text{SR}} & := Y_k^{\text{SR}} + \sigma\m C_1\m,\nonumber\\
\widetilde{Y}_{k+\frac{2}{3}}^{\text{SR}} & := \widetilde{Y}_{k}^{\text{SR}} + \frac{2}{3}\Big(f\big(\widetilde{Y}_{k}^{\text{SR}}\big)h + \sigma\m C_2\Big)\m,\nonumber\\
Y_{k+1}^{\text{SR}} & := Y_k^{\text{SR}} + \frac{1}{4}f\big(\widetilde{Y}_{k}^{\text{SR}}\big)h + \frac{3}{4}f\big(\widetilde{Y}_{k+\frac{2}{3}}^{\text{SR}}\big)h + \sigma\m W_k\m,
\end{align}
where $C_1\m, C_2\in\R^d$ are the first two increments of the driving piecewise linear path $\gamma$.
Note that the final line is obtained by combining the second stage of Ralston's method with the terms coming from the last vertical piece of $\gamma$ (assuming $W_k = C_1 + C_2 + C_3$).
\begin{remark}
We can extend this method to time-varying diffusions with $\sigma \equiv \sigma(t)$. For example, the terms $\sigma\m C_1\m, \sigma\m C_2$ and $\sigma\m W_k$ may be replaced by $\sigma(t_k)\m C_1$, $\sigma(t_k)\m C_2$ and
\begin{align}\label{eq:time_vary_approx}
\int_{t_k}^{t_{k+1}}\sigma(r)\, dW_r = \frac{1}{2}\big(\sigma(t_k) + \sigma(t_{k+1})\big)W_k + \big(\sigma(t_{k+1}) - \sigma(t_k)\big) H_k + O\big(h^{2.5}\big)\m.
\end{align}
\end{remark}

Although we shall not formally present an error analysis, it is clear from the above Taylor expansions that the method (\ref{eq:additive_noise_method}) achieves a 3/2 strong order convergence rate.
Similarly, we consider the case where the splitting path (\ref{eq:shifted_ode_loworder}) is applied to the additive noise SDE (\ref{eq:additive_sde}) and the resulting ODE is discretized using Euler's method. This gives
\begin{align}\label{eq:shifted_euler}
    Y_{k+1}^{\text{SE}} := Y_k^{\text{SE}} + f\Big(Y_k^{\text{SE}} + \sigma\Big(\frac{1}{2}W_k + H_k\Big)\Big)h + \sigma\m W_k\m.
\end{align}
We refer to the numerical method (\ref{eq:shifted_euler}) as the ``Shifted Euler'' discretization of (\ref{eq:additive_sde}). Whilst the Shifted Euler method will have the same first order of convergence as the Euler-Maruyama method, we expect it to be more accurate (for sufficiently small $h$).
This is due to the shifted Euler method (\ref{eq:shifted_euler}) having the following Taylor expansion:
\begin{align*}
    Y_{k+1}^{\text{SE}} = Y_k^{\text{SE}} + f\big(Y_k^{\text{SE}}\big)h + \sigma\m W_k + f^{\m\prime}\big(Y_k^{\text{SE}}\big)\bigg(\sigma\underbrace{\bigg(\frac{1}{2}W_k + H_k\bigg)h}_{=\,\int_{t_k}^{t_{k+1}} W_{t_k, r}\, dr}\,\bigg) + O(h^2).
\end{align*}
Thus, the Shifted Euler scheme has a local error of $O(h^2)$ whereas the Euler-Maruyama method produces a local error of $O(h^{\frac{3}{2}})$, which is due to the integral highlighted above.
\begin{remark}
The shifted Euler method can be extended to time-varying diffusion coefficients by replacing the terms $\,\sigma\m W_k\,$ and $\,\sigma\int_{t_k}^{t_{k+1}} W_{t_k\m, r}\, dr\,$ with $\,\int_{t_k}^{t_{k+1}} \sigma(r)\m dW_r\,$ and $\int_{t_k}^{t_{k+1}}\int_{t_k}^s \sigma(r)\, dW_r\,ds = \sigma(t_k)\int_{t_k}^{t_{k+1}} W_{t_k\m, r}\, dr + \frac{1}{6}\big(\sigma(t_{k+1})-\sigma(t_k)\big)W_k\m h^2 + O(h^{\frac{7}{2}})$.
\end{remark}

Moreover, in applications where evaluating $f$ is significantly faster than generating Gaussian random vectors, we can remove the space-time L\'{e}vy area $H_k$ from (\ref{eq:shifted_euler}).
The resulting ``increment-only'' Shifted Euler method will then have the same cost per step as the traditional Euler-Maruyama method. However, for sufficiently small $h$, it will still be more accurate as it gives $\frac{1}{2}h W_k \approx \int_{t_k}^{t_{k+1}} W_{t_k\m, r}\, dr$ in its Taylor expansion.\medbreak

We now present our numerical example; a scalar stochastic anharmonic oscillator.
\begin{align}\label{eq:oscillator}
\hspace{40mm}dy_t = \sin(y_t)\,dt + dW_t\m,\hspace{18mm} (y_0 = 1,\hspace{2.5mm} T = 1)
\end{align}
We compare our approaches (high order method (\ref{eq:additive_noise_method}) and low order method (\ref{eq:shifted_euler}))
against the 3/2 strong order SRA1 scheme in \cite{Rossler2010SRK} and the Euler-Maruyama method:
\begin{align*}
Y_{k+1}^{\text{A1}} & := Y_k^{\text{A1}} + \frac{1}{3}\m f\big(Y_k^{\text{A1}}\big)h + \frac{2}{3}\m f\bigg(Y_k^{\text{A1}} + \frac{3}{4}\Big(f\big(Y_k^{\text{A1}}\big)h + \sigma(W_k + 2H_k)\Big)\bigg)h + \sigma\m W_k\m,\\[3pt]
Y_{k+1}^{\text{EM}} & := Y_k^{\text{EM}} + f\big(Y_k^{\text{EM}}\big)h + \sigma\m W_k\m.
\end{align*}

We use the non-linear splitting path (\ref{eq:shifted_ode_nonlinear}) to obtain the variables $C_1\m, C_2$ in (\ref{eq:additive_noise_method}).\smallbreak

As the Shifted Ralston and Euler methods have the same orders of convergence as the SRA1 and Euler-Maruyama methods, we estimate the ratios of their $L^2(\P)$ errors.
Our results are presented in Figure \ref{fig:oscillator_convergence}, where we observe that the proposed methods achieve roughly a $3\times$ improvement in their accuracy for sufficiently small step sizes.\vspace{-1.0mm}
\begin{figure}[H] \label{fig:oscillator_convergence}
\centering
\includegraphics[width=\textwidth]{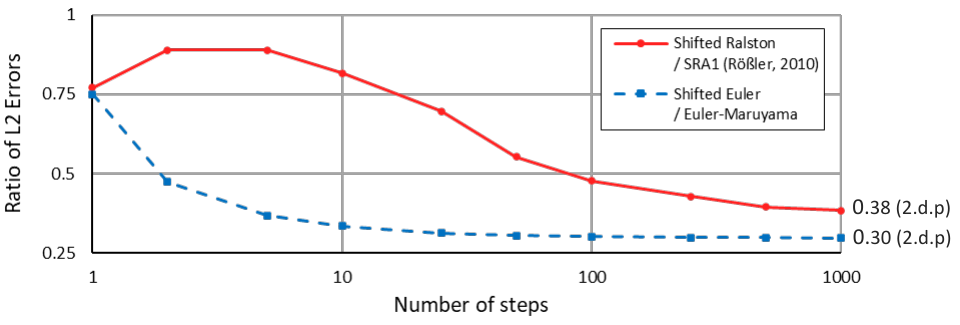}\vspace{-5.5mm}
\caption{$S_N$ estimated for (\ref{eq:oscillator}) with 1,000,000 sample paths, where $N$ is the number of steps. To better illustrate differences in accuracy between methods, we plot the ratio $S_N^{(\text{Method 1})} / S_N^{(\text{Method 2)}}$.}
\end{figure}

\begin{table}[H]\label{table:oscillator_times}
  \caption{Computer time to simulate $100,000$ sample paths of (\ref{eq:oscillator}) in C++ with $100$ steps (seconds)}\vspace*{-2.5mm}
  \begin{center}
  \renewcommand{\arraystretch}{1.2}
  \hspace*{-1mm}\begin{tabular}{|c|c|c|c|}
    \hline
    Shifted Ralston (\ref{eq:additive_noise_method}) & SRA1 scheme \cite{Rossler2010SRK} & Shifted Euler (\ref{eq:shifted_euler}) & Euler-Maruyama\\
    \hline
   	3.16 & 2.29 & 1.91 & 1.09 \\
   	\hline
  \end{tabular}
  \end{center}
  % NOTE: Shifted Euler (with $H_k=0$) got 1.20 (s)
\end{table}\vspace{-2.5mm}

\begin{table}[H]\label{table:oscillator_approx}
  \caption{Estimated time to produce $100,000$ sample paths of (\ref{eq:oscillator}) with an error of $S_N = 10^{-4}$ (seconds)}\vspace*{-3.5mm}
  \begin{center}
  \renewcommand{\arraystretch}{1.2}
  \hspace*{-1mm}\begin{tabular}{|c|c|c|c|}
    \hline
    Shifted Ralston (\ref{eq:additive_noise_method}) & SRA1 scheme \cite{Rossler2010SRK} & Shifted Euler (\ref{eq:shifted_euler}) & Euler-Maruyama\\
    \hline
   	1.79 & 2.00 & 23.1 & 47.0 \\
   	\hline
  \end{tabular}
  \end{center}
\end{table}
Taking computer times into account (Table \ref{table:oscillator_times}), we see the Shifted Ralston method gives very little improvement compared to the SRA1 scheme in this example (Table \ref{table:oscillator_approx}). However, the Shifted Euler method clearly outperforms the Euler-Maruyama scheme.\smallbreak

As both the Shifted Ralston (\ref{eq:additive_noise_method}) and SRA1 \cite{Rossler2010SRK} methods require two evaluations of the drift vector field per step, we expect them to have a similar computational cost in settings where drift evaluations are expensive (such as Langevin Monte Carlo \cite{li2019LangevinMC}).
However, if the noise is scalar, then the scheme based on the non-linear splitting path (\ref{eq:shifted_ode_nonlinear}) has the potential be almost three times more accurate, for sufficiently small $h$.
We would like to highlight this ratio for small $h$ as it can be explained theoretically.
The shifted Ralston method uses the optimal approximation (\ref{eq:L_mean}), which has an error:
\begin{align*}
&\bE\bigg[\bigg(\frac{1}{2}\int_{t_k}^{t_{k+1}} W_{t_k\m, r}^2\, dr - \bE\bigg[\frac{1}{2}\int_{t_k}^{t_{k+1}}  W_{t_k,r}^2\, dr\m\Big|\m W_k\m, H_k\m, n_k\bigg]\bigg)^2\,\bigg]^{\frac{1}{2}}\\
&\mm = \bE\bigg[\frac{11}{25200}h^4 + \Big(\frac{1}{720} - \frac{1}{384\pi}\Big)h^3 W_k^2 + \frac{1}{700}h^3 H_k^2 - \frac{1}{320\sqrt{6\pi}}n_k h^{\frac{7}{2}}W_k\bigg]^{\frac{1}{2}}\\
&\mm = \bigg(\frac{7}{3600} - \frac{1}{384\pi}\bigg)^\frac{1}{2} h^2,
\end{align*}
where the second line follows from the conditional variance (\ref{eq:L_var}).
On the other hand, the Taylor expansion of the SRA1 scheme contains $\frac{3}{16}(W_k + 2H_k)^2$, and has the error:
\begin{align*}
&\bE\bigg[\bigg(\frac{1}{2}\int_{t_k}^{t_{k+1}}  W_{t_k,r}^2\, dr - \frac{3}{16}\big(W_k + 2\m H_k\big)^2\bigg)^2\,\bigg]^{\frac{1}{2}}\\
&\mm = \bE\bigg[\bigg(\frac{1}{48}hW_k^2 + \frac{1}{4}hW_k H_k + \frac{3}{4}h H_k^2 - L_k\bigg)^2\,\bigg]^\frac{1}{2}\\
&\mm = \bigg(\bE\bigg[\bigg(\frac{1}{48}hW_k^2 + \cdots\bigg)^2\bigg]  - 2\m\bE\bigg[L_k\bigg(\frac{1}{48}hW_k^2 + \frac{1}{4}hW_k H_k + \frac{3}{4}h H_k^2\bigg)\bigg] + \bE\big[L_k^2\big]\bigg)^\frac{1}{2}\\
&\mm = \bigg(\frac{1}{48}h^4  - 2\m\bE\bigg[\bigg(\frac{1}{30}h^2 + \frac{3}{5}h H_k^2\bigg)\bigg(\frac{1}{48}hW_k^2 + \frac{1}{4}hW_k H_k + \frac{3}{4}h H_k^2\bigg)\bigg] + \frac{1}{72}\m h^4\bigg)^\frac{1}{2}\\
&\mm = \bigg(\frac{1}{120}\bigg)^\frac{1}{2} h^2,
\end{align*}
where we used Theorems 3.9 and 3.10 from \cite{foster2020OptimalPolynomial} to compute the above expectations.
Hence we can compute the ratio of these $L^2(\P)$ errors as $\big(\frac{7}{30} - \frac{5}{16\pi}\big)^\frac{1}{2} = 0.37$ (2.d.p).
Unsurprisingly, this is close to the error ratio of 0.38 that was seen in the experiment.\medbreak

More generally, when the Brownian motion is multidimensional, the non-linear splitting approach is not able to accurately approximate the ``cross'' iterated integrals
\begin{align}
\label{eq:aux-integrals3}
&\hspace{-1mm}\underset{\substack{\m r_3\m <\m r_2\m <\m r_1\m,\\[2pt] r_i\in[t_k\m,t_{k+1}]}}{\int\int\int} dW_{r_3}^{\m i} 
 \circ \m dW_{r_2}^{\m j} \, dr_1 + \hspace{-1mm}\underset{\substack{\m r_3\m <\m r_2\m <\m r_1\m,\\[2pt] r_i\in[t_k\m,t_{k+1}]}}{\int\int\int} dW_{r_3}^j \circ\m dW_{r_2}^{\m i}\, dr_1 = \int_{t_k}^{t_{k+1}}\hspace{-1.5mm} W_{t_k, r}^{\m i}W_{t_k, r}^{\m j}\m dr,
\end{align}
where $i\neq j$. On the other hand, we can see that the linear splitting path (\ref{eq:shifted_ode_linear}) satisfies
\begin{align*}
\bE\bigg[\int_{t_k}^{t_{k+1}}\hspace{-1.5mm} \gamma_{t_k, r}^{\m i}\gamma_{t_k, r}^{\m j}\m dr\m\Big|\m W_k\bigg] = \bE\bigg[\int_{t_k}^{t_{k+1}}\hspace{-1.5mm} W_{t_k, r}^{\m i} W_{t_k, r}^{\m j}\m dr \m\Big|\m W_k\bigg] = \frac{1}{3}h W_{k}^{\m i} W_{k}^{\m j} + \frac{1}{6}h^2 \delta_{ij}\m,
\end{align*}
where the expectation is computable via standard properties of the Brownian bridge.
Numerical schemes for commutative SDEs matching the above conditional expectation are known as ``asymptotically efficient'' (see \cite{castellgaines1996logode, clark1982efficient, newton1991efficient} for examples of such methods).\smallbreak

An attractive feature of both the shifted Euler and Ralston approaches is that they are efficient in terms of drift evaluations per step, requiring one and two respectively.
This is particularly appealing in applications where drift evaluations are expensive (such as in machine learning \cite{kidger2021NSDEs1, li2019LangevinMC}). We leave further investigations and the analysis of ``Shifted'' Runge-Kutta methods for additive noise SDEs as a topic of future work.\smallbreak

\subsection{FitzHugh-Nagumo model}
We consider a stochastic FitzHugh-Nagumo (FHN) model which has been used for describing the spike activity of neurons \cite{Buckwar2022splitting, leon2018fitzhugh}. The stochastic FHN model follows the two-dimensional additive noise SDE given by
\begin{align}\label{eq:fitzhugh}
d\begin{pmatrix}v_t\\[3pt] u_t\end{pmatrix} = \begin{pmatrix}\frac{1}{\epsilon}\big(v_t - v_t^3 - u_t\big)\\[3pt] \gamma v_t - u_t + \beta\end{pmatrix}dt + \begin{pmatrix}\sigma_1 & 0\\[3pt] 0 & \sigma_2\end{pmatrix}dW_t\m.
\end{align}

To discretize the stochastic FHN model, we apply the piecewise linear path (\ref{eq:high_order_strang2}) and, similar to \cite{Buckwar2022splitting}, apply a Strang splitting to approximate the resulting drift ODE.
Since $v^\prime = \frac{1}{\epsilon}(v- v^3)$ admits a closed-form solution, this leads to the splitting method:
\begin{align*}
\begin{pmatrix}V_k^{(1)}\\[3pt] U_k^{(1)}\end{pmatrix} & := \begin{pmatrix}V_k\\[3pt] U_k\end{pmatrix} + \begin{pmatrix}\sigma_1 & 0\\[3pt] 0 & \sigma_2\end{pmatrix}\begin{pmatrix}\frac{1}{2}W_k^{1} + H_k^{1} - \frac{1}{2} C_k^{1}\\[3pt] \frac{1}{2}W_k^{2} + H_k^{2} - \frac{1}{2} C_k^{2}\end{pmatrix},\\[6pt]
\begin{pmatrix}V_k^{(2)}\\[3pt] U_k^{(2)}\end{pmatrix} & := \varphi_{\frac{1}{2}h}^{\text{Strang}}\begin{pmatrix}V_k^{(1)}\\[3pt] U_k^{(1)}\end{pmatrix} + \begin{pmatrix}\sigma_1 & 0\\[3pt] 0 & \sigma_2\end{pmatrix}\begin{pmatrix} C_k^{1}\\[3pt] C_k^{2}\end{pmatrix},
\end{align*}
\begin{align}\label{eq:fitzhugh_splitting}
\begin{pmatrix}V_{k+1}\\[3pt] U_{k+1}\end{pmatrix} & := \varphi_{\frac{1}{2}h}^{\text{Strang}}\begin{pmatrix}V_k^{(2)}\\[3pt] U_k^{(2)}\end{pmatrix} + \begin{pmatrix}\sigma_1 & 0\\[3pt] 0 & \sigma_2\end{pmatrix}\begin{pmatrix}\frac{1}{2}W_k^{1} - H_k^{1} - \frac{1}{2} C_k^{1}\\[3pt] \frac{1}{2}W_k^{2} - H_k^{2} - \frac{1}{2} C_k^{2}\end{pmatrix},
\end{align}
where, for $u, v\in\R$, we define $\varphi_{\frac{1}{2}h}^{\text{Strang}}\begin{pmatrix}v\\ u\end{pmatrix}$ as
\begin{align*}
\varphi_{\frac{1}{2}h}^{\text{Strang}}\begin{pmatrix}v\\[3pt] u\end{pmatrix} & := \begin{pmatrix}\widetilde{v} \Big(e^{-\frac{h}{2\epsilon}} + \widetilde{v}^{\m 2}\big(1 - e^{-\frac{h}{2\epsilon}}\big)\Big)^{-\frac{1}{2}}\\ \widetilde{u} + \frac{1}{4}\beta h\end{pmatrix},
\end{align*}
with $\widetilde{v}$ and $\widetilde{u}$ defined by
\begin{align*}
\begin{pmatrix}\widetilde{v}\\[3pt] \widetilde{u}\end{pmatrix} & := \exp\Bigg(\frac{1}{2}h\begin{pmatrix}0 & -\frac{1}{\epsilon}\\[3pt] \gamma & -1\end{pmatrix}\Bigg)\begin{pmatrix}v \Big(e^{-\frac{h}{2\epsilon}} + v^2\big(1 - e^{-\frac{h}{2\epsilon}}\big)\Big)^{-\frac{1}{2}}\\ u + \frac{1}{4}\beta h\end{pmatrix},
\end{align*}
and the explicit formula for the above matrix exponential is given in \cite[Section 6.2]{Buckwar2022splitting}.
The random variables $C_k^1$ and $C_k^2$ are given by (\ref{eq:c_high_order_strang}) and are generated independently.
We note that, similar to the CIR model, the stochastic FHN model is challenging to accurately simulate due to the vector field not being globally Lipschitz continuous.
That said, as the drift does have polynomial growth and satisfies a one-sided Lipschitz condition, there are numerical methods for (\ref{eq:fitzhugh}) with strong convergence guarantees.
We will compare our scheme (\ref{eq:fitzhugh_splitting}) against two such methods; the Strang splitting scheme proposed in \cite{Buckwar2022splitting} and the Tamed Euler-Maruyama method introduced in \cite{Hutzenthaler2012}.

\begin{figure}[H] \label{fig:fitzhugh_convergence}
\centering
\includegraphics[width=\textwidth]{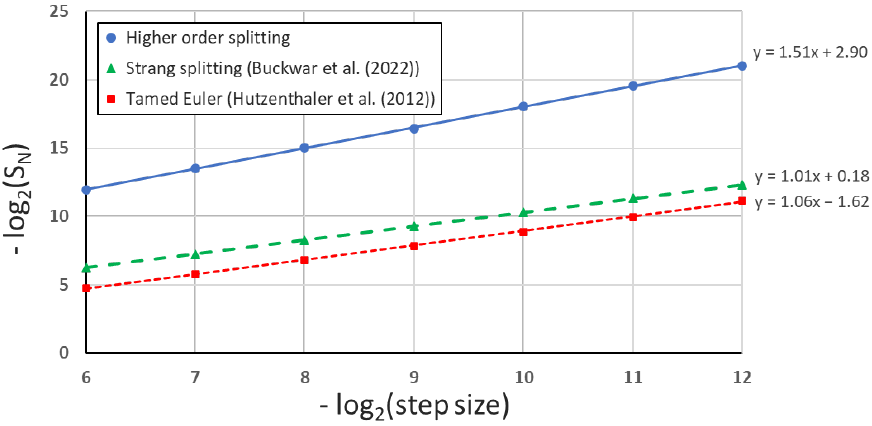}\vspace{-5mm}
\caption{$S_N$ estimated for (\ref{eq:fitzhugh}) using 1,000 sample paths as a function of step size $h = \frac{T}{N}$. The estimated strong errors for the Strang splitting and Tamed Euler schemes were taken from \cite{Buckwar2022splitting}.}
\end{figure}

In this numerical experiment, we used the following parameters in the FHN model.
\begin{align}\label{eq:fitzhugh_parameters}
\epsilon=1,\mm\gamma=1,\mm\beta=1,\hspace{5mm}\sigma_1 = 1, \mm\sigma_2=1,\mm(v_0\m, u_0) = (0,0),\mm T=5.\nonumber
\end{align}

We see in Figure \ref{fig:fitzhugh_convergence} that the proposed high order splitting exhibits a 3/2 strong convergence rate and is significantly more accurate than other schemes (for fixed $h$).
For example, our splitting approach achieves better accuracy in 320 steps than Strang splitting does in 10240 steps. As before, we present simulation times for each method.

\begin{table}[H]\label{table:fitzhugh_times}
  \caption{Computer time to simulate $1,000$ sample paths of (\ref{eq:fitzhugh}) in Python with $100$ steps (seconds)}
  \begin{center}
  \renewcommand{\arraystretch}{1.2}
  \hspace*{-1mm}\begin{tabular}{|c|c|c|}
    \hline
    High order splitting (\ref{eq:fitzhugh_splitting}) & Strang Splitting \cite{Buckwar2022splitting} & Tamed Euler-Maruyama \cite{Hutzenthaler2012} \\
    \hline
   	 8.15 & 2.66 & 1.71 \\
   	\hline
  \end{tabular}
  \end{center}
\end{table}\vspace*{-2.5mm}

\begin{table}[H]\label{table:fitzhugh_approx}
  \caption{Estimated time to produce $1,000$ sample paths of (\ref{eq:fitzhugh}) with an error of $S_N = 10^{-3}$ (seconds)}
  \begin{center}
  \renewcommand{\arraystretch}{1.2}
  \hspace*{-1mm}\begin{tabular}{|c|c|c|}
    \hline
    High order splitting (\ref{eq:fitzhugh_splitting}) & Strang Splitting \cite{Buckwar2022splitting} & Tamed Euler-Maruyama \cite{Hutzenthaler2012} \\
    \hline
   	 10.4 & 110 & 166 \\
   	\hline
  \end{tabular}
  \end{center}
\end{table}

From Tables \ref{table:fitzhugh_times} and \ref{table:fitzhugh_approx}, we conclude that the proposed high order splitting method, which was derived using the path (\ref{eq:high_order_strang2}), gives the best performance for the FHN model.\smallbreak

\subsection{Underdamped Langevin dynamics} Traditionally used as a molecular dynamics model \cite{leimkuhler2015ULD, lelievre2016uld, milstein2021physics}, the underdamped Langevin diffusion (ULD) is given by
\begin{align}\label{eq:ULD}
dx_t & = v_t\, dt,\\
dv_t & = -\gamma v_t\, dt - \nabla f(x_t)\, dt + \sqrt{2\gamma }\, dW_t\m,\nonumber
\end{align}
where $x,v\in\R^d$ are the position and momentum of a particle, $f:\R^d\rightarrow\R$ is a scalar potential, $\gamma > 0$ is a friction coefficient and $W$ is a $d$-dimensional Brownian motion.
Under mild conditions of $f$, the SDE (\ref{eq:ULD}) is known to admit a strong solution that is ergodic with stationary distribution $\pi(x,v)\propto e^{-f(x)} e^{-\frac{1}{2}\|v\|^2}$  \cite[Proposition 6.1]{InvariantExists}.
As a consequence, there has been recent interest in the application of (\ref{eq:ULD}) as an MCMC method for high-dimensional sampling \cite{tubikanec2020uld, OBABOphysics, ULDFriction, ChengMCMC, KineticLangevinMCMC, foster2021shifted, OptimalMidpointMCMC, OBABOtheory, UBUSplitting, MidpointMCMC, OBABOmetropolis}.
\smallbreak

In this section, we briefly summarise the results of \cite{foster2021shifted}, where the path (\ref{eq:shifted_ode_langevin}) and a third order Runge-Kutta method are used to derive the following numerical scheme:
\begin{align*}
V_n^{(1)} & := V_n + \sqrt{2\gamma }\,(H_n + 6K_n),\\[3pt]
X_n^{(1)} & := X_n + \bigg(\frac{1-e^{-\frac{1}{2}\gamma h}}{\gamma}\bigg)V_n^{(1)} - \bigg(\frac{e^{-\frac{1}{2}\gamma h} + \frac{1}{2}\gamma h - 1}{\gamma^2}\bigg)\nabla f(X_n)\\
&\hspace{10.5mm} + \bigg(\frac{e^{-\frac{1}{2}\gamma h} + \frac{1}{2}\gamma h - 1}{\gamma^2 h}\bigg)\sqrt{2\gamma}\,\big(W_n - 12 K_n\big),\\[3pt]
X_{n+1} & := X_n + \bigg(\frac{1-e^{-\gamma h}}{\gamma}\bigg)V_n^{(1)} - \bigg(\frac{e^{-\gamma h} + \gamma h - 1}{\gamma^2}\bigg)\bigg(\frac{1}{3}\nabla f(X_n) + \frac{2}{3}\nabla f\big(X_n^{(1)}\big)\bigg)\\
&\hspace{10.5mm} + \bigg(\frac{e^{-\gamma h} + \gamma h - 1}{\gamma^2 h}\bigg)\sqrt{2\gamma}\,\big(W_n - 12 K_n\big),\\[3pt]
V_n^{(2)} & := e^{-\gamma h} V_n^{(1)} - \frac{1}{6}e^{-\gamma h} \nabla f(X_n) h - \frac{2}{3}e^{-\frac{1}{2}\gamma h} \nabla f\big(X_n^{(1)}\big) h - \frac{1}{6}\nabla f(X_{n+1})h\\
&\hspace{10.5mm} + \bigg(\frac{1 - e^{-\gamma h}}{\gamma h} \bigg)\sqrt{2\gamma}\,\big(W_n - 12 K_n\big),\\[3pt]
V_{n+1} & := V_n^{(2)} - \sqrt{2\gamma}\,(H_n - 6K_n),
\end{align*}
which we refer to as the \textbf{SORT}\footnote{\textbf{S}hifted \textbf{O}DE with \textbf{R}unge-Kutta \textbf{T}hree} method. It is also worth noting that, since the final gradient evaluation $\nabla f(X_{n+1})$ can be used in the next step to compute $(X_{n+2}, V_{n+2})$, the SORT method uses just two additional evaluations of the gradient $\nabla f$ per step.
This is an example of the ``First Same As Last'' (FSAL) property in numerical analysis.
Moreover, as evaluating $\nabla f$ is usually much more computationally expensive than generating $d$-dimensional Gaussian random variables in practice, it follows that the SORT method is about twice as expensive per step as the Euler-Maruyama method.\smallbreak

It was shown in \cite{foster2021shifted} that under smoothness and convexity assumptions on $f$, the Shifted ODE approximation based on (\ref{eq:shifted_ode_langevin}) can achieve third order convergence.
However, this error analysis relies on properties of the ODE, and thus obtaining such convergence guarantees for the SORT method itself remains a topic of future research.\smallbreak

In the numerical experiment, we consider an application of ULD in data science,
namely the simulation of ULD as an MCMC algorithm for Bayesian logistic regression.
We use German credit data in \cite{UCI}, where each of the $m=1000$ individuals has $d=49$ features $x_i\in\R^d$ and a label $y_i\in\{-1, 1\}$ indicating if they are creditworthy or not.
The Bayesian logistic regression model states that $\P(Y_i = y_i | x_i) = (1+e^{-y_i x_i^{\T} \theta})^{-1}$
where $\theta\in\R^d$ are parameters coming from the target density $\pi(\theta)\propto \exp(-f(\theta))$ with
\begin{align*}
f(\theta) = \delta\|\theta\|^2 + \sum_{i=1}^{m} \log\big(1 + \exp\big(- y_i x_i^{\T} \theta\big)\big)\m.
\end{align*}
In the experiment, the regularisation parameter is set to $\delta = 0.05$, the ULD friction coefficient is set to $\gamma = 2$ and the initial value $\theta_0$ is sampled from a Gaussian prior as
\begin{align*}
\theta_0\sim\mathcal{N}\big(0, 10 I_d\big).
\end{align*}
In addition, we use a time horizon of $T\hspace{-0.275mm}=\hspace{-0.275mm}1000$. The results are presented in Figure \ref{fig:ULD_convergence}.\smallbreak
\begin{figure}[H] \label{fig:ULD_convergence}
\centering
\includegraphics[width=\textwidth]{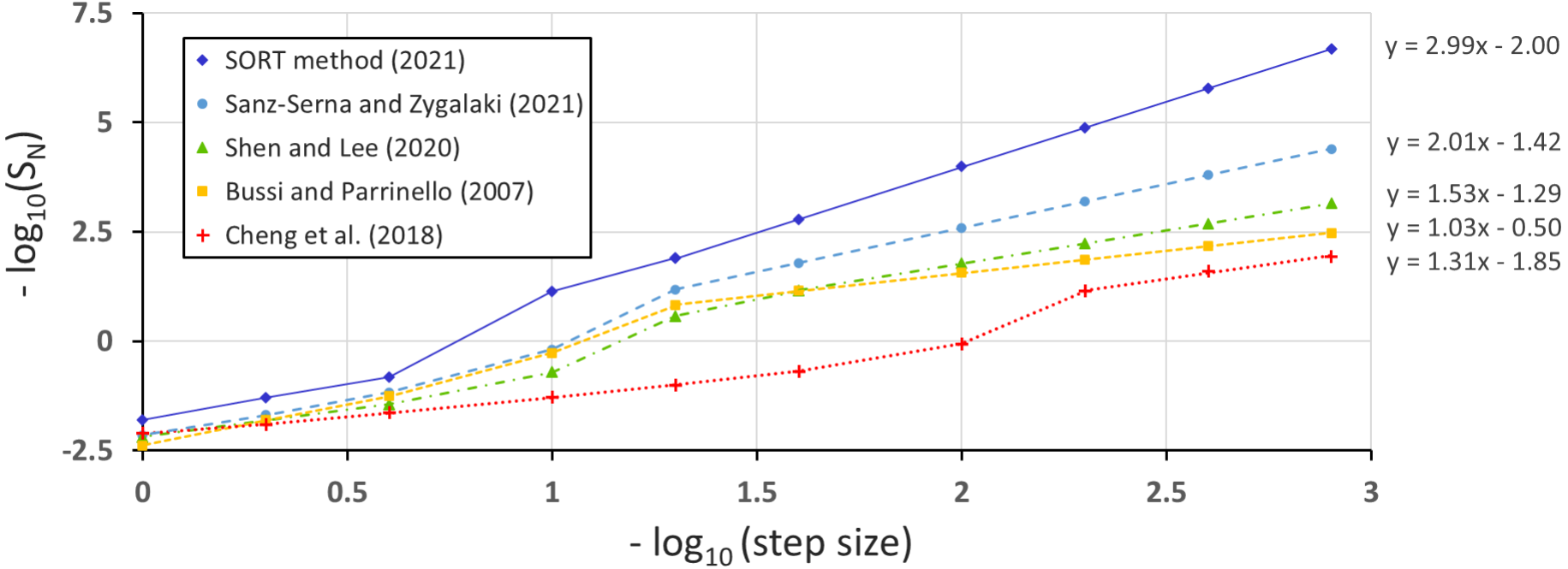}\vspace{-3mm}
\caption{$S_N$ estimated for (\ref{eq:ULD}) with 1,000 sample paths as a function of step size $h = \frac{T}{N}$.}
\end{figure}

From the above graph, we see that the SORT method \cite{foster2021shifted} and UBU splitting \cite{UBUSplitting} (plotted as blue circles) are the best performing strong numerical schemes for ULD.
We note that both the UBU \cite{UBUSplitting} and Strang \cite{tubikanec2020uld} splittings require a single additional gradient evaluation per step and achieve essentially the same accuracy in this example.
Despite the SORT method having twice the computational cost as UBU/Strang per step, we see that it is the best performing method to achieve an accuracy of $S_N \leq 0.05$.
Moreover, we believe that the SORT method is the first approximation of ULD to exhibit third order convergence whilst only requiring evaluations of the gradient $\nabla f$.\smallbreak

\section{Conclusion and future work}\label{sect:conclusion}

We have presented a new simple framework for developing and analysing splitting methods for stochastic differential equations.
The key idea is to replace the system's Brownian motion with a piecewise linear path,
which results in a path-based splitting method that is straightforward to analyse using its ``controlled'' Taylor expansion (a well-known technique within rough path theory).
Moreover, for SDEs that satisfy a commutativity condition, this led to several high order splitting methods which displayed state-of-the-art convergence in experiments.
As part of this investigation, we also detailed how recently developed estimators for iterated integrals of Brownian motion can be directly incorporated into such methods.
Since these estimators were simply obtained as the expectation of iterated integrals conditional on the generatable random variables, they are optimal in an $L^2(\P)$ sense.
The technical details underlying the integral estimators are presented in Appendix \ref{append:integral_approx}.\newpage

Furthermore, the results from this paper lead to several topics of future research:\medbreak
\begin{itemize}
\item \textbf{Development and analysis of methods inspired by splitting paths}\smallbreak
For example, for the general Stratonovich SDE (\ref{eq:strat SDE}), the following stochastic Runge-Kutta method is inspired by the splitting path (\ref{eq:shifted_ode_langevin}) with $K_{s,t} = 0$.
\begin{align}\label{eq:shark}
\widetilde{Y}_{k} := Y_k& + g\big(Y_k\big) H_k\m,\nonumber\\[2pt]
\widetilde{Y}_{k+\frac{5}{6}} := \widetilde{Y}_{k} & + \frac{5}{6}\Big(f\big(\widetilde{Y}_{k}\big)h + g\big(\widetilde{Y}_{k}\big) W_k\Big)\m,\nonumber\\[3pt]
Y_{k+1} := Y_k & + \frac{2}{5}\m f\big(\widetilde{Y}_{k}\big)h + \frac{3}{5}\m f\big(\widetilde{Y}_{k+\frac{5}{6}}\big)h\\
& + g\big(\widetilde{Y}_{k}\big)\Big(\frac{2}{5}\m W_k + \frac{6}{5}\m H_k\Big) + g\big(\widetilde{Y}_{k+\frac{5}{6}}\big)\Big(\frac{3}{5}\m W_k - \frac{6}{5}\m H_k\Big)\m.\nonumber
\end{align}
We expect that for additive noise SDEs, the above stochastic Runge-Kutta method will converge strongly with order 1.5. For SDEs with general noise, $g^{\prime}(Y_k)g(Y_k)\big(\frac{1}{2}W_k^{\otimes 2} + H_k\otimes W_k - W_k\otimes H_k\big)\m$ appears in its Taylor expansion.\vspace{0.5mm} Provided the step size is sufficiently small, this will give improved accuracy when compared to increment-only methods (see \cite[Appendix A]{foster2023convergence} for details).\medbreak
Since (\ref{eq:shark}) does not follow a high order splitting path or use Ralston's method, it was not included in Section \ref{sect:additive} and we thus leave its analysis as future work.
Similarly, conducting error analyses and further numerical investigations for the Shifted Euler and Runge-Kutta methods from Section \ref{sect:experiments} is a future topic.\medbreak

\item \textbf{Development of high order splitting methods for general SDEs}\smallbreak
For example, the below method is a combination of the Strang splitting (\ref{eq:Strang}) and the log-ODE method from rough path theory \cite[Appendices A and B]{morrill2021NRDEs}.
\begin{align*}
Y_{k+1} := \exp\bigg(\frac{1}{2}\m f(\m\cdot\m)\m h\bigg)
    \exp\bigg(g(\m\cdot\m)W_k + \sum_{i\m <\m j} \,[\m g_i\m, g_j](\m\cdot\m)A_k^{ij} \bigg)\exp\bigg(\frac{1}{2}\m f(\m\cdot\m)\m h\bigg)Y_k\m,
\end{align*}
where $[\m g_i\m, g_j](\cdot) = g_j^\prime (\cdot) g_i(\cdot) - g_i^\prime (\cdot) g_j(\cdot)$ is the standard vector field Lie bracket and $A_k = \{A_k^{ij}\}_{1\leq i,j\leq d}$ is the L\'{e}vy area of the Brownian motion over $[t_k\m, t_{k+1}]$.\medbreak

If $A_k$ is replaced by a random matrix $\widetilde{A}_k\m$, with the same mean and covariance, we expect the resulting splitting method to achieve $O(h^2)$ weak convergence.
Moreover, higher order convergence was observed in \cite{jelincic2023levygan} where this splitting method was employed to test a deep-learning-based model for generating $A_k$.\medbreak

Similarly, we expect that the Ninomiya-Ninomiya \cite{ninomiya2009weak} and Ninomiya-Victoir \cite{NinomiyaVictoir} weak second order schemes can be reinterpreted as path-based splittings. Furthermore, combining the Strang and Ninomiya-Ninomiya splittings yields
\begin{align*}
Y_{k+1} & := \exp\bigg(\frac{1}{2}\m f(\m\cdot\m)\m h\bigg)
    \exp\bigg(g(\m\cdot\m)\Big(\frac{1}{2}W_k - \sqrt{2} B_k\Big)\bigg)\\
    &\hspace{28mm} \exp\bigg(g(\m\cdot\m)\Big(\frac{1}{2}W_k + \sqrt{2} B_k\Big)\bigg)\exp\bigg(\frac{1}{2}\m f(\m\cdot\m)\m h\bigg)Y_k\m,
\end{align*}
where $B_k = W_{t_k, t_k+\frac{1}{2}h} - \frac{1}{2}W_k\sim\mathcal{N}(0, \frac{h}{4}I_d)$ is the Brownian bridge's midpoint. Just like the Ninomiya-Ninomiya scheme, we expect this splitting method has second order weak convergence, but with the advantage that the drift ODEs can be ``merged'' between steps and solved using a second order ODE solver. \medbreak

\item \textbf{Extension of path-based framework to piecewise log-ODE methods}\smallbreak
When (\ref{eq:comm_condition}) holds, we could instead consider the ``\hspace{0.25mm}Strang-log-ODE\hspace{0.25mm}'' splitting,
\begin{align*}
Y_{k+1} := \exp\bigg(\frac{1}{2}\m f(\m\cdot\m)\m h\bigg)
    \exp\Big(g(\m\cdot\m)W_k + [\m g\m, f](\m\cdot\m)h H_k\Big)\exp\bigg(\frac{1}{2}\m f(\m\cdot\m)\m h\bigg)Y_k\m,
\end{align*}
where $[\m g\m, f](\m\cdot\m)hH_k := \sum_{i=1}^d [\m g_i\m, f](\m\cdot\m)hH_k^i\m$ is the Lie bracket applied to $hH_k\m$.
However, whilst we expect this method to achieve $O(h^\frac{3}{2})$ strong convergence, our path-based splitting framework is not applicable due to the Lie bracket.
In particular, extending our theory to piecewise log-ODE methods would be helpful for SDEs where vector field derivatives can be computed or estimated.\medbreak
By discretizing the ``\hspace{0.25mm}diffusion log-ODE\hspace{0.25mm}'' using a fourth order Runge-Kutta method and taking a finite difference approximation of the $g^\prime(\cdot)f(\cdot)hH_k$ term, we can extend the additive noise SRA1 scheme in \cite{Rossler2010SRK} to commutative SDEs.
\begin{align}\label{eq:slow_rk}
Y_{k+1} := Y_k & + \bigg(\frac{1}{3}\m f_{k,1} + \frac{2}{3}\m f_{k,2}\bigg) h\\
& + \frac{1}{6}\big(g_{k,1} + 2\m g_{k,2} + 2\m g_{k,3} + g_{k,4}\big)W_k + 2\big(g_{k,3} - g_{k,5}\big)H_k\m,\nonumber
\end{align}
where
\begin{align*}
f_{k,1} & := f(Y_k),\hspace{17.45mm} Y_{k, 1} := Y_k + \frac{1}{2}\m f_{k,1}h,\\[2pt]
g_{k,1} & := g(Y_{k, 1}), \hspace{15.5mm}Y_{k, 2} := Y_{k, 1}  + \frac{1}{2}\m g_{k,1}W_k\m,\\[2pt]
g_{k,2} & := g\big(Y_{k, 2}\big),\hspace{15mm} Y_{k, 3}  := Y_{k, 1}  + \frac{1}{2}\m g_{k,2}W_k\m,\\[5pt]
g_{k,3} & := g\big(Y_{k, 3}\big),\hspace{15mm} Y_{k, 4}  := Y_{k, 1}  + g_{k,3}W_k\m,\\[5pt]
g_{k, 4} & := g\big(Y_{k, 4}\big),\hspace{15mm} g_{k,5} := g\Big(Y_{k, 3} + \frac{1}{2}\m f_{k,1}h\Big),\\[5pt]
f_{k,2} & := f\bigg(Y_k + \frac{3}{4}\m f_{k,1} h +  g_{k,3}\bigg(\frac{3}{4}\m W_k + \frac{3}{2}\m H_k\bigg)\bigg).
\end{align*}
We expect that the above 7-stage stochastic Runge-Kutta method achieves strong order 1.5 convergence for Stratonovich SDEs with commutative noise.
This would improve upon the strong 1.5 methods proposed in \cite{Burrage2010SRK} and \cite{xiao2016scalar}, which both require 10 vector field evaluations in each step.

\medbreak

\item \textbf{Incorporating $\boldsymbol{(W_k\m, H_k\m, n_k)}$-methods into Multilevel Monte Carlo}\smallbreak

Multilevel Monte Carlo (MLMC), introduced by Giles in \cite{Multilevel}, is a popular control variate strategy for achieving variance reduction in SDE simulation.  Whilst the MLMC literature is extensive, we refer the reader to \cite{AlGerbi2016ninomiya, MultilevelAcceleration, MultilevelMilstein, MultilevelAntithetic} for details on some of the improvements to MLMC that have been proposed.\smallbreak
The variance reduction obtained by MLMC is a consequence of the strong convergence properties of the SDE solver as sample paths are computed for each level using two step sizes ($h$ and $\frac{1}{2}h$), but with the same Brownian paths. \medbreak
However, the majority of MLMC methods for SDE simulation use only the increments of the Brownian motion. Thus, we conjecture that further variance reduction can be achieved using high order $(W_k\m, H_k\m, n_k)$-based SDE solvers.\medbreak
This hypothesis is supported by the experiments detailed in \cite[Section 6.3]{strange2023thesis} (for a commutative SDE) and \cite[Section 6]{jelincic2023levygan} (for a non-commutative SDE).
That said, in the extensive MLMC literature, the application of high order numerical methods for SDEs has seen relatively little attention  \cite{AlGerbi2016ninomiya, MultilevelAcceleration, jelincic2023levygan, strange2023thesis}.\medbreak

\item \textbf{Incorporating adaptive step sizes into $\boldsymbol{(W_k\m, H_k\m, n_k)}$-based methods}\smallbreak

Since it is possible to generate both $(W_{s,u}\m, H_{s,u}\m, n_{s,u})$ and $(W_{u,t}\m, H_{u,t}\m, n_{u,t})$ conditional on $(W_{s,t}\m, H_{s,t}\m, n_{s,t})$, where $u=s+\frac{1}{2}h$ is the midpoint of $[s,t]$, the proposed splitting methods can be applied using an adaptive step size. Such a methodology was detailed and initial investigated in \cite[Chapter 6]{foster2020thesis}.\medbreak

\item \textbf{Application to high-dimensional SDEs in physics and data science}\smallbreak
High-dimensional SDEs have seen a variety of real-world applications, ranging from molecular dynamics \cite{leimkuhler2015ULD, milstein2021physics} to machine learning \cite{kidger2021NSDEs2, kidger2021NSDEs1, li2021sqrt, song2021scoredbased, welling2011SGLD, zhang2022sampling}.
Therefore, it would be interesting to investigate whether the splitting methods developed in this paper could improve algorithms used in these applications.
\end{itemize}

\bibliographystyle{siamplain}
\bibliography{references}

\begin{thebibliography}{10}

\bibitem{AlGerbi2016ninomiya}
{\sc A.~Al~Gerbi, B.~Jourdain, and E.~Cl{\'e}ment}, {\em {Ninomiya--Victoir
  scheme: Strong convergence, antithetic version and application to multilevel
  estimators}}, Monte Carlo Methods and Applications, 22 (2016), pp.~197--228.

\bibitem{alfonsi2005cir}
{\sc A.~Alfonsi}, {\em {On the discretization schemes for the CIR (and Bessel
  squared) processes}}, {Monte Carlo Methods and Applications}, 11 (2005),
  pp.~355--384.

\bibitem{alfonsi2010cir}
{\sc A.~Alfonsi}, {\em {High order discretization schemes for the CIR process:
  Application to Affine Term Structure and Heston models}}, {Mathematics of
  Computation}, 79 (2010), pp.~209--237.

\bibitem{alfonsi2013cir}
{\sc A.~Alfonsi}, {\em {Strong order one convergence of a drift implicit Euler
  scheme: Application to the CIR process}}, {Statistics \& Probability
  Letters}, 83 (2013), pp.~602--607.

\bibitem{bayer2006geometry}
{\sc C.~Bayer}, {\em The geometry of iterated stratonovich integrals},
  preprint,  (2006),
  \url{https://www.wias-berlin.de/people/bayerc/files/strat_geom.pdf}.

\bibitem{biswas2022explicit}
{\sc S.~Biswas, C.~Kumar, Neelima, G.~d. Reis, and C.~Reisinger}, {\em An
  explicit {M}ilstein-type scheme for interacting particle systems and
  {McK}ean--{V}lasov {SDE}s with common noise and non-differentiable drift
  coefficients}, to appear in the Annals of Applied Probability (preprint
  available at
  \href{https://arxiv.org/abs/2208.10052}{https://arxiv.org/2208.10052}),
  (2023).

\bibitem{brigo2007interest}
{\sc D.~Brigo and F.~Mercurio}, {\em {Interest Rate Models -- Theory and
  Practice}, second edition}, Springer, 2006.

\bibitem{Buckwar2022splitting}
{\sc E.~Buckwar, A.~Samson, M.~Tamborrino, and I.~Tubikanec}, {\em {A splitting
  method for SDEs with locally Lipschitz drift: Illustration on the
  FitzHugh-Nagumo model}}, Applied Numerical Mathematics, 179 (2022),
  pp.~191--220.

\bibitem{tubikanec2020uld}
{\sc E.~Buckwar, M.~Tamborrino, and I.~Tubikanec}, {\em {Spectral density-based
  and measure-preserving ABC for partially observed diffusion processes. An
  illustration on Hamiltonian SDEs}}, Statistics and Computing, 30 (2020),
  pp.~627--648.

\bibitem{Burrage2010SRK}
{\sc K.~Burrage and P.~M. Burrage}, {\em {Order Conditions of Stochastic
  Runge-Kutta Methods by B-Series}}, SIAM Journal on Numerical Analysis, 38
  (2010), pp.~922--952.

\bibitem{OBABOphysics}
{\sc G.~Bussi and M.~Parrinello}, {\em {Accurate sampling using Langevin
  dynamics}}, Physical Review E, 75 (2007).

\bibitem{cass2010densities}
{\sc T.~Cass and P.~Friz}, {\em {Densities for rough differential equations
  under H{\"o}rmander's condition}}, Annals of Mathematics,  (2010),
  pp.~2115--2141.

\bibitem{castellgaines1996logode}
{\sc F.~Castell and J.~Gaines}, {\em The ordinary differential equation
  approach to asymptotically efficient schemes for solution of stochastic
  differential equations}, Annales de l'Institut Henri Poincar\'{e},
  Probabilit\'{e}s et Statistiques, 32 (1996), pp.~231--250.

\bibitem{ULDFriction}
{\sc M.~Chak, N.~Kantas, T.~Leli{\`{e}}vre, and G.~A. Pavliotis}, {\em {Optimal
  friction matrix for underdamped Langevin sampling}}, ESAIM: Mathematical
  Modelling and Numerical Analysis, 57 (2023), pp.~3335--3371.

\bibitem{ChengMCMC}
{\sc X.~Cheng, N.~S. Chatterji, P.~L. Bartlett, and M.~I. Jordan}, {\em
  {Underdamped Langevin MCMC: A non-asymptotic analysis}}, {Proceedings of the
  31st Conference On Learning Theory, Volume 75 of Proceedings of Machine
  Learning Research},  (2018).

\bibitem{clark1982efficient}
{\sc J.~M.~C. Clark}, {\em {An efficient approximation scheme for a class of
  stochastic differential equations}}, in {Advances in Filtering and Optimal
  Stochastic Control}, Springer, 1982.

\bibitem{clark1980convergence}
{\sc J.~M.~C. Clark and R.~J. Cameron}, {\em {The maximum rate of convergence
  of discrete approximations for stochastic differential equations}}, in
  Stochastic Differential Systems Filtering and Control, ed. by Grigelionis
  (Springer, Berlin), 1980.

\bibitem{cir1985}
{\sc J.~C. Cox, J.~E. Ingersoll, and S.~A. Ross}, {\em {A Theory of the Term
  Structure of Interest Rates}}, {Econometrica}, 53 (1985), pp.~385--407.

\bibitem{cozma2020cir}
{\sc A.~Cozma and C.~Reisinger}, {\em {Strong order 1/2 convergence of full
  truncation Euler approximations to the Cox–Ingersoll–Ross process}}, {IMA
  Journal of Numerical Analysis}, 40 (2020), pp.~358--376.

\bibitem{KineticLangevinMCMC}
{\sc A.~S. Dalalyan and L.~Riou-Durand}, {\em {On sampling from a log-concave
  density using kinetic Langevin diffusions}}, Bernoulli, 26 (2020),
  pp.~1956--1988.

\bibitem{davie2014levyarea}
{\sc A.~Davie}, {\em K{MT} theory applied to approximations of {SDE}}, in
  Stochastic Analysis and Applications, vol.~100 of Springer Proceedings in
  Mathematics and Statistics, Springer, 2014, pp.~185--201.

\bibitem{MultilevelAcceleration}
{\sc K.~Debrabant and A.~R{\"{o}}{\ss}ler}, {\em {On the Acceleration of the
  Multi-Level Monte Carlo Method}}, Journal of Applied Probability, 52 (2018),
  pp.~307--322.

\bibitem{dereich2011cir}
{\sc S.~Dereich, A.~Neuenkirch, and L.~Szpruch}, {\em {An Euler-type method for
  the strong approximation of the Cox–Ingersoll–Ross process}},
  {Proceedings of the Royal Society A: Mathematical, Physical and Engineering
  Sciences}, 468 (2011), pp.~1105--1115.

\bibitem{dickinson2007optimal}
{\sc A.~S. Dickinson}, {\em {Optimal Approximation of the Second Iterated
  Integral of Brownian Motion}}, Stochastic Analysis and Applications, 25
  (2007), pp.~1109--1128.

\bibitem{elandt1961folded_normal}
{\sc R.~C. Elandt}, {\em {T}he {F}olded {N}ormal {D}istribution: {T}wo
  {M}ethods of {E}stimating {P}arameters from {M}oments}, Technometrics, 3
  (1961), pp.~551--562.

\bibitem{FangGiles2020Adaptive}
{\sc W.~Fang and M.~B. Giles}, {\em Adaptive {E}uler-{M}aruyama method for
  {SDE}s with nonglobally {L}ipschitz drift}, {Annals of Applied Probability},
  30 (2020), pp.~526--560.

\bibitem{foster2020thesis}
{\sc J.~Foster}, {\em Numerical approximations for stochastic differential
  equations}, PhD thesis, {University of Oxford}, 2020,
  \url{https://ora.ox.ac.uk/objects/uuid:775fc3f5-501c-425f-8b43-fc5a7b2e4310}.

\bibitem{foster2023convergence}
{\sc J.~Foster}, {\em {On the convergence of adaptive approximations for
  stochastic differential equations}},
  \href{https://arxiv.org/abs/2311.14201}{https://arxiv.org/abs/2311.14201},
  (2023).

\bibitem{foster2021levyarea}
{\sc J.~Foster and K.~Habermann}, {\em {Brownian bridge expansions for L\'{e}vy
  area approximations and particular values of the Riemann zeta function}},
  Combinatorics, Probability and\\ Computing,  (2022).

\bibitem{foster2019SLESplitting}
{\sc J.~Foster, T.~Lyons, and V.~Margarint}, {\em {An asymptotic radius of
  convergence for the Loewner equation and simulation of SLE traces via
  splitting}}, {Journal of Statistical Physics}, 189 (2022).

\bibitem{foster2020OptimalPolynomial}
{\sc J.~Foster, T.~Lyons, and H.~Oberhauser}, {\em {An Optimal Polynomial
  Approximation of Brownian Motion}}, SIAM Journal on Numerical Analysis, 58
  (2020), pp.~1393--1421.

\bibitem{foster2021shifted}
{\sc J.~Foster, T.~Lyons, and H.~Oberhauser}, {\em {The shifted ODE method for
  underdamped Langevin MCMC}},
  \href{https://arxiv.org/abs/2101.03446}{https://arxiv.org/abs/2101.03446},
  (2021).

\bibitem{friz2020course}
{\sc P.~K. Friz and M.~Hairer}, {\em {A Course on Rough Paths: With an
  Introduction to Regularity Structures}}, Springer, 2020.

\bibitem{friz2010multidimensional}
{\sc P.~K. Friz and N.~B. Victoir}, {\em {Multidimensional Stochastic Processes
  as Rough Paths: Theory and Applications}}, vol.~120, Cambridge University
  Press, 2010.

\bibitem{gaines1994levyarea}
{\sc J.~G. Gaines and T.~Lyons}, {\em {Random Generation of Stochastic Area
  Integrals}}, SIAM Journal on Applied Mathematics, 54 (1994), pp.~1132--1146.

\bibitem{MultilevelMilstein}
{\sc M.~B. Giles}, {\em {Improved multilevel Monte Carlo convergence using the
  Milstein scheme}}, in Monte Carlo and Quasi-Monte Carlo Methods, ed. by
  Keller, Heinrich and Niederreiter (Springer, Berlin), 2008.

\bibitem{Multilevel}
{\sc M.~B. Giles}, {\em {Multilevel Monte Carlo path simulation}}, Operations
  Research, 56 (2008), pp.~607--617.

\bibitem{MultilevelAntithetic}
{\sc M.~B. Giles and L.~Szpruch}, {\em {Antithetic multilevel Monte Carlo
  estimation for multi-dimensional SDEs without L\'{e}vy area simulation}},
  Annals of Applied Probability, 24 (2014), pp.~1585--1620.

\bibitem{hefter2019cir}
{\sc M.~Hefter and A.~Jentzen}, {\em {On arbitrarily slow convergence rates for
  strong numerical approximations of Cox-Ingersoll-Ross processes and squared
  Bessel processes}}, {Finance and Stochastics}, 23 (2019), pp.~139--172.

\bibitem{heston1993}
{\sc S.~L. Heston}, {\em {A Closed-Form Solution for Options with Stochastic
  Volatility with Applications to Bond and Currency Options}}, {The Review of
  Financial Studies}, 6 (1993), pp.~327--343.

\bibitem{OptimalMidpointMCMC}
{\sc Z.~Hu, F.~Huang, and H.~Huang}, {\em {Optimal Underdamped Langevin MCMC
  Method}}, {Advances in Neural Information Processing Systems},  (2021).

\bibitem{Hutzenthaler2012}
{\sc M.~Hutzenthaler, A.~Jentzen, and P.~E. Kloeden}, {\em {Strong convergence
  of an explicit numerical method for SDEs with nonglobally Lipschitz
  continuous coefficients}}, {Annals of Applied Probability}, 22 (2012),
  pp.~1611--1641.

\bibitem{iguchi2021splittingEM}
{\sc Y.~Iguchi and T.~Yamada}, {\em {Operator splitting around Euler-Maruyama
  scheme and high order discretization of heat kernels}}, {ESAIM: Mathematical
  Modelling and Numerical Analysis}, 55 (2021), pp.~323--367.

\bibitem{jelincic2023levygan}
{\sc A.~Jelin\v{c}i\v{c}, J.~Tao, W.~F. Turner, T.~Cass, J.~Foster, and H.~Ni},
  {\em {Generative Modelling of L\'evy Area for High Order SDE Simulation}},
  \href{https://arxiv.org/abs/2308.02452}{https://arxiv.org/abs/2308.02452},
  (2023).

\bibitem{kelly2022cir}
{\sc C.~Kelly, G.~Lord, and H.~Maulana}, {\em {The role of adaptivity in a
  numerical method for the Cox–Ingersoll–Ross model}}, {Journal of
  Computational and Applied Mathematics}, 410 (2022).

\bibitem{kidger2021thesis}
{\sc P.~Kidger}, {\em {On Neural Differential Equations}}, PhD thesis,
  {University of Oxford}, 2021,
  \url{https://ora.ox.ac.uk/objects/uuid:af32d844-df84-4fdc-824d-44bebc3d7aa9}.

\bibitem{kidger2021NSDEs2}
{\sc P.~Kidger, J.~Foster, X.~Li, and T.~Lyons}, {\em {Efficient and Accurate
  Gradients for Neural SDEs}}, {Advances in Neural Information Processing
  Systems},  (2021).

\bibitem{kidger2021NSDEs1}
{\sc P.~Kidger, J.~Foster, X.~Li, H.~Oberhauser, and T.~Lyons}, {\em {Neural
  SDEs as Infinite-Dimensional GANs}}, {Proceedings of 38th International
  Conference on Machine Learning},  (2021).

\bibitem{kidger2020NCDEs}
{\sc P.~Kidger, J.~Morrill, J.~Foster, and T.~Lyons}, {\em {Neural Controlled
  Differential Equations for Irregular Time Series}}, {Advances in Neural
  Information Processing Systems},  (2020).

\bibitem{kloeden1992numerical}
{\sc P.~E. Kloeden and E.~Platen}, {\em {Numerical Solution of Stochastic
  Differential Equations}}, Springer, Berlin, 1992.

\bibitem{KruseWu2019randomizedSPDE}
{\sc R.~Kruse and Y.~Wu}, {\em A randomized and fully discrete {G}alerkin
  finite element method for semilinear stochastic evolution equations},
  Mathematics of Computation, 88 (2019), pp.~2793--2825.

\bibitem{KruseWu2019RandomisedMilstein}
{\sc R.~Kruse and Y.~Wu}, {\em A randomized {M}ilstein method for stochastic
  differential equations with non-differentiable drift coefficients}, Discrete
  and Continuous Dynamical Systems. Series B. A Journal Bridging Mathematics
  and Sciences, 24 (2019), pp.~3475--3502.

\bibitem{leimkuhler2015ULD}
{\sc B.~Leimkuhler and C.~Matthews}, {\em {Molecular Dynamics: With
  Deterministic and Stochastic Numerical Methods}}, {Interdisciplinary Applied
  Mathematics, Springer}, 2015.

\bibitem{lelievre2016uld}
{\sc T.~Leli\`{e}vre and G.~Stoltz}, {\em {Partial differential equations and
  stochastic methods in molecular dynamics}}, Acta Numerica, 25 (2016),
  pp.~681--880.

\bibitem{leon2018fitzhugh}
{\sc J.~R. Le\'{o}n and A.~Samson}, {\em {Hypoelliptic stochastic
  FitzHugh-Nagumo neuronal model: mixing, up-crossing and estimation of the
  spike rate}}, {Annals of Applied Probability}, 28 (2018), pp.~2243--2274.

\bibitem{li2021sqrt}
{\sc R.~Li, H.~Zha, and M.~Tao}, {\em {Sqrt(d) dimension dependence of Langevin
  Monte Carlo}}, Proceedings of the 10th International Conference on Learning
  Representations,  (2022).

\bibitem{li2019LangevinMC}
{\sc X.~Li, D.~Wu, L.~Mackey, and M.~A. Erdogdu}, {\em {Stochastic Runge-Kutta
  Accelerates Langevin Monte Carlo and Beyond}}, {Advances in Neural
  Information Processing Systems},  (2019).

\bibitem{UCI}
{\sc M.~Lichman}, {\em {UCI machine learning repository}},
  \href{https://archive.ics.uci.edu/ml}{https://archive.ics.uci.edu/ml}, 2013.

\bibitem{lyons2007roughpaths}
{\sc T.~Lyons, M.~Caruana, and T.~L\'{e}vy}, {\em {Differential Equations
  Driven by Rough Paths}}, vol.~1908 of Lecture Notes in Mathematics.,
  Springer, 2007.

\bibitem{lyons2004cubature}
{\sc T.~Lyons and N.~Victoir}, {\em {Cubature on Wiener space}}, Proceedings of
  the Royal Society A: Mathematical, Physical and Engineering Sciences, 460
  (2004), pp.~169--198.

\bibitem{mao2007stochastic}
{\sc X.~Mao}, {\em {Stochastic Differential Equations and Applications}, second
  edition}, Elsevier, 2008.

\bibitem{milstein2015cir}
{\sc G.~N. Milstein and J.~Schoenmakers}, {\em {Uniform approximation of the
  Cox-Ingersoll-Ross process}}, {Advances in Applied Probability}, 47 (2015),
  pp.~1132--1156.

\bibitem{milstein2021physics}
{\sc G.~N. Milstein and M.~V. Tretyakov}, {\em {Stochastic Numerics for
  Mathematical Physics}, second edition}, Springer, 2021.

\bibitem{misawa2000numerical}
{\sc T.~Misawa}, {\em Numerical integration of stochastic differential
  equations by composition methods}, in Dynamical systems and differential
  geometry (Japanese), no.~1180, 2000, pp.~166--190.

\bibitem{OBABOtheory}
{\sc P.~Monmarch\'{e}}, {\em {High-dimensional MCMC with a standard splitting
  scheme for the underdamped Langevin diffusion}}, Electronic Journal of
  Statistics, 15 (2021), pp.~4117--4166.

\bibitem{morrill2022NCDEs}
{\sc J.~Morrill, P.~Kidger, L.~Yang, and T.~Lyons}, {\em {On the Choice of
  Interpolation Scheme for Neural CDEs}}, Transactions on Machine Learning
  Research,  (2022).

\bibitem{morrill2021NRDEs}
{\sc J.~Morrill, C.~Salvi, P.~Kidger, J.~Foster, and T.~Lyons}, {\em {Neural
  Rough Differential Equations for Long Time Series}}, {Proceedings of the 38th
  International Conference on Machine Learning},  (2021).

\bibitem{mrongowius2022levyarea}
{\sc J.~Mrongowius and A.~R{\"{o}}{\ss}ler}, {\em {On the approximation and
  simulation of iterated stochastic integrals and the corresponding Lévy areas
  in terms of a multidimensional Brownian motion}}, Stochastic Analysis and
  Applications, 40 (2022), pp.~397--425.

\bibitem{newton1991efficient}
{\sc N.~J. Newton}, {\em {Asymptotically Efficient Runge-Kutta Methods for a
  Class of It\^{o} and Stratonovich Equations}}, SIAM Journal on Applied
  Mathematics, 51 (1991), pp.~303--604.

\bibitem{ninomiya2009weak}
{\sc M.~Ninomiya and S.~Ninomiya}, {\em {A new higher-order weak approximation
  scheme for stochastic differential equations and the Runge--Kutta method}},
  {Finance and Stochastics}, 13 (2009), pp.~415--443.

\bibitem{NinomiyaVictoir}
{\sc S.~Ninomiya and N.~Victoir}, {\em {Weak Approximation of Stochastic
  Differential Equations and Application to Derivative Pricing}}, Applied
  Mathematical Finance, 15 (2008), pp.~107--121.

\bibitem{sahani2020wongzakai}
{\sc S.~Pathiraja}, {\em {L2 convergence of smooth approximations of stochastic
  differential equations with unbounded coefficients}}, {Stochastic Analysis
  and Applications},  (2023).

\bibitem{InvariantExists}
{\sc G.~A. Pavliotis}, {\em {Stochastic Processes and Applications}}, Springer,
  New York, 2014.

\bibitem{ralston1962}
{\sc A.~Ralston}, {\em {Runge-Kutta methods with minimum error bounds}},
  {Mathematics of Computation},  (1962), pp.~431--437.

\bibitem{Rossler2010SRK}
{\sc A.~R{\"{o}}{\ss}ler}, {\em {Runge–Kutta methods for the strong
  approximation of solutions of stochastic differential equations}}, SIAM
  Journal on Numerical Analysis, 48 (2010), pp.~922--952.

\bibitem{UBUSplitting}
{\sc J.~M. Sanz-Serna and K.~C. Zygalakis}, {\em {Wasserstein distance
  estimates for the distributions of numerical approximations to ergodic
  stochastic differential equations}}, Journal of Machine Learning Research, 22
  (2021).

\bibitem{MidpointMCMC}
{\sc R.~Shen and Y.~T. Lee}, {\em {The Randomized Midpoint Method for
  Log-Concave Sampling}}, {Advances in Neural Information Processing Systems},
  (2019).

\bibitem{shmatkov2005wongzakai}
{\sc A.~Shmatkov}, {\em Rate of Convergence of Wong-Zakai Approximations for
  SDEs and SPDEs}, PhD thesis, University of Edinburgh, 2005.

\bibitem{song2021scoredbased}
{\sc Y.~Song, J.~Sohl-Dickstein, D.~P. Kingma, A.~Kumar, S.~Ermon, and
  B.~Poole}, {\em {Score-Based Generative Modeling through Stochastic
  Differential Equations}}, {Proceedings of the International Conference on
  Learning Representations},  (2021).

\bibitem{OBABOmetropolis}
{\sc Z.~Song and T.~Zhiqiang}, {\em {Hamiltonian-Assisted Metropolis
  Sampling}}, Journal of the American Statistical Association, Theory and
  Methods,  (2021).

\bibitem{strang1968splitting}
{\sc G.~Strang}, {\em On the construction and comparison of difference
  schemes}, SIAM Journal on Numerical Analysis, 5 (1968), pp.~506--517.

\bibitem{strange2023thesis}
{\sc C.~Strange}, {\em {Path-based splitting methods for SDEs and machine
  learning for battery lifetime prognostics}}, PhD thesis, {University of
  Edinburgh}, 2023, \url{era.ed.ac.uk/handle/1842/41025}.

\bibitem{tubikanec2022igbm}
{\sc I.~Tubikanec, M.~Tamborrino, P.~Lansky, and E.~Buckwar}, {\em {Qualitative
  properties of different numerical methods for the inhomogeneous geometric
  Brownian motion}}, {Journal of Computational and Applied Mathematics}, 406
  (2022).

\bibitem{welling2011SGLD}
{\sc M.~Welling and Y.~W. Teh}, {\em {Bayesian Learning via Stochastic Gradient
  Langevin Dynamics}}, {Proceedings of the 28th International Conference on
  Machine Learning},  (2011).

\bibitem{wiktorsson2001levyarea}
{\sc M.~Wiktorsson}, {\em {Joint characteristic function and simultaneous
  simulation of iterated Itô integrals for multiple independent Brownian
  motions}}, Annals of Applied Probability, 11 (2001), pp.~470--487.

\bibitem{wongzakai1965}
{\sc E.~Wong and M.~Zakai}, {\em {On the Convergence of Ordinary Integrals to
  Stochastic Integrals}}, Annals of Mathematical Statistics, 36 (1965),
  pp.~1560--1564.

\bibitem{xiao2016scalar}
{\sc A.~Xiao and X.~Tang}, {\em {High strong order stochastic Runge-Kutta
  methods for Stratonovich stochastic differential equations with scalar
  noise}}, {Numerical Algorithms}, 72 (2016), pp.~259--296.

\bibitem{zhang2022sampling}
{\sc Q.~Zhang and Y.~Chen}, {\em {Path Integral Sampler: A Stochastic Control
  Approach For Sampling}}, {Proceedings of the International Conference on
  Learning Representations},  (2022).

\end{thebibliography}

\appendix

\section{Cancellation of integrals under commutativity condition}\label{append:integral_cancel}In this section, we present the calculations required to simplify the Taylor expansions of the SDE (\ref{eq:strat SDE}) and CDE (\ref{eq:intro_CDE}) when the commutativity condition (\ref{eq:comm_condition}) is satisfied.
To make these calculations easier, we identify iterated integrals with non-commutative polynomials and present the shuffle product, commonly used in rough path theory \cite{lyons2007roughpaths}.

\begin{definition}\label{def:shuffle_integral}
Let $\mathcal{A}_d$ denote the set of letters $\{0,1,\cdots, d\}$. We can identify linear combinations of iterated integrals with elements in $\R\langle\mathcal{A}_d\rangle$ by $I_e^W = I_e^\gamma = 1$ and
\begin{align*}
i_1\cdots\m i_m & \leftrightarrow I_{i_1\cdots\m i_m}^{W} :=
    \int_s^t \int_s^{r_1} \dots \int_s^{r_{m-1}} 
    \circ \, dW^{i_1}_{r_n} \circ dW^{i_2}_{r_{m-1}}\cdots \circ dW^{i_{m-1}}_{r_2} \circ dW^{i_m}_{r_1}\m,\\[-30pt]
 \end{align*}
 \begin{align*}
i_1\cdots\m i_m & \leftrightarrow I_{i_1\cdots\m i_m}^{\gamma} :=
    \int_s^t \int_s^{r_1} \dots \int_s^{r_{m-1}} 
     d\gamma^{i_1}_{r_m}\m  d\gamma^{i_2}_{r_{m-1}}\cdots \m d\gamma^{i_{m-1}}_{r_2}\m d\gamma^{i_m}_{r_1}\m,\\[3pt]
     \lambda u+\mu v & \leftrightarrow I_{\lambda u + \mu v} := \lambda I_u + \mu I_v\m,
     \end{align*}
for $m\geq 0$, $i_1\m, i_2\m,\cdots, i_m\in\mathcal{A}_d\m$, $u,v\in \mathcal{A}_d^\ast$ and $\lambda\m, \mu\in\R$.
\end{definition}\smallbreak
\begin{definition}\label{def:shuffle}
Suppose that $\mathcal{A}_d$ is a set containing $d$ letters and let $\R\langle\mathcal{A}_d\rangle$ be the corresponding space of non-commutative polynomials in $\mathcal{A}_d$ with real coefficients.
Then the \textbf{shuffle product} $\m\textshuffle :  \R\langle\mathcal{A}_d\rangle \times \R\langle\mathcal{A}_d\rangle \rightarrow \R\langle\mathcal{A}_d\rangle$ is the unique bilinear map such that
\begin{align*}
ua\shuffle\,vb & = (u\shuffle vb)\m a + (ua \shuffle v)\m b,\\[3pt]
u\shuffle e & = e \shuffle u = u,
\end{align*}
where $e$ denotes the empty letter.
\end{definition}
With this notation, we can link the shuffle project to the integration by parts formula. As a result, the shuffle project will allow us to expand products of iterated integrals.

\begin{theorem}[Integration by parts formula for integrals]\label{thm:shuffle}
For all $\m u,v\in\R\langle\mathcal{A}_d\rangle$, we have
\begin{align}\label{eq:shuffle_for_integrals}
I_{u}\cdot I_{v} = I_{u\subshuffle v}
\end{align}
\end{theorem}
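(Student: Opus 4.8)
The plan is to prove \eqref{eq:shuffle_for_integrals} by induction on the combined word length $|u| + |v|$, after first reducing to words (monomials) by bilinearity. Since $I_{\,\cdot\,}$ is linear by construction (Definition~\ref{def:shuffle_integral}) and $\shuffle$ is bilinear (Definition~\ref{def:shuffle}), it is enough to treat $u = i_1\cdots i_m$ and $v = j_1\cdots j_n$ with $i_k, j_l \in \mathcal{A}_d$. The base case is when one of the two words is empty: if $v = e$ then $I_v = 1$ and $u\shuffle e = u$, so both sides equal $I_u$, and symmetrically when $u = e$.

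For the inductive step I would write $u = u'a$ and $v = v'b$, where $a, b \in \mathcal{A}_d$ are the terminal letters, and let $I_u(\,\cdot\,)$ and $I_v(\,\cdot\,)$ denote the corresponding iterated integrals taken up to a variable upper limit $r \in [s,t]$, so that $dI_u(r) = I_{u'}(r)\, d\xi^a(r)$ and $dI_v(r) = I_{v'}(r)\, d\xi^b(r)$, where $\xi^0(r) := r$ and $\xi^i := W^i$ (in the Stratonovich sense) in the case of $I^W_{\,\cdot\,}$, and $\xi := \gamma$ in the case of $I^\gamma_{\,\cdot\,}$. The first key step is the product rule (using that $I_u(s) = I_v(s) = 0$ when $u,v$ are nonempty)
\begin{equation*}
I_u(t)\, I_v(t) = \int_s^t I_u(r)\, dI_v(r) + \int_s^t I_v(r)\, dI_u(r),
\end{equation*}
which is classical Riemann--Stieltjes integration by parts for the bounded-variation path $\gamma$ and, in the Stratonovich setting, holds because Stratonovich integration obeys the ordinary Leibniz rule — this is precisely why Stratonovich rather than It\^o calculus is used here (cf.~the remark after Proposition~\ref{prop:path taylor expansion}). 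Substituting the differentials turns this into
\begin{equation*}
I_u(t)\, I_v(t) = \int_s^t I_u(r)\, I_{v'}(r)\, d\xi^b(r) + \int_s^t I_{u'}(r)\, I_v(r)\, d\xi^a(r).
\end{equation*}

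Next I would apply the induction hypothesis pointwise in $r$: because $|u| + |v'|$ and $|u'| + |v|$ are both strictly less than $|u| + |v|$, we have $I_u(r)\, I_{v'}(r) = I_{u\shuffle v'}(r)$ and $I_{u'}(r)\, I_v(r) = I_{u'\shuffle v}(r)$ for every $r$. The two integrals above therefore collapse to $I_{(u\shuffle v')\,b}$ and $I_{(u'\shuffle v)\,a}$ respectively, and by linearity their sum is $I_{(u\shuffle v')b \,+\, (u'\shuffle v)a}$. By the recursive defining relation of the shuffle product, $u'a \shuffle v'b = (u'a\shuffle v')b + (u'\shuffle v'b)a = (u\shuffle v')b + (u'\shuffle v)a$, which is exactly the element appearing in the subscript; this closes the induction. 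The main difficulty here is analytic rather than combinatorial: one must ensure that the product rule genuinely holds and that all iterated integrals encountered are well defined. For $\gamma$ this is immediate, since $\gamma$ is piecewise linear of finite length and Riemann--Stieltjes theory applies verbatim; for $I^W_{\,\cdot\,}$ one relies on Stratonovich integrals of sufficiently regular integrands satisfying the usual chain and product rules. Perhaps most cleanly, one may observe that \eqref{eq:shuffle_for_integrals} is an identity between linear combinations of iterated integrals of a \emph{path}, prove it by the elementary calculus argument above for smooth driving paths, and then transfer it to the Stratonovich integrals of $(t, W_t)$ by a Wong--Zakai-type approximation.
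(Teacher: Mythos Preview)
Your proposal is correct and follows essentially the same route as the paper: induction on combined word length, base case $I_e=1$, inductive step via the Stratonovich/Riemann--Stieltjes product rule applied to $I_{u'a}\cdot I_{v'b}$, then the recursive definition of $\shuffle$ to close. The paper's proof is terser and does not dwell on the analytic justification or the Wong--Zakai alternative you mention, but the argument is the same.
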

\begin{proof}
It is clear that the identity (\ref{eq:shuffle_for_integrals}) holds when $u=e$ or $v=e$ since $I_e=1$.
Suppose that (\ref{eq:shuffle_for_integrals}) holds for all words $u,v\in\mathcal{A}_d^\ast$ with a combined length less than $m$. Then for words $u,v\in\mathcal{A}_d^\ast\m$ and letters $a,b\in\mathcal{A}_d\m$ such that $|ua| + |vb| = m$, we have
\begin{align*}
I_{ua}^W\cdot I_{vb}^W & = \int_s^t I_u^W(r) \circ dW_r^{a}\int_s^t I_v^W(r)\circ dW_r^{b}\\
& = \int_s^t\bigg(\int_s^{r_1} I_u^W(r_2)\circ dW_{r_2}^{a}\bigg)\circ d\bigg(\int_s^{r_1} I_v^W(r_2)\circ dW_{r_2}^{b}\bigg)\\
&\mm + \int_s^t\bigg(\int_s^{r_1} I_v^W(r_2)\circ dW_{r_2}^{b}\bigg)\circ d\bigg(\int_s^{r_1} I_u^W(r_2)\circ dW_{r_2}^{a}\bigg)\\
& = \int_s^t I_{ua}^W(r_1)\m I_v^W(r_1)\circ dW_{r_1}^{b} + \int_s^t I_{vb}^W(r_1)\m I_u^W(r_1)\circ dW_{r_1}^{a}\\[3pt]
& = I_{(ua \subshuffle v)\m b + (u\subshuffle vb)\m a}^W\m,
\end{align*}
where the second line uses integration by parts (which holds for Stratonovich integrals) and the last line uses the induction hypothesis. The result now follows by linearity.
The same argument gives (\ref{eq:shuffle_for_integrals}) for iterated integrals with respect to the path $\gamma\m$.
\end{proof}

Using Theorem \ref{thm:shuffle}, it will be straightforward to rewrite products of integrals as linear combinations of (high order) integrals. In addition, it shall enable us to establish decompositions of iterated integrals into symmetric and antisymmetric components.

\begin{theorem}[Symmetric and antisymmetric components of iterated integrals]
\label{thm:symmetric_antisymmetric}
Let the Lie bracket $\m[\,\cdot\m,\m\cdot\m] : \R\langle\mathcal{A}_d\rangle\times \R\langle\mathcal{A}_d\rangle\rightarrow \R\langle\mathcal{A}_d\rangle$ be the unique bilinear map with
\begin{align}\label{eq:lie_bracket}
[u, v] = uv - vu\m,
\end{align}
for words $u,v\in \mathcal{A}_d^\ast$. Then, adopting the notation of Definition \ref{def:shuffle_integral} and Theorem \ref{thm:shuffle}, we have
\begin{align}
I_{ij} & = \frac{1}{2}I_i\cdot I_j + \frac{1}{2}I_{[i,j]}\m,\label{eq:ij_symmetric_antisymmetric}\\[3pt]
I_{ijk} & = \frac{1}{6}I_i\cdot I_j\cdot I_k 
    + \frac{1}{4} I_i\cdot I_{[j,k]} 
    + \frac{1}{4} I_{[i,j]}\cdot I_k 
    + \frac{1}{6} I_{[[i,j], k]} 
    + \frac{1}{6} I_{[i,[j,k]]} \m ,\label{eq:ijk_symmetric_antisymmetric}\\[3pt]
I_{ijkl} & =  \frac{1}{24} I_i\cdot I_j\cdot I_k\cdot I_l 
    + \frac{1}{12} I_i\cdot I_{[j,[k,l]]} 
    + \frac{1}{12} I_i\cdot I_{[[j,k],l]}
    + \frac{1}{12} I_{[i,[j,k]]}\cdot I_l \label{eq:ijkl_symmetric_antisymmetric}\\[2pt]
    &\mm + \frac{1}{12} I_{[[i,j],k]}\cdot I_l  + \frac{1}{12} I_i\cdot I_j\cdot I_{[k,l]}
    + \frac{1}{12} I_i\cdot I_{[j,k]}\cdot I_l + \frac{1}{12} I_{[i,j]}\cdot I_k\cdot I_l\nonumber
    \\[2pt]
    &\mm  + \frac{1}{8} I_{[i,j]}\cdot I_{[k,l]} + \frac{1}{12} I_{[i,[j,[k,l]]]}
    + \frac{1}{12} I_{[[i,[j,k]], l]} 
    + \frac{1}{12} I_{[[[i,j],k], l]} \nonumber
    \\[2pt]
    &\mm + \frac{1}{12}\big( I_{kjli} - I_{kjil} + I_{lijk} - I_{iljk} \big) 
    + \frac{1}{12}\big( I_{jilk} - I_{kilj} + I_{jlik} - I_{klij} \big)\m, \nonumber
\end{align}
for $i,j,k,l\in\mathcal{A}_d\m$.
\end{theorem}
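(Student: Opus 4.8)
The plan is to prove all three identities first as polynomial identities in the free associative algebra $\R\langle\mathcal{A}_d\rangle$ and then transport them to iterated integrals. The point is that the correspondence $w\mapsto I_w$ of Definition \ref{def:shuffle_integral} is linear, by Theorem \ref{thm:shuffle} it carries the shuffle product to the ordinary product of integrals, $I_{u\shuffle v} = I_u\cdot I_v$, and by (\ref{eq:lie_bracket}) together with linearity it carries a bracket of words to $I_{[u,v]} = I_{uv} - I_{vu}$. Hence, to obtain (\ref{eq:ij_symmetric_antisymmetric})--(\ref{eq:ijkl_symmetric_antisymmetric}) it suffices to establish, for letters $i,j,k,l\in\mathcal{A}_d$,
\begin{align*}
ij &= \tfrac12\,(i\shuffle j) + \tfrac12\,[i,j],\\
ijk &= \tfrac16\,(i\shuffle j\shuffle k) + \tfrac14\,(i\shuffle [j,k]) + \tfrac14\,([i,j]\shuffle k) + \tfrac16\,[[i,j],k] + \tfrac16\,[i,[j,k]],
\end{align*}
and the analogous degree-four identity read off from (\ref{eq:ijkl_symmetric_antisymmetric}). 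Applying $w\mapsto I_w$ then yields the theorem simultaneously for $I^W$ and $I^\gamma$.

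I would verify each of these polynomial identities by fully expanding both sides into linear combinations of monomials. Using the recursion of Definition \ref{def:shuffle}, a shuffle of single letters is the sum over all interleavings, so $i\shuffle j = ij + ji$, the product $i\shuffle j\shuffle k$ is the sum of the six permutations of $ijk$, $i\shuffle[j,k] = (ijk+jik+jki) - (ikj+kij+kji)$, $[i,j]\shuffle k = (ijk+ikj+kij) - (jik+jki+kji)$, and so on; every Lie bracket is expanded via (\ref{eq:lie_bracket}), for example $[[i,j],k] = ijk - jik - kij + kji$ and $[i,[j,k]] = ijk - ikj - jki + kji$. It then remains to check that, on the right-hand side, the monomial $ijk$ (resp.\ $ijkl$) occurs with coefficient $1$ and every other permutation with coefficient $0$; in the cubic case this is immediate, since the coefficients of $ijk$ sum to $\tfrac16+\tfrac14+\tfrac14+\tfrac16+\tfrac16 = 1$, while, for instance, the coefficient of $kji$ is $\tfrac16-\tfrac14-\tfrac14+\tfrac16+\tfrac16 = 0$. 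The degree-two and degree-three computations are short; the degree-four identity amounts to the same bookkeeping carried out over the $24$ permutations of $i,j,k,l$, the partial shuffles $i\shuffle jkl$, $ij\shuffle kl$, $ij\shuffle k$ and $jk\shuffle l$ appearing inside the mixed terms of (\ref{eq:ijkl_symmetric_antisymmetric}), and the iterated brackets up to length four.

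The main obstacle is purely combinatorial, namely the degree-four verification. Unlike in degrees two and three, the length-four brackets $[[[i,j],k],l]$, $[[i,[j,k]],l]$, $[i,[j,[k,l]]]$ together with the shuffle products of lower-order Lie elements do not account for every monomial on their own, which is precisely why (\ref{eq:ijkl_symmetric_antisymmetric}) also carries the residual terms $\tfrac{1}{12}\big(I_{kjli}-I_{kjil}+I_{lijk}-I_{iljk}\big)$ and $\tfrac{1}{12}\big(I_{jilk}-I_{kilj}+I_{jlik}-I_{klij}\big)$. Consequently one cannot simply invoke the Poincar\'e--Birkhoff--Witt (shuffle-polynomial) structure of $\R\langle\mathcal{A}_d\rangle$ and conclude abstractly; the specific combination on the right has to be checked directly. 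A practical way to keep the accounting manageable is to telescope the residual sums through adjacent-letter differences, for example $I_{kjli}-I_{kjil} = I_{kj[l,i]}$ and $I_{lijk}-I_{iljk} = I_{[l,i]jk}$, which reduces the number of independent monomials one has to track. Once the polynomial identities are confirmed, the theorem follows immediately by applying the homomorphism $w\mapsto I_w$, which by Theorem \ref{thm:shuffle} is equally valid for iterated integrals against $W$ (Stratonovich) and against $\gamma$.
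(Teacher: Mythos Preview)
Your proposal is correct and follows essentially the same approach as the paper: expand the Lie brackets via (\ref{eq:lie_bracket}) and the products via the shuffle relation of Theorem \ref{thm:shuffle}, then check equality of the resulting linear combinations of words. The paper states this in a single sentence, whereas you spell out the monomial bookkeeping (and the reason the residual length-four terms in (\ref{eq:ijkl_symmetric_antisymmetric}) are necessary) more explicitly; but the underlying verification is identical.
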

\begin{proof}
    The results follow by expanding the Lie brackets $[\,\cdot\m,\m\cdot\m]$ on the right-hand sides using (\ref{eq:lie_bracket}) and applying the integration by parts formula via Theorem \ref{eq:shuffle_for_integrals}.
\end{proof}\smallbreak

Hence, in order to simplify the Taylor expansions of (\ref{eq:strat SDE}) and (\ref{eq:intro_CDE}), we will need to find symmetries in the vector field derivatives that will cause the antisymmetric parts in the iterated integrals to cancel out. To this end, we give the following lemma, which will set up the notation used in the main result of the section (Theorem \ref{thm:g_symmetries}).

\begin{lemma}[Codomains of vector field derivatives]
Given a sufficiently smooth vector field $\m f : \R^e\rightarrow\R^e$, its Fr\'{e}chet derivatives will map between the following spaces:
\begin{align*}
f^{\m\prime} & : \R^e \rightarrow L(\R^e, \R^e),\\
f^{\m\prime\prime} & : \R^e \rightarrow L(\R^e, L(\R^e, \R^e)),\\
f^{\m\prime\prime\prime} & : \R^e \rightarrow L(\R^e, L(\R^e, L(\R^e, \R^e))),
\end{align*}
where $L(U,V)$ denotes the space of linear maps between the vector spaces $U$ and $V$.
Equivalently, we can view $f^{(k)}(y)$ as a $k$-linear map on $\R^e$ for each $y\in\R^e$. That is, 
\begin{align*}
f^{\m\prime} & : \R^e \rightarrow L(\R^e, \R^e),\\
f^{\m\prime\prime} & : \R^e \rightarrow L\big((\R^e)^{\otimes 2}, \R^e\big),\\
f^{\m\prime\prime\prime} & : \R^e \rightarrow L\big((\R^e)^{\otimes 3}, \R^e\big).
\end{align*}
\end{lemma}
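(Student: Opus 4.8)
The plan is to prove both displays purely by unwinding the definition of the Fr\'echet derivative and then invoking one standard fact from multilinear algebra, proceeding by induction on the order of differentiation.

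First I would recall that if $\Phi:\R^e\to V$ is Fr\'echet differentiable at $y$, where $V$ is any normed vector space, then $\Phi'(y)$ is by definition the unique element of $L(\R^e,V)$ satisfying $\Phi(y+h)=\Phi(y)+\Phi'(y)h+o(\|h\|)$ as $h\to 0$; hence $\Phi'$ is again a map into a normed vector space, namely $L(\R^e,V)$ equipped with the operator norm, and the procedure can be iterated. Taking $\Phi=f$ and $V=\R^e$ gives $f':\R^e\to L(\R^e,\R^e)$; taking $\Phi=f'$ and $V=L(\R^e,\R^e)$ gives $f'':\R^e\to L(\R^e,L(\R^e,\R^e))$; once more gives $f''':\R^e\to L(\R^e,L(\R^e,L(\R^e,\R^e)))$; and an evident induction yields $f^{(k)}:\R^e\to L(\R^e,\cdots,L(\R^e,\R^e))$ with $k$ nested copies of $L(\R^e,\,\cdot\,)$. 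This is the first display.

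For the second display I would apply the canonical currying (tensor--hom) isomorphism $L(U,L(V,W))\cong L(U\otimes V,W)$, valid for finite-dimensional spaces, which sends $A$ to the bilinear map $(u,v)\mapsto(Au)v$ and, by the universal property of $\otimes$, identifies that bilinear map with a linear map on $U\otimes V$. Applying this isomorphism $k-1$ times collapses the $k$-fold nesting of $L(\R^e,\,\cdot\,)$ into $L((\R^e)^{\otimes k},\R^e)$, so that $f^{(k)}(y)$ may be regarded as a $k$-linear map on $\R^e$; in particular $f'(y)\in L(\R^e,\R^e)$, $f''(y)\in L((\R^e)^{\otimes 2},\R^e)$ and $f'''(y)\in L((\R^e)^{\otimes 3},\R^e)$, as claimed. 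Although not part of the statement, I would also record for use elsewhere in the paper that when $f\in\mathcal{C}^k$ the equality of mixed partial derivatives (Schwarz's theorem) makes each $f^{(k)}(y)$ a \emph{symmetric} $k$-linear map.

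There is no real obstacle here: the lemma is essentially a reformulation of the definition of higher-order Fr\'echet derivatives together with the identification $L(U,L(V,W))\cong L(U\otimes V,W)$. The only point deserving any care is to fix, at each stage of the induction, the normed-space structure (the operator norm, all spaces being finite-dimensional and hence Banach) that legitimises iterating the differentiation.
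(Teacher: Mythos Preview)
Your proposal is correct and takes essentially the same approach as the paper, which simply states that the result follows immediately from the definition of the Fr\'echet derivative. You have merely spelled out in detail what the paper leaves implicit: the iterative application of the definition to obtain the nested form, and the currying isomorphism to pass to the multilinear form.
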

\begin{proof}
The result follows immediately from the definition of Fr\'{e}chet derivative.
\end{proof}\smallbreak

We now turn to the main result of the section, which shows that certain terms in the Taylor expansions of (\ref{eq:strat SDE}) and (\ref{eq:intro_CDE}) simplify under the commutativity condition. We note that the CDE \eqref{eq:intro_CDE} can be written in the equivalent form 
\begin{align*}
    \hspace{2.5mm}dy_r^\gamma = f(y_r^\gamma)\m d\gamma^\tau(r) + \sum_{i=1}^d g_i(y_r^\gamma)\m d (\gamma^{\m\omega}(r))^i.
\end{align*}

\begin{theorem}\label{thm:g_symmetries}
Suppose that the following commutativity condition holds
\begin{align}\label{eq:comm_condition_appendix}
\hspace{2.5mm}g_i^{\m\prime}(y)g_j(y) = g_j^{\m\prime}(y)g_i(y),\hspace{2.5mm}\forall y\in\R^e,
\end{align}
for $i,j\in\{1,\cdots,d\}$. Then, the terms in the SDE (or CDE) Taylor expansions become
\begin{align}
\sum_{i,j=1}^d g_i^{\m\prime}(y)g_j(y) I_{ji} = \frac{1}{2}\sum_{i,j=1}^d g_i^{\m\prime}(y)g_j(y)\big( I_i\cdot I_j\big),\hspace{47mm}\label{eq:level2_simplify}\\
\hspace{0.5mm}\sum_{i,j,k=1}^d\hspace*{-0.75mm}\big(g_i^{\m\prime\prime}(y)\big(g_j(y), g_k(y)\big) + g_i^{\m\prime}(y)g_j^{\m\prime}(y)g_k(y)\big) I_{kji} = \frac{1}{6}\sum_{i,j,k=1}^d\hspace*{-0.75mm}\big(\cdots\big)\big(I_i\cdot I_j\cdot I_k\big),\label{eq:level3_simplify}\\
\sum_{i,j,k,l=1}^d \Big(\m g_i^{\m\prime\prime\prime}(y)\big(g_j(y), g_k(y), g_l(y)\big) + g_i^{\m\prime\prime}(y)\big(g_j^{\m\prime}(y)g_l(y), g_k(y)\big)\hspace{25.5mm}\label{eq:level4_simplify}\\[-8pt]
 +\,\,\, g_i^{\m\prime\prime}(y)\big(g_j^{\m\prime}(y)g_k(y), g_l(y)\big) + g_i^{\m\prime\prime}(y)\big(g_j(y), g_k^{\m\prime}(y)g_l(y)\big)\hspace{20mm} \nonumber\\
+\,\,\, g_i^{\m\prime}(y)g_j^{\m\prime\prime}(y)\big(g_k(y), g_l(y)\big) + g_i^{\m\prime}(y)g_j^{\m\prime}(y)g_k^{\m\prime}(y)g_l(y)\Big) I_{lkji}\hspace{17mm}\nonumber\\[-3pt]
 = \frac{1}{24}\sum_{i,j,k=1}^d \big(\cdots\big)\big( I_i\cdot I_j\cdot I_k\cdot I_l\big),\hspace{37.5mm}\nonumber
\end{align}
where $(\m\cdots)$ denote the same sum of vector field derivatives as on the left-hand sides.
\end{theorem}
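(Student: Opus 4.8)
The plan is to reduce all three identities to two ingredients: a purely algebraic \emph{full symmetry} of the relevant vector-field coefficients in their noise indices, and the shuffle identity of Theorem~\ref{thm:shuffle}, which lets a symmetrised sum of iterated integrals be rewritten as a product of first-level integrals.

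\textbf{Step 1: symmetry of the coefficients.} Let $\Theta_{ij}$, $\Theta_{ijk}$ and $\Theta_{ijkl}$ denote the coefficients multiplying $I_{ji}$, $I_{kji}$ and $I_{lkji}$ on the left-hand sides of \eqref{eq:level2_simplify}, \eqref{eq:level3_simplify} and \eqref{eq:level4_simplify}. I would show that each $\Theta$ is invariant under arbitrary permutations of its indices. For $\Theta_{ij}=g_i'g_j$ this is exactly the commutativity condition \eqref{eq:comm_condition_appendix}. Differentiating \eqref{eq:comm_condition_appendix} once in a direction $v$ yields
\[
g_i''(v,g_j)+g_i'g_j'v=g_j''(v,g_i)+g_j'g_i'v ,
\]
and differentiating again in a direction $w$ yields a further identity relating the $g'''$, $g''$ and $g'g''$ terms. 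For $\Theta_{ijk}=g_i''(g_j,g_k)+g_i'g_j'g_k$: invariance under the transposition $(j\,k)$ follows from the symmetry of the bilinear map $g_i''$ together with \eqref{eq:comm_condition_appendix} applied to $g_k'g_j=g_j'g_k$; invariance under $(i\,j)$ follows by setting $v=g_k$ in the first-derivative identity above. Since $(i\,j)$ and $(j\,k)$ generate $S_3$, $\Theta_{ijk}$ is fully symmetric. For the six-term coefficient $\Theta_{ijkl}$ the same recipe applies: invariance under $(k\,l)$ and $(j\,k)$ uses only \eqref{eq:comm_condition_appendix} and the first-derivative identity, while invariance under $(i\,j)$ additionally uses the second-derivative identity — substituting it for the $g_i'''(g_j,g_k,g_l)$ term and regrouping (the $g''$ and $g'g''$ pieces pair off by symmetry of $g''$, and the quartic $g'g'g'g$ term combines with a $g'g''$ term via the first-derivative identity) converts $\Theta_{ijkl}$ into $\Theta_{jikl}$. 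As $(i\,j),(j\,k),(k\,l)$ generate $S_4$, $\Theta_{ijkl}$ is fully symmetric.

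\textbf{Step 2: symmetrisation and shuffle.} If $C$ is any array indexed by $n$ noise letters that is invariant under $S_n$, then relabelling the summation indices gives $\sum_{\mathbf i}C_{\mathbf i}\,I_{i_{\pi(1)}\cdots i_{\pi(n)}}=\sum_{\mathbf i}C_{\mathbf i}\,I_{i_1\cdots i_n}$ for every $\pi\in S_n$. Averaging over $\pi$ and using Theorem~\ref{thm:shuffle} in the form $\sum_{\pi\in S_n}I_{i_{\pi(1)}\cdots i_{\pi(n)}}=I_{i_1}\cdot I_{i_2}\cdots I_{i_n}$ (the iterated shuffle of $n$ single letters) gives
\[
\sum_{\mathbf i}C_{\mathbf i}\,I_{i_n\cdots i_1}=\sum_{\mathbf i}C_{\mathbf i}\,I_{i_1\cdots i_n}=\frac{1}{n!}\sum_{\mathbf i}C_{\mathbf i}\,\big(I_{i_1}\cdot I_{i_2}\cdots I_{i_n}\big).
\]
Applying this with $C=\Theta$ and $n=2,3,4$ yields \eqref{eq:level2_simplify}, \eqref{eq:level3_simplify} and \eqref{eq:level4_simplify} respectively. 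An equivalent route is to substitute the explicit decompositions \eqref{eq:ij_symmetric_antisymmetric}--\eqref{eq:ijkl_symmetric_antisymmetric} of Theorem~\ref{thm:symmetric_antisymmetric}: the fully symmetric term $\frac{1}{n!}I_{i_1}\cdots I_{i_n}$ reproduces the right-hand side, and every remaining term carries a Lie bracket (or, in \eqref{eq:ijkl_symmetric_antisymmetric}, an antisymmetrised combination of words) that vanishes on contraction with the symmetric coefficient $\Theta$.

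\textbf{Main obstacle.} The technical heart is the $(i\,j)$-invariance of $\Theta_{ijkl}$: the second derivative of \eqref{eq:comm_condition_appendix} is a lengthy identity, and verifying that its substitution into the six-term expression cancels down exactly to $\Theta_{jikl}$ is a careful bookkeeping exercise. The lower-order symmetries and the relabelling/shuffle argument of Step 2 are routine.
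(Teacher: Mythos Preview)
Your proposal is correct and follows essentially the same approach as the paper: both establish full symmetry of the vector-field coefficients by differentiating the commutativity condition and checking invariance under adjacent transpositions, then exploit this symmetry against the iterated integrals. Your Step~2 shuffle/relabelling argument is a slight streamlining of the paper's route (which instead substitutes the explicit decompositions of Theorem~\ref{thm:symmetric_antisymmetric} and checks that each bracket term vanishes), but you already note this equivalence yourself.
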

\begin{proof}
For any multi-index $(i_1, \cdots, i_n)$ with $n\geq 2$, we introduce the notation
\begin{align*}
g_{i_1, \cdots,\m i_n}(y) := g_{i_1, \cdots,\m i_{n-1}}^\prime(y) g_{i_n}(y).
\end{align*}
By the product rule, it is then straightforward to see that $g_{ij}(y)$, $g_{ijk}(y)$ and $g_{ijkl}(y)$ are precisely the vector field derivatives appearing in equations (\ref{eq:level2_simplify}), (\ref{eq:level3_simplify}) and (\ref{eq:level4_simplify}).\smallbreak

We will first prove that $g_{i_1, \cdots,\m i_n}$ is unchanged if the indices $i_1, \cdots, i_n$ are permuted.
This clearly follows by the commutativity condition (\ref{eq:comm_condition_appendix}) when $n=2$ and is trivially the case when $n=1$. To establish this invariance for $n\geq 3$, we proceed by induction:\smallbreak
For $k < n$, we assume $g_{i_1, \cdots,\m i_k}$ is unchanged when indices $i_1, \cdots, i_k$ are permuted. Then the same can be said for any derivative of $g_{i_1, \cdots,\m i_k}$. By the definition of $g_{i_1, \cdots,\m i_n}$,
\begin{align*}
g_{i_1, \cdots,\m i_n}(y) = \underbrace{g_{i_1, \cdots,\m i_{n-1}}^\prime(y)}_{\substack{\text{permutation}\\ \text{invariant}}} g_{i_n}(y).
\end{align*}
From the induction hypothesis, it follows that $g_{i_1, \cdots,\m i_n}$ is invariant to permutations in $i_1, \cdots, i_{n-1}\m$. Moreover, applying the definition of $g_{i_1, \cdots,\m i_{n-1}}$ and product rule yields:
\begin{align*}
g_{i_1, \cdots,\m i_n}(y)& = g_{i_1, \cdots,\m i_{n-1}}^\prime(y)\m g_{i_n}(y)\\
&= \big(g_{i_1, \cdots,\m i_{n-2}}^\prime(y) g_{i_{n-1}}(y)\big)^\prime(y)\m g_{i_n}(y)\\
& = \underbrace{g_{i_1, \cdots,\m i_{n-2}}^{\prime\prime}(y)}_{\text{symmetric and bilinear}}\hspace{-5mm}\big(g_{i_{n-1}}(y),\, g_{i_n}(y)\big) + g_{i_1, \cdots,\m i_{n-2}}^\prime(y) \underbrace{g_{i_{n-1}}^\prime(y)\m g_{i_n}(y)}_{=\m g_{i_n}^\prime(y)\m g_{i_{n-1}}(y)}.
\end{align*}
So by the symmetry of the bilinear map $g_{i_1, \cdots,\m i_{n-2}}^{\prime\prime}(y)$ and the commutativity of $g$ (as indicated above), we see that $g_{i_1, \cdots,\m i_n}$ is unchanged if $i_{n-1}$ and $i_n$ are swapped.
Therefore, the permutations invariance of $g_{i_1, \cdots,\m i_n}$ for $n\geq 3$ now follows by induction.\smallbreak

In particular, the vector field terms in (\ref{eq:level2_simplify}), (\ref{eq:level3_simplify}) and (\ref{eq:level4_simplify}) will have symmetries in their indices. As a consequence, all the antisymmetric terms in the decompositions of $I_{ji}$, $I_{kji}$ and $I_{lkji}$ (given by Theorem \ref{thm:symmetric_antisymmetric}) will cancel out in their respective sums. Thus,
only ``symmetric parts'' of integrals contribute to the sums (\ref{eq:level2_simplify}), (\ref{eq:level3_simplify}), (\ref{eq:level4_simplify}), and the result follows from the decompositions (\ref{eq:ij_symmetric_antisymmetric}), (\ref{eq:ijk_symmetric_antisymmetric}), (\ref{eq:ijkl_symmetric_antisymmetric}) in Theorem \ref{thm:symmetric_antisymmetric}.
\end{proof}

\section{Unbiased approximation of high order iterated integrals}\label{append:integral_approx}In this section, we derive estimators for certain iterated stochastic integrals using a polynomial expansion of Brownian motion \cite{foster2020OptimalPolynomial}. We use this expansion since its first two coefficients give the path's increment and space-time L\'{e}vy area (Definition \ref{def:st_levyarea}). Just as in \cite{foster2020OptimalPolynomial}, the integral that we would primarily like to approximate is the so-called ``space-space-time'' L\'{e}vy area, which we define below. We note that a preliminary version of the results in this section were first presented in the doctoral thesis \cite{foster2020thesis}.

\begin{definition}\label{def:sst_levy_area} Over an interval $[s,t]$, the \textbf{space-space-time L\'{e}vy area} $L_{s,t}$ of a standard Brownian motion is defined as
\begin{align*}
L_{s,t} & := \frac{1}{6}\bigg(\int_s^t\int_s^u\int_s^v \circ\, dW_r\circ dW_v\,du - 2\int_s^t\int_s^u\int_s^v \circ\, dW_r\,dv\circ dW_u\\
&\hspace{15mm} + \int_s^t\int_s^u\int_s^v dr\circ dW_v\circ dW_u\bigg).
\end{align*}
\end{definition}
\begin{remark}
Along with the path increment $W_{s,t}\m$, the L\'{e}vy areas $H_{s,t}$ and $L_{s,t}$ are sufficient to construct the iterated integrals appearing in the stochastic Taylor expansion (\ref{eq:strat_taylor_exp}), up to order 2 for SDEs satisfying the commutativity condition (\ref{eq:comm_condition}).
\end{remark}\smallbreak

The key difference between the integral estimators defined in this section and those derived in \cite{foster2020OptimalPolynomial}, is that we shall additionally use the following random variable.

\begin{definition}\label{def:st_swing} The \textbf{space-time L\'{e}vy swing\footnote{\textbf{s}ide \textbf{w}ith \textbf{in}tegral \textbf{g}reater.}} of Brownian motion over $[s,t]$ is defined as
\begin{align*}
n_{s,t} := \sgn\big(H_{s,u} - H_{u,t}\big),
\end{align*}
where $u:= \frac{1}{2}(s+t)$ is the interval's midpoint.
\end{definition}\vspace{-2mm}
\begin{figure}[H] \label{fig:levy_swing}
\centering
\includegraphics[width=0.85\textwidth]{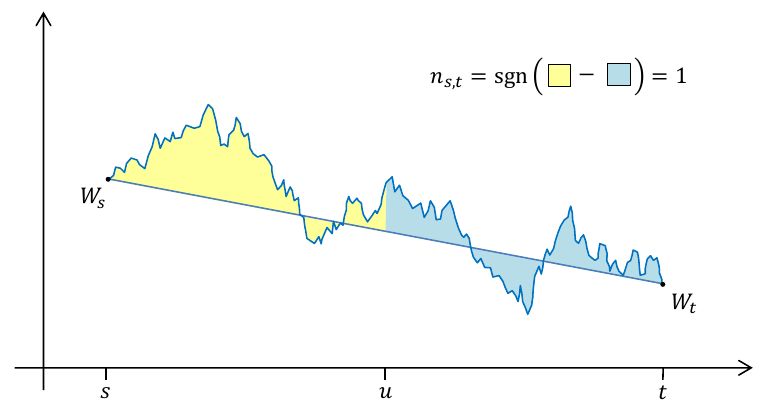}
\caption{Space-time L\'{e}vy swing gives the side where the path has greater space-time L\'{e}vy area.}
\end{figure}

Similar to \cite{foster2020OptimalPolynomial}, we propose approximating $L_{s,t}$ using its conditional expectation.
That is, we would like to derive a closed-form expression for $\bE\big[L_{s,t}\m |\m W_{s,t}\m, H_{s,t}\m, n_{s,t}\big]$.
In addition, we shall derive the conditional variance of $L_{s,t}$ as it gives the $L^2(\P)$ error.\smallbreak

In this section, we focus on the case where Brownian motion is one-dimensional
and leave the general case, a matrix of space-space-time L\'{e}vy areas, as future work.
However, the off-diagonal terms in this matrix will have zero expectation due to the independence and symmetry of the $d$ coordinate processes of the Brownian motion. Therefore, we may construct a high order multidimensional splitting path simply by taking independent copies of the paths detailed in Section \ref{sect:paths}. That said, as discussed in Section \ref{sect:experiments}, we would lose optimality due to ``cross'' iterated integrals such as (\ref{eq:aux-integrals3}).

\begin{theorem}[An optimal unbiased estimator of space-space-time L\'{e}vy area]\label{thm:new_sst_estimator} Let $H_{s,t}$ and $L_{s,t}$ be the previously defined L\'{e}vy areas of Brownian motion and time.
Let $n_{s,t} := \sgn(H_{s,u} - H_{u,t})$ denote the space-time L\'{e}vy swing given by definition \ref{def:st_swing}.
Then the conditional mean and variance of $L_{s,t}$ given the information $(W, H, n)_{s,t}$ is
\begin{align}
\bE\big[L_{s,t} \,|\, W_{s,t}\m, H_{s,t}\m, n_{s,t}\big] & = \frac{1}{30}h^2 + \frac{3}{5}h H_{s,t}^2 - \frac{1}{8\sqrt{6\pi}}n_{s,t}h^{\frac{3}{2}}W_{s,t}\m,\label{eq:L_mean}\\[3pt]
\var\big(L_{s,t} \,|\, W_{s,t}\m, H_{s,t}\m, n_{s,t}\big) & = \frac{11}{25200}h^4 + \Big(\frac{1}{720} - \frac{1}{384\pi}\Big)h^3 W_{s,t}^2 + \frac{1}{700}h^3 H_{s,t}^2\label{eq:L_var}\\[3pt]
&\mmm - \frac{1}{320\sqrt{6\pi}}n_{s,t}h^{\frac{7}{2}}W_{s,t}\m.\nonumber
\end{align}
\end{theorem}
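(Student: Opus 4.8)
The plan is to compute the conditional law of $L_{s,t}$ given $(W_{s,t}, H_{s,t}, n_{s,t})$ directly, exploiting the polynomial expansion of Brownian motion from \cite{foster2020OptimalPolynomial}. First I would expand $W$ on $[s,t]$ in the orthogonal polynomial basis used there (shifted Legendre polynomials in $\frac{u-s}{h}$), so that $W_{s,t}$ and $H_{s,t}$ appear as the zeroth and first coefficients, and the remaining (higher-degree) coefficients $\{I_k\}_{k\geq 2}$ are independent centred Gaussians with explicitly known variances. Substituting this expansion into the triple integrals in Definition \ref{def:sst_levy_area} and integrating term by term (orthogonality kills most cross terms) expresses $L_{s,t}$ as a finite explicit linear combination of products of these coefficients — schematically $L_{s,t} = a\, h H_{s,t}^2 + b\, h^2 + (\text{linear in }\{I_k\}_{k\geq2}\text{ with }W_{s,t}, H_{s,t}\text{ coefficients}) + (\text{quadratic remainder in }\{I_k\}_{k\geq2})$. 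This is essentially the computation already carried out in \cite{foster2020OptimalPolynomial}; the only genuinely new ingredient is conditioning additionally on the swing $n_{s,t}$.

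Next I would handle the $n_{s,t}$-conditioning. The swing $n_{s,t} = \sgn(H_{s,u} - H_{u,t})$ with $u = \frac{1}{2}(s+t)$ depends, given $(W_{s,t}, H_{s,t})$, only on the higher coefficients $\{I_k\}_{k\geq 2}$; in fact $H_{s,u} - H_{u,t}$ is (up to scaling) one specific Gaussian linear functional of those coefficients — it corresponds to the next Legendre mode. So conditioning on $n_{s,t}$ amounts to conditioning a jointly Gaussian vector on the sign of one of its coordinates. For the \emph{mean} \eqref{eq:L_mean}: taking $\bE[\,\cdot\mid W,H,n]$ of the expansion, the quadratic-in-$\{I_k\}_{k\geq2}$ remainder contributes its (sign-conditioned) expectation, which by independence across distinct modes reduces to $\var$ of individual modes plus, for the mode tied to $n_{s,t}$, the second moment of a sign-conditioned Gaussian (still just its variance, since conditioning on the sign does not change the second moment of a centred Gaussian). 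The linear-in-$\{I_k\}_{k\geq2}$ terms contribute only through the one mode correlated with $n_{s,t}$: the relevant quantity is $\bE[\,Z \mid \sgn(Z)] = \sgn(Z)\sqrt{2/\pi}\,\sqrt{\var Z}$ for a centred Gaussian $Z$, which is exactly where the factor $n_{s,t}h^{3/2}W_{s,t}$ and the constant $\frac{1}{8\sqrt{6\pi}}$ come from. Matching coefficients of $h^2$, $hH_{s,t}^2$, and $n_{s,t}h^{3/2}W_{s,t}$ gives \eqref{eq:L_mean}.

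For the \emph{variance} \eqref{eq:L_var} I would compute $\bE[L_{s,t}^2 \mid W,H,n] - \bE[L_{s,t}\mid W,H,n]^2$ using the same expansion. Squaring, the cross terms between the linear and quadratic parts vanish in conditional expectation by symmetry except through the $n_{s,t}$-mode; the remaining work is bookkeeping of fourth moments of Gaussians (Isserlis/Wick) together with a handful of sign-conditioned Gaussian moments — $\bE[Z^2\mid\sgn Z]$, $\bE[Z^4\mid\sgn Z]$, and cross moments of the form $\bE[Z\cdot(\text{other mode})^2\mid\sgn Z]$, the last of which factorises by independence and reduces to the $\sqrt{2/\pi}$ identity again. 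Collecting the powers of $h$ then yields the four stated terms with constants $\frac{11}{25200}$, $\frac{1}{720}-\frac{1}{384\pi}$, $\frac{1}{700}$, $-\frac{1}{320\sqrt{6\pi}}$.

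The main obstacle I anticipate is not conceptual but the sheer volume of exact rational/transcendental bookkeeping: one must keep the polynomial expansion to enough degree that every contribution to $L_{s,t}$ of order up to $h^2$ in the mean (and $h^4$ in the variance) is captured, verify that all higher modes beyond the one feeding $n_{s,t}$ decouple cleanly, and get every numerical constant right — the $\pi$-dependent constants in particular come from the $\sqrt{2/\pi}$ factor in sign-conditioned Gaussian expectations combined with the specific variance of the $H_{s,u}-H_{u,t}$ mode, so an error there propagates everywhere. A useful sanity check along the way is to set $n_{s,t}$ aside (average over it): then \eqref{eq:L_mean} and \eqref{eq:L_var} must collapse to the $(W,H)$-only estimator and variance already established in \cite{foster2020OptimalPolynomial}, which pins down all the non-$\pi$ constants independently.
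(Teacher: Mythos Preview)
Your approach has a genuine gap in the treatment of $n_{s,t}$. The claim that $N_{s,t} := H_{s,u} - H_{u,t}$ ``corresponds to the next Legendre mode'' on $[s,t]$ is false. The variance of $N_{s,t}$ is $\frac{1}{12}h$, whereas the next mode after $H_{s,t}$ in the polynomial expansion is (up to normalisation) the space-time-time area $K_{s,t}$ with variance $\frac{1}{720}h$; in fact $\bE[K_{s,t}\mid W_{s,u},W_{u,t},H_{s,u},H_{u,t}] = \tfrac{1}{8}N_{s,t}$ (see Theorem~\ref{thm:new_stt_estimator}), so $K_{s,t}$ and $N_{s,t}$ are correlated but distinct. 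In the Legendre basis on $[s,t]$, $N_{s,t}$ is an infinite linear combination of the modes $\{I_k\}_{k\ge 2}$, so conditioning on its sign couples all of them rather than affecting ``only the one mode''. Your line for the mean can still be pushed through --- the $\sgn(N_{s,t})$ contribution enters only via the covariance of the linear-in-$\{I_k\}$ part of $L_{s,t}$ with $N_{s,t}$ --- but computing that covariance requires the full expansions of both $L_{s,t}$ and $N_{s,t}$, not a single coefficient. The variance computation is correspondingly heavier, and the ``decouple cleanly'' step you flag as a check is precisely the step that fails.

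The paper takes a different and much shorter route: bisect $[s,t]$ at the midpoint $u$, expand $\int_s^t W_{s,r}^2\,dr$ additively over the two halves, and apply the already-known formulae for $\bE[L\mid W,H]$ and $\var(L\mid W,H)$ from \cite{foster2020OptimalPolynomial} on each half-interval. One then rotates the four half-interval Gaussians $(W_{s,u},H_{s,u},W_{u,t},H_{u,t})$ to the independent quadruple $(W_{s,t},H_{s,t},Z_{s,u},N_{s,t})$, so that $N_{s,t}$ is \emph{by construction} one of the coordinates. Conditioning on $n_{s,t}=\sgn(N_{s,t})$ is then just the tower property together with the half-normal moments $\bE[N_{s,t}^m\mid n_{s,t}]$. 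No infinite series or full-interval polynomial expansion is needed; everything reduces to polynomial identities in four variables.
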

\begin{proof} We first note by applying \cite[Theorem 3.10]{foster2020OptimalPolynomial} on $[s,u]$ and $[u,t]$, we have
\begin{align*}
\bE\big[L_{s,u} \,|\, W_{s,u}\m, H_{s,u}\big] & = \frac{1}{120}h^2 + \frac{3}{10}hH_{s,u}^2\m,\\[3pt]
\var\big(L_{s,u} \,|\, W_{s,u}\m, H_{s,u}\big) & = \frac{11}{403200}h^4 + h^3\Big(\frac{1}{5760}W_{s,u}^2 + \frac{1}{5600}H_{s,u}^2\Big), \\[3pt]
\bE\big[L_{u,t} \,|\, W_{u,t}\m, H_{u,t}\big] & = \frac{1}{120}h^2 + \frac{3}{10}hH_{u,t}^2\m,\\[3pt]
\var\big(L_{u,t} \,|\, W_{u,t}\m, H_{u,t}\big) & =\frac{11}{403200}h^4 + h^3\Big(\frac{1}{5760}W_{u,t}^2 + \frac{1}{5600}H_{u,t}^2\Big).
\end{align*}
To utilise the above expectations, we will ``expand'' the following integrals over $[s,t]$.
\begin{align}
\int_s^t W_{s,r}\,dr & = \int_s^u W_{s,r}\,dr + \int_u^t W_{s,r}\,dr\label{eq:time_integral_relation1}\\
& = \int_s^u W_{s,r}\,dr + \frac{1}{2}hW_{s,u} + \int_u^t W_{u,r}\,dr,\nonumber\\
\int_s^t W_{s,r}^2\,dr & = \int_s^u W_{s,r}^2\,dr + \int_u^t W_{s,r}^2\,dr\label{eq:time_integral_relation2}\\
& = \int_s^u W_{s,r}^2\,dr + \frac{1}{2}h W_{s,u}^2 + 2\m W_{s,u}\int_u^t W_{u,r}\,dr +\int_u^t W_{u,r}^2\,dr.\nonumber
\end{align}
By \cite[Theorem 3.9]{foster2020OptimalPolynomial}, which follows from integration by parts, we have that, for $u\leq v$,
\begin{align}
\int_u^v W_{v,r}\,dr & = \frac{1}{2}(v-u)W_{u,v} + (v-u)H_{u,v}\m,\label{eq:time_integral_relation3}\\
\int_u^v W_{u,r}^2\,dr & = \frac{1}{3}(v-u)W_{u,v}^2 + (v-u)W_{u,v}H_{u,v} + 2 L_{u,v}\m.\label{eq:time_integral_relation4}
\end{align}
From the decomposition (\ref{eq:time_integral_relation1}) and identity (\ref{eq:time_integral_relation3}) on $[s,u]$ and $[u,t]$, it follows that
\begin{equation}\label{eq:st_levy_area_relation}
H_{s,t} = \frac{1}{4}\big(W_{s,u} - W_{u,t}\big)+ \frac{1}{2}\big(H_{s,u} + H_{u,t}\big)\m.
\end{equation}
We now define the following random variables:
\begin{align}
Z_{s,u} & := \frac{1}{8}\big(W_{s,u} - W_{u,t}\big) - \frac{3}{4}\big(H_{s,u} + H_{u,t}\big)\m,\label{eq:arch_midpoint}\\[3pt]
N_{s,t} & := H_{s,u} - H_{u,t}\m.\label{eq:gaussian_swing}
\end{align}
Since $W_{a,b}\sim\mathcal{N}(0, (b-a))$ and $H_{a,b}\sim\mathcal{N}\big(0, \frac{1}{12}(b-a)\big)$ are independent, we see that $W_{s,t}\m, H_{s,t}\m, Z_{s,u}\m, N_{s,t}$ are jointly normal, uncorrelated and therefore also independent. 
From (\ref{eq:arch_midpoint}) and (\ref{eq:gaussian_swing}), it directly follows that $Z_{s,u}\sim\mathcal{N}\big(0, \frac{1}{16}h\big)$ and $N_{s,t}\sim\mathcal{N}\big(0, \frac{1}{12}h\big)$.
In addition, by rearranging the above expressions for these random variables, we have
\begin{align}
W_{s,u} & = \frac{1}{2}W_{s,t} + \frac{3}{2}H_{s,t} + Z_{s,u}\m,\label{eq:wsu_formula}\\[3pt]
W_{u,t} & = \frac{1}{2}W_{s,t} - \frac{3}{2}H_{s,t} - Z_{s,u}\m,\label{eq:wut_formula}\\[3pt]
H_{s,u} & = \frac{1}{4}H_{s,t} - \frac{1}{2}Z_{s,u} + \frac{1}{2}N_{s,t}\m,\label{eq:hsu_formula}\\[3pt]
H_{u,t} & = \frac{1}{4}H_{s,t} - \frac{1}{2}Z_{s,u} - \frac{1}{2}N_{s,t}\m.\label{eq:hut_formula}
\end{align}
Putting all of this together, and using the independence of Brownian increments, gives
\begin{align*}
&\bE\bigg[\int_s^t W_{s,r}^2\,dr \,\Big|\, W_{s,u}\m, H_{s,u}\m,W_{u,t}\m, H_{u,t}\bigg]\\
&\mm = \bE\bigg[\int_s^u W_{s,r}^2\,dr \,\Big|\, W_{s,u}\m, H_{s,u}\bigg] + \frac{1}{2}h W_{s,u}^2 + 2\m W_{s,u}\int_u^t W_{u,r}\,dr\\
&\mmmm + \bE\bigg[\int_u^t W_{u,r}^2\,dr \,\Big|\, W_{u,t}\m, H_{u,t}\bigg]\\
&\mm = \frac{1}{6}hW_{s,u}^2 + \frac{1}{2}hW_{s,u}H_{s,u} + 2\m\bE\big[L_{s,u} \,|\, W_{s,u}\m, H_{s,u}\big] + \frac{1}{2}h W_{s,u}^2 + \frac{1}{2}hW_{s,u}W_{u,t}\\
&\mmmm + hW_{s,u}H_{u,t} + \frac{1}{6}hW_{u,t}^2 + \frac{1}{2}hW_{u,t}H_{u,t} + 2\m\bE\big[L_{u,t} \,|\, W_{u,t}\m, H_{u,t}\big]\\
&\mm = \frac{1}{6}hW_{s,u}^2 + \frac{1}{2}hW_{s,u}H_{s,u} + \frac{3}{5}hH_{s,u}^2 + \frac{1}{60}h^2 + \frac{1}{2}hW_{s,u}^2  + \frac{1}{2}hW_{s,u}W_{u,t}\\
&\mmmm  + hW_{s,u}H_{u,t} + \frac{1}{6}hW_{u,t}^2 + \frac{1}{2}hW_{u,t}H_{u,t} + \frac{3}{5}hH_{u,t}^2 + \frac{1}{60}h^2 \\
&\mm = \frac{1}{3}hW_{s,t}^2 + hW_{s,t}H_{s,t} + \frac{6}{5}hH_{s,t}^2  + \frac{1}{30}h^2\\
&\mmmm + \frac{1}{5}hH_{s,t}Z_{s,u} - \frac{1}{4}hW_{s,t}N_{s,t} + \frac{2}{15}hZ_{s,u}^2 + \frac{3}{10}hN_{s,t}^2\m,
\end{align*}
where the last line was obtained by substituting (\ref{eq:wsu_formula})$\,$--$\,$(\ref{eq:hut_formula}) into the previous line.
Since $n_{s,t} := \sgn(N_{s,t})$ and $N_{s,t}\sim\mathcal{N}\big(0, \frac{1}{12}h\big)$, it follows that $|N_{s,t}|$ has a half-normal distribution and is independent of $n_{s,t}\m$. Moreover, this implies that its moments are
\begin{align}
\bE\big[N_{s,t} \,\big|\, n_{s,t}\big] & = \frac{1}{\sqrt{6\pi}}n_{s,t}h^\frac{1}{2}, &\hspace*{-2.5mm} \bE\big[N_{s,t}^3 \,\big|\, n_{s,t}\big] & = \frac{1}{6\sqrt{6\pi}}n_{s,t}h^\frac{3}{2},\label{eq:half_normal_odd_moments}\\[3pt]
\bE\big[N_{s,t}^2 \,\big|\, n_{s,t}\big] & = \frac{1}{12}h\m, &\hspace*{-2.5mm} \bE\big[N_{s,t}^4 \,\big|\, n_{s,t}\big] & = \frac{1}{48}h^2.\label{eq:half_normal_even_moments}
\end{align}
Explicit formulae for the first four central moments of the half-normal distribution are given in \cite[Equation (16)]{elandt1961folded_normal}. Since $W_{s,t}\m, H_{s,t}\m, Z_{s,u}\m, N_{s,t}$ are independent, we have
\begin{align*}
&\bE\bigg[\int_s^t W_{s,r}^2\,dr \,\Big|\, W_{s,t}\m, H_{s,t}\m,n_{s,t}\m\bigg]\\
&\mm = \bE\bigg[\bE\bigg[\int_s^t W_{s,r}^2\,dr \,\Big|\, W_{s,t}\m, H_{s,t}\m,Z_{s,u}\m, N_{s,t}\m\bigg] \,\bigg|\, W_{s,t}\m, H_{s,t}\m,n_{s,t}\m\bigg].
\end{align*}
As $\big(W_{s,t}\m, H_{s,t}\m, Z_{s,u}\m, N_{s,t}\big)$ and $\big(W_{s,u}\m, H_{s,u}\m, W_{u,t}\m, H_{u,t}\big)$ encode the same information, we have
\begin{align*}
&\bE\bigg[\int_s^t W_{s,r}^2\,dr \,\Big|\, W_{s,t}\m, H_{s,t}\m,n_{s,t}\m\bigg]\\
& = \frac{1}{3}hW_{s,t}^2 + hW_{s,t}H_{s,t} + \frac{6}{5}hH_{s,t}^2  + \frac{1}{30}h^2\\
&\mmmm + \frac{1}{5}hH_{s,t}\bE\big[Z_{s,u}\big] - \frac{1}{4}hW_{s,t}\bE\big[N_{s,t}\m |\m n_{s,t}\big] + \frac{2}{15}h\bE\big[Z_{s,u}^2\big] + \frac{3}{10}h\bE\big[N_{s,t}^2\m |\m n_{s,t}\big]\\[3pt]
& = \frac{1}{3}hW_{s,t}^2 + hW_{s,t}H_{s,t} + \frac{1}{15}h^2 + \frac{6}{5}h H_{s,t}^2 - \frac{1}{4\sqrt{6\pi}}\m n_{s,t}\m h^{\frac{3}{2}}W_{s,t}\m,
\end{align*}
where we used the moments $\bE\big[Z_{s,u}\big] = 0$, $\m\bE\big[Z_{s,u}^2\big] = \frac{1}{16}h$ as well as (\ref{eq:half_normal_odd_moments}) and (\ref{eq:half_normal_even_moments}).
The condition expectation (\ref{eq:L_mean}) now follows by applying equation (\ref{eq:time_integral_relation4}) to the above.\medbreak

We employ a similar strategy to compute the conditional variance (\ref{eq:L_var}) of $L_{s,t}\m$.
Using the decomposition (\ref{eq:time_integral_relation2}) and independence of $\big(W_{s,u}\m, H_{s,u}\m, W_{u,t}\m, H_{u,t}\big)$, we have
\begin{align*}
&\var\bigg(\int_s^t W_{s,r}^2\m dr \,\Big|\, W_{s,u}\m, W_{u,t}\m, H_{s,u}\m, H_{u,t}\bigg)\\
&\mm = \var\bigg(\int_s^u W_{s,r}^2\,dr + \frac{1}{2}h W_{s,u}^2\\
&\hspace{36mm} + 2\m W_{s,u}\int_u^t W_{u,r}\,dr +\int_u^t W_{u,r}^2\,dr \,\Big|\, W_{s,u}\m, W_{u,t}\m, H_{s,u}\m, H_{u,t}\bigg)\\
&\mm = \var\bigg(\int_s^u W_{s,r}^2\m dr \,\Big|\, W_{s,u}\m, H_{s,u}\bigg) + \var\bigg(\int_u^t W_{u,r}^2\m dr \,\Big|\, W_{u,t}\m, H_{u,t}\bigg).
\end{align*}
Therefore, by (\ref{eq:time_integral_relation4})  and the formulae for the condition variances of $L_{s,u}$ and $L_{u,t}\m$,
\begin{align*}
&\var\bigg(\int_s^t W_{s,r}^2\m dr \,\Big|\, W_{s,u}\m, W_{u,t}\m, H_{s,u}\m, H_{u,t}\bigg)\\
&\mm = \frac{11}{50400}h^4 + h^3\Big(\frac{1}{1440}W_{s,u}^2 + \frac{1}{1440}W_{u,t}^2 + \frac{1}{1400}H_{s,u}^2 + \frac{1}{1400}H_{u,t}^2\Big).
\end{align*}
By plugging in (\ref{eq:wsu_formula}) -- (\ref{eq:hut_formula}), we can rewrite this in terms of $W_{s,t}\m, H_{s,t}\m, Z_{s,u}\m, N_{s,t}\m$.
\begin{align*}
&\var\bigg(\int_s^t W_{s,r}^2\m dr \,\Big|\, W_{s,u}\m, W_{u,t}\m, H_{s,u}\m, H_{u,t}\bigg)\\
&\hspace{2.5mm} = \frac{11}{50400}h^4 + h^3\Big(\frac{1}{1440}W_{s,u}^2 + \frac{1}{1440}W_{u,t}^2 + \frac{1}{1400}H_{s,u}^2 + \frac{1}{1400}H_{u,t}^2\Big)\\
&\hspace{2.5mm} = \frac{11}{50400}h^4 + h^3\bigg(\frac{1}{2880}\m W_{s,t}^2 + \frac{9}{2800}\m H_{s,t}^2 + \frac{2}{525}\m H_{s,t} Z_{s,u} + \frac{11}{6300}\m Z_{s,u}^2  +\frac{1}{2800}\m N_{s,t}^2\bigg).
\end{align*}
The second conditional moment of the iterated integral can be directly calculated as
\begin{align*}
&\bE\bigg[\bigg(\int_s^t W_{s,r}^2\,dr\bigg)^2 \,\Big|\, W_{s,u}\m, W_{u,t}\m, H_{s,u}\m, H_{u,t}\m\bigg]\\
&\hspace{-1mm} = \bE\bigg[\int_s^t\hspace{-0.5mm} W_{s,r}^2\,dr \m\Big|\m W_{s,u}\m, W_{u,t}\m, H_{s,u}\m, H_{u,t}\bigg]^2\hspace{-0.5mm} + \var\bigg(\int_s^t\hspace{-0.5mm} W_{s,r}^2\m dr \m\Big|\m W_{s,u}\m, W_{u,t}\m, H_{s,u}\m, H_{u,t}\hspace{-0.25mm}\bigg).
\end{align*}
Therefore, by substituting the expressions for the above conditional moments, we have
\begin{align*}
&\bE\bigg[\bigg(\int_s^t W_{s,r}^2\,dr\bigg)^2 \,\Big|\, W_{s,u}\m, W_{u,t}\m, H_{s,u}\m, H_{u,t}\m\bigg]\\
&\mm = \Big(\frac{1}{3}hW_{s,t}^2 + hW_{s,t}H_{s,t} + \frac{6}{5}hH_{s,t}^2  + \frac{1}{30}h^2\\
&\mmmm + \frac{1}{5}hH_{s,t}Z_{s,u} - \frac{1}{4}hW_{s,t}N_{s,t} + \frac{2}{15}hZ_{s,u}^2 + \frac{3}{10}hN_{s,t}^2\Big)^2 + \frac{11}{50400}h^4\\
&\mmmm + h^3\bigg(\frac{1}{2880}\m W_{s,t}^2 + \frac{9}{2800}\m H_{s,t}^2 + \frac{2}{525}\m H_{s,t} Z_{s,u} + \frac{11}{6300}\m Z_{s,u}^2  +\frac{1}{2800}\m N_{s,t}^2\bigg).
\end{align*}
Expanding the bracket and collecting terms yields
\begin{align*}
&\bE\bigg[\bigg(\int_s^t W_{s,r}^2\,dr\bigg)^2 \,\Big|\, W_{s,u}\m, W_{u,t}\m, H_{s,u}\m, H_{u,t}\m\bigg]\\
& = \frac{67}{50400}h^4 + \frac{1}{9}h^2 W_{s,t}^4 + \frac{36}{25}h^2 H_{s,t}^4 + \frac{4}{225}h^2 Z_{s,u}^4 + \frac{9}{100}h^2 N_{s,t}^4 + \frac{9}{5} h^2 W_{s,t}^2 H_{s,t}^2\\
&\hspace{2.5mm} + \frac{4}{45}h^2 W_{s,t}^2 Z_{s,u}^2 + \frac{21}{80}h^2 W_{s,t}^2 N_{s,t}^2 + \frac{9}{25}h^2 H_{s,t}^2 Z_{s,u}^2 + \frac{18}{25}h^2 H_{s,t}^2 N_{s,t}^2 + \frac{2}{25}h^2 Z_{s,u}^2 N_{s,t}^2\\
&\hspace{2.5mm}  + \frac{13}{576}h^3 W_{s,t}^2 + \frac{233}{2800}h^3 H_{s,t}^2 + \frac{67}{6300}h^3 Z_{s,u}^2 + \frac{57}{2800}h^3 N_{s,t}^2 + \frac{1}{15}h^3 W_{s,t}H_{s,t}\\
&\hspace{2.5mm}  - \frac{1}{60}h^3 W_{s,t}N_{s,t} + \frac{3}{175}h^3 H_{s,t}Z_{s,u} + \frac{2}{3}h^2 W_{s,t}^3 H_{s,t}  - \frac{1}{10}h^2 W_{s,t}H_{s,t}Z_{s,u}N_{s,t}\\
&\hspace{2.5mm}  - \frac{1}{6}h^2 W_{s,t}^3 N_{s,t} + \frac{12}{5}h^2 W_{s,t} H_{s,t}^3 - \frac{3}{20}h^2 W_{s,t}N_{s,t}^3 + \frac{12}{25}h^2 H_{s,t}^3 Z_{s,u} + \frac{4}{75}h^2 H_{s,t}Z_{s,u}^3\\
&\hspace{2.5mm}  + \frac{2}{15}h^2 W_{s,t}^2 H_{s,t}Z_{s,u} - \frac{1}{2}h^2 W_{s,t}^2 H_{s,t} N_{s,t} + \frac{2}{5}h^2 W_{s,t} H_{s,t}^2 Z_{s,u}  + \frac{4}{15}h^2 W_{s,t} H_{s,t} Z_{s,u}^2 \\
&\hspace{2.5mm} + \frac{3}{5}h^2 W_{s,t} H_{s,t} N_{s,t}^2 - \frac{3}{5}h^2 W_{s,t} H_{s,t}^2 N_{s,t} - \frac{1}{15}h^2 W_{s,t}Z_{s,u}^2 N_{s,t} + \frac{3}{25}h^2 H_{s,t}Z_{s,u}N_{s,t}^2\m.
\end{align*}
By taking the expectation of the above terms conditional on $\big(W_{s,t}\m, H_{s,t}\m, n_{s,t}\big)$ and substituting in the moments of $N_{s,t}\,|\,n_{s,t}$ given by (\ref{eq:half_normal_odd_moments}) and (\ref{eq:half_normal_even_moments}), it follows that
\begin{align*}
&\bE\bigg[\bigg(\int_s^t W_{s,r}^2\,dr\bigg)^2 \,\Big|\, W_{s,t}\m, H_{s,t}\m, n_{s,t}\m\bigg]\\
&\mm = \bE\bigg[\bE\bigg[\bigg(\int_s^t W_{s,r}^2\,dr\bigg)^2 \,\Big|\, W_{s,u}\m, W_{u,t}\m, H_{s,u}\m, H_{u,t}\m\bigg] \,\bigg|\, W_{s,t}\m, H_{s,t}\m, n_{s,t}\m\bigg]\\[3pt]
&\mm = \frac{13}{2100}h^4 + \frac{1}{9}h^2 W_{s,t}^4 + \frac{36}{25}h^2 H_{s,t}^4 + \frac{9}{5} h^2 W_{s,t}^2 H_{s,t}^2 + \frac{1}{20}h^3 W_{s,t}^2 + \frac{29}{175}h^3 H_{s,t}^2 \\
&\mmmm + \frac{2}{15}h^3 W_{s,t}H_{s,t} + \frac{12}{5}h^2 W_{s,t} H_{s,t}^3 + \frac{2}{3}h^2 W_{s,t}^3 H_{s,t}\\
&\mmmm  - \frac{1}{\sqrt{6\pi}}n_{s,t}h^{\frac{5}{2}}\bigg( \frac{1}{6} W_{s,t}^3 + \frac{11}{240}h W_{s,t} + \frac{1}{2} W_{s,t}^2 H_{s,t} + \frac{3}{5} W_{s,t} H_{s,t}^2 \bigg)\m.
\end{align*}
Thus, we can compute the required conditional variance using the following identity:
\begin{align*}
&\var\bigg(\int_s^t W_{s,r}^2\,dr \,\Big|\, W_{s,t}\m, H_{s,t}\m, n_{s,t}\m\bigg)\\
&\mm = \bE\bigg[\bigg(\int_s^t W_{s,r}^2\,dr\bigg)^2 \,\Big|\, W_{s,t}\m, H_{s,t}\m, n_{s,t}\m\bigg] - \bigg(\bE\bigg[\int_s^t W_{s,r}^2\,dr \,\Big|\, W_{s,t}\m, H_{s,t}\m,n_{s,t}\m\bigg]\bigg)^2.
\end{align*}
Plugging in the expressions for these conditional moments and simplifying terms gives 
\begin{align*}
&\var\bigg(\int_s^t W_{s,r}^2\,dr \,\Big|\, W_{s,t}\m, H_{s,t}\m, n_{s,t}\m\bigg)\\
&\mm = \frac{13}{2100}h^4 + \frac{1}{9}h^2 W_{s,t}^4 + \frac{36}{25}h^2 H_{s,t}^4 + \frac{9}{5} h^2 W_{s,t}^2 H_{s,t}^2 + \frac{1}{20}h^3 W_{s,t}^2 + \frac{29}{175}h^3 H_{s,t}^2 \\
&\mmmm\mm + \frac{2}{15}h^3 W_{s,t}H_{s,t} + \frac{12}{5}h^2 W_{s,t} H_{s,t}^3 + \frac{2}{3}h^2 W_{s,t}^3 H_{s,t}\\
&\mmmm\mm  - \frac{1}{\sqrt{6\pi}}n_{s,t}h^{\frac{5}{2}}\bigg( \frac{1}{6} W_{s,t}^3 + \frac{11}{240}h W_{s,t} + \frac{1}{2} W_{s,t}^2 H_{s,t} + \frac{3}{5} W_{s,t} H_{s,t}^2 \bigg)\\
&\mmmm\mm - \bigg(\frac{1}{3}hW_{s,t} + hW_{s,t}H_{s,t} + \frac{1}{15}h^2 + \frac{6}{5}h H_{s,t}^2 - \frac{1}{4\sqrt{6\pi}}\m n_{s,t}\m h^{\frac{3}{2}}W_{s,t}\bigg)^2\\
&\mm = \frac{11}{6300}h^4 + \Big(\frac{1}{180} - \frac{1}{96\pi}\Big)h^3 W_{s,t}^2 + \frac{1}{175}h^3 H_{s,t}^2 - \frac{1}{80\sqrt{6\pi}}\m n_{s,t}\m h^{\frac{7}{2}}W_{s,t}\m.
\end{align*}
The result now follows as, by (\ref{eq:time_integral_relation4}), the above is the conditional variance of $2L_{s,t}$.
\end{proof}

In the construction of the piecewise linear paths defined by (\ref{eq:high_order_strang2}) and (\ref{eq:shifted_ode_nonlinear}), there are two distinct solutions which result in paths with the required iterated integrals.
To decide on the solution, we consider the ``space-time-time'' L\'{e}vy area of the path.
Whilst this quantity is Gaussian for Brownian motion and can be exactly generated, it is asymptotically smaller than space-space-time L\'{e}vy area, and so less impactful.
Therefore, we propose using the expectation of space-time-time L\'{e}vy area conditional on $\big(W_{s,t}\m, H_{s,t}\m, n_{s,t}\big)$ and choosing the path $\gamma$ which best matches this approximation.

\begin{definition}\label{def:stt_levy_area} The rescaled \textbf{space-time-time L\'{e}vy area} of Brownian motion over an interval $[s,t]$ is defined as
\begin{align*}
K_{s,t} & := \frac{1}{h^2}\int_s^t \bigg(W_{s,u} - \frac{u-s}{h}\,W_{s,t}\bigg)\bigg(\frac{1}{2}h - (u-s)\bigg) du\m.
\end{align*}
\end{definition}\vspace{-3.5mm}
\begin{figure}[H] \label{fig:stt_levy_area}
\centering
\hspace*{6.5mm}\includegraphics[width=0.95\textwidth]{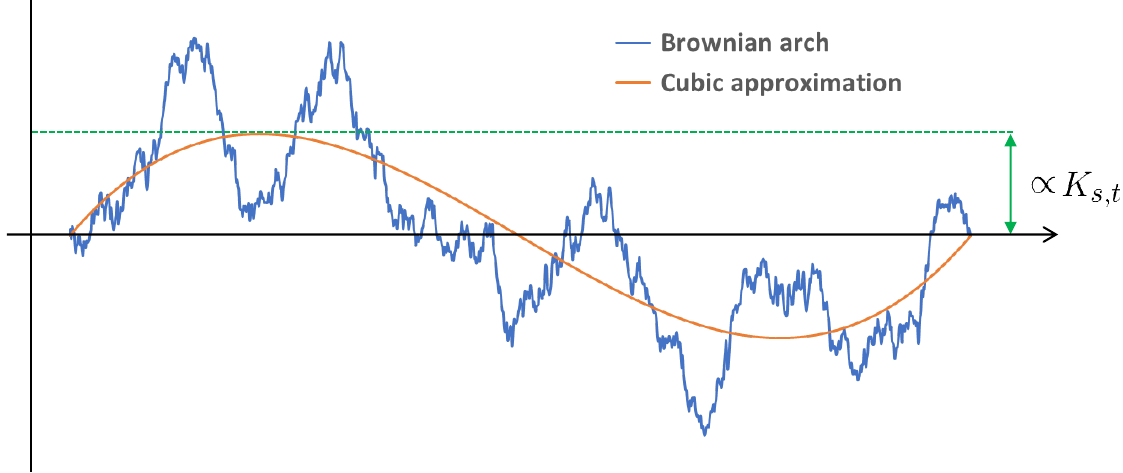}\vspace{-6mm}
\caption{Space-time-time L\'{e}vy area corresponds to a cubic approximation of the Brownian arch (which is a Brownian motion conditioned on having zero increment and space-time L\'{e}vy area \cite{foster2020OptimalPolynomial}).}
\end{figure}

Since $W_{s,t}\m, H_{s,t}$ and $K_{s,t}$ can be identified with coefficients from a polynomial expansion of Brownian motion, it is straightforward to establish their independence.
However, $K_{s,t}$ is not independent of $n_{s,t}$ and we can compute the following moments:
\begin{theorem}\label{thm:new_stt_estimator}
The space-time-time L\'{e}vy area $K_{s,t}$ is independent of $(W_{s,t}\m, H_{s,t})$ and has the following distribution and conditional moments,
\begin{align}
\hspace{1.25mm}K_{s,t} & \sim\mathcal{N}\Big(0, \frac{1}{720}h\Big),\label{eq:K_distribution}\\[4pt]
\bE\big[K_{s,t} \,|\, n_{s,t}\big] & = \frac{1}{8\sqrt{6\pi}}\m n_{s,t} h^{\frac{1}{2}}\m,\hspace{1.25mm}\label{eq:K_mean}\\[3pt]
\bE\big[K_{s,t}^2 \,|\, n_{s,t}\big] & = \frac{1}{720} h\m.\hspace{1.25mm}\label{eq:K_second_moment}
\end{align}
\end{theorem}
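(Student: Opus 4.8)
The plan is to realise each of $W_{s,t}$, $H_{s,t}$ and $K_{s,t}$ as a Wiener integral $\int_s^t \phi(r)\, dW_r$ against an explicit deterministic kernel, and then read off every stated property from It\^{o}'s isometry. First I would rewrite the Brownian bridge integrals using the stochastic Fubini theorem (equivalently, integration by parts, exactly as in the proof of Theorem \ref{thm:new_sst_estimator}). For $K_{s,t}$, substituting $W_{s,u} = \int_s^u dW_r$ into Definition \ref{def:stt_levy_area} and swapping the order of integration gives $K_{s,t} = \int_s^t \phi(r)\, dW_r$ where, after rescaling $r = s + hv$, the kernel is $\phi(s+hv) = \tfrac{1}{12}(6v^2 - 6v + 1)$, a scalar multiple of the shifted Legendre polynomial $\widetilde{P}_2$ on $[0,1]$. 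The same computation shows $W_{s,t}$ and $H_{s,t}$ have kernels proportional to $\widetilde{P}_0 \equiv 1$ and $\widetilde{P}_1(v) = 1-2v$ respectively (cf.\ \cite{foster2020OptimalPolynomial}).

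Since $(W_{s,t}, H_{s,t}, K_{s,t})$ is then a Gaussian vector whose pairwise covariances are $\int_s^t \phi_i\phi_j \propto \int_0^1 \widetilde{P}_i\widetilde{P}_j\, dv = 0$ for $i\neq j$ by orthogonality of Legendre polynomials, the three variables are independent; moreover $\bE[K_{s,t}] = 0$ and $\var(K_{s,t}) = \int_s^t \phi(r)^2\, dr = \tfrac{h}{144}\int_0^1 \widetilde{P}_2(v)^2\, dv = \tfrac{h}{144}\cdot\tfrac{1}{5} = \tfrac{h}{720}$, which is \eqref{eq:K_distribution}. Next I would handle the conditioning on $n_{s,t} = \sgn(N_{s,t})$, where $N_{s,t} = H_{s,u} - H_{u,t}$ (with $u = \tfrac12(s+t)$) is the Gaussian variable from \eqref{eq:gaussian_swing}. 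Writing $N_{s,t}$ as a Wiener integral with its piecewise-linear kernel $\eta$ (symmetric about the midpoint), It\^{o}'s isometry gives $\var(N_{s,t}) = \tfrac{h}{12}$ and $\cov(K_{s,t}, N_{s,t}) = \int_s^t \phi\,\eta\, dr = \tfrac{h}{96}$ --- a short polynomial integral, most easily evaluated by exploiting the symmetry of both kernels about $u$. Since $(K_{s,t}, N_{s,t})$ is jointly Gaussian, $\bE[K_{s,t}\mid N_{s,t}] = \tfrac{\cov(K_{s,t},N_{s,t})}{\var(N_{s,t})}\, N_{s,t} = \tfrac18 N_{s,t}$; as $\sigma(n_{s,t}) \subseteq \sigma(N_{s,t})$, the tower property and the half-normal moment $\bE[N_{s,t}\mid n_{s,t}] = \tfrac{1}{\sqrt{6\pi}} n_{s,t} h^{1/2}$ from \eqref{eq:half_normal_odd_moments} yield \eqref{eq:K_mean}.

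For \eqref{eq:K_second_moment} I would decompose $K_{s,t} = \tfrac18 N_{s,t} + R$, where the residual $R$ is Gaussian and independent of $N_{s,t}$ (hence of $n_{s,t}$) with $\var(R) = \var(K_{s,t}) - \tfrac{1}{64}\var(N_{s,t}) = \tfrac{h}{720} - \tfrac{h}{768} = \tfrac{h}{11520}$. Then $\bE[K_{s,t}^2\mid n_{s,t}] = \tfrac{1}{64}\bE[N_{s,t}^2\mid n_{s,t}] + \var(R) = \tfrac{1}{64}\cdot\tfrac{h}{12} + \tfrac{h}{11520} = \tfrac{h}{720}$, using \eqref{eq:half_normal_even_moments} and the fact that the cross term $\tfrac14\bE[N_{s,t}R\mid n_{s,t}]$ vanishes because $R$ has mean zero and is independent of $(N_{s,t}, n_{s,t})$.

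The only genuine subtlety --- rather than any hard step --- is keeping straight which quantities $K_{s,t}$ is and is not independent of: it is independent of the coarse data $(W_{s,t}, H_{s,t})$ by Legendre orthogonality, yet it is correlated with the finer variable $N_{s,t}$, which resolves the behaviour of $W$ on the two half-intervals, and it is precisely this correlation that produces the nonzero conditional mean in \eqref{eq:K_mean}. Everything else reduces to the two elementary kernel integrals $\int_0^1 \widetilde{P}_2(v)^2\, dv$ and $\int_s^t \phi\,\eta\, dr$, together with the moments of the half-normal law already recorded in \eqref{eq:half_normal_odd_moments}--\eqref{eq:half_normal_even_moments}.
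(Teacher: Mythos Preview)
Your proof is correct and takes a genuinely different route from the paper. For \eqref{eq:K_distribution} the paper quotes the polynomial expansion of Brownian motion from \cite{foster2020OptimalPolynomial} (the Jacobi-type polynomials $e_1,e_2$ for the bridge), whereas you derive the Wiener-integral kernels directly and appeal to orthogonality of the shifted Legendre polynomials $\widetilde{P}_0,\widetilde{P}_1,\widetilde{P}_2$; the two arguments are equivalent in content, but yours is more self-contained. The real difference is in \eqref{eq:K_mean}: the paper computes $\bE\big[K_{s,t}\,\big|\,W_{s,u},W_{u,t},H_{s,u},H_{u,t}\big]$ by substituting the quadratic conditional mean of $W_{s,r}$ given $(W,H)$ on each half-interval and simplifying a page of polynomial identities down to $\tfrac18 N_{s,t}$, while you obtain $\bE[K_{s,t}\mid N_{s,t}]=\tfrac18 N_{s,t}$ in one line from the covariance $\int_s^t\phi\,\eta=\tfrac{h}{96}$ and Gaussian regression --- this is a substantial shortcut. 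For \eqref{eq:K_second_moment} the paper instead uses the reflection symmetry $W\mapsto -W$ (under which $K_{s,t}^2$ is invariant but $n_{s,t}$ flips sign) to conclude $K_{s,t}^2$ and $n_{s,t}$ are uncorrelated, which is slicker than your direct decomposition $K_{s,t}=\tfrac18 N_{s,t}+R$, though your approach has the advantage of reusing the regression already established and making the exact variance $\var(R)=\tfrac{h}{11520}$ explicit.
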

\begin{proof}
It was shown in \cite[Theorem 2.2]{foster2020OptimalPolynomial}, that for a Brownian bridge $B$ on $[0,1]$ and certain orthogonal polynomials $e_1$ and $e_2\m$, we have
\begin{align*}
I_1 & := \int_0^1 B_t\cdot\frac{e_1(t)}{t(1-t)}\m dt\hspace{3mm}\text{and}\hspace{2.75mm}I_2 := \int_0^1 B_t\cdot\frac{e_2(t)}{t(1-t)}\m dt
\end{align*}
are independent random variables with $I_1\sim\mathcal{N}\big(0, \frac{1}{2}\big)$ and $I_1\sim\mathcal{N}\big(0, \frac{1}{6}\big)$. Moreover, by Theorems 2.7 and 2.8 in \cite{foster2020OptimalPolynomial}, the orthogonal polynomials $e_1$ and $e_2$ are given by
\begin{align*}
e_1(t) & = \sqrt{6}\m t(t-1),\\[3pt]
e_2(t) & = \sqrt{30}\m t(t - 1)(2t - 1).
\end{align*}
Thus $I_1 = \sqrt{6}\int_0^1 B_t\m dt$ and $I_2 = 2\sqrt{30}\int_0^1 B_t (t - \frac{1}{2})\m dt$. It therefore follows that
\begin{align*}
\int_0^1 B_t\m dt\sim\mathcal{N}\Big(0, \frac{1}{12}\Big)\hspace{3mm}\text{and}\hspace{2.5mm}\int_0^1 B_t\Big(\frac{1}{2} - t\Big)\m dt\sim\mathcal{N}\Big(0, \frac{1}{720}\Big)
\end{align*}
are independent. By the standard Brownian scaling, this implies $H_{s,t}\sim\mathcal{N}\big(0, \frac{1}{12}h\big)$ and $K_{s,t}\sim\mathcal{N}\big(0, \frac{1}{720}h\big)$ are independent. Moreover, since $H_{s,t}$ and $K_{s,t}$ are functions of the Brownian bridge $\big\{W_{s,u} - \frac{u-s}{h}W_{s,t}\big\}_{u\in[s,t]}\m$, they are also independent of $W_{s,t}$.
We will now compute the expectation of $K_{s,t}$ conditional on $(W_{s,u}\m, W_{u,t}\m, H_{s,u}\m, H_{u,t}\m)$.
\begin{align*}
&h^2\m\bE\Big[K_{s,t}\,\big|\, W_{s,u}\m, W_{u,t}\m, H_{s,u}\m, H_{u,t}\m\Big]\\
&\mm = \bE\bigg[\int_s^t \bigg(W_{s,r} - \frac{r-s}{h}\,W_{s,t}\bigg)\bigg(\frac{1}{2}h - (r-s)\bigg) dr\m\Big|\, W_{s,u}\m, W_{u,t}\m, H_{s,u}\m, H_{u,t}\m\bigg]\\
&\mm = \frac{1}{2}h\int_s^t W_{s,r}\, dr - \bE\bigg[\int_s^t W_{s,r}(r-s)\m dr\m\Big|\, W_{s,u}\m, W_{u,t}\m, H_{s,u}\m, H_{u,t}\m\bigg] + \frac{1}{12}h^2 W_{s,t}\\
&\mm = \frac{1}{3}h^2 W_{s,t} + \frac{1}{2}h^2 H_{s,t} - \bE\bigg[\int_s^u W_{s,r}(r-s)\m dr\m\Big|\, W_{s,u}\m, H_{s,u}\m\bigg] - W_{s,u}\int_u^t (r-s)\m dr\\
&\mmmm - \bE\bigg[\int_u^t W_{u,r}(r-s)\m dr\m\Big|\,  W_{u,t}\m, H_{u,t}\m\bigg]\\
&\mm = \frac{1}{3}h^2 W_{s,t} + \frac{1}{2}h^2 H_{s,t} - \int_s^u\bE\big[W_{s,r}\m\big|\, W_{s,u}\m, H_{s,u}\m\big](r-s)\m dr - \frac{3}{8}h^2W_{s,u}\\
&\mmmm - \int_u^t\bE\big[W_{u,r}\m\big|\, W_{u,t}\m, H_{u,t}\m\big](r-u)\m dr - \int_u^t W_{u,r}(u-s)\m dr.
\end{align*}
In \cite{foster2020OptimalPolynomial}, it was shown that $\bE\big[W_{s,r}\m|\, W_{s,t}\m, H_{s,t}\m\big] = \frac{r-s}{t-s}\m W_{s,t} + \frac{6(r-s)(t-r)}{(t-s)^2}\m H_{s,t}$ for $r\in[s,t]$.
Therefore, plugging this into the previous equation gives
\begin{align*}
&h^2\m\bE\Big[K_{s,t}\,\big|\, W_{s,u}\m, W_{u,t}\m, H_{s,u}\m, H_{u,t}\m\Big]\\
& = \frac{1}{3}h^2 W_{s,t} + \frac{1}{2}h^2 H_{s,t} - \frac{1}{12}h^2 W_{s,u} - \frac{1}{8}h^2 H_{s,u} - \frac{3}{8}h^2W_{s,u}\\
&\mm - \frac{1}{12}h^2 W_{u,t} - \frac{1}{8}h^2 H_{u,t} - \frac{1}{2}h\bigg(\frac{1}{4}hW_{u,t} + \frac{1}{2}hH_{u,t}\bigg)\\[3pt]
& = \frac{1}{2}h^2 H_{s,t} - \Big(\frac{1}{4}h^2 H_{s,u} + \frac{1}{4}h^2 H_{u,t} + \frac{1}{8}h^2 W_{u,t} - \frac{1}{8}h^2 W_{s,u}\Big) + \frac{1}{8}h^2 H_{s,u} - \frac{1}{8}h^2 H_{u,t}\m.
\end{align*}
By equation (\ref{eq:st_levy_area_relation}) in the previous proof, we see that the first two terms cancel. Thus
\begin{equation*}
\bE\big[K_{s,t}\,\big|\, W_{s,u}\m, W_{u,t}\m, H_{s,u}\m, H_{u,t}\m\big] = \frac{1}{8} N_{s,t}\m,
\end{equation*}
and so the desired result (\ref{eq:K_mean}) now follows as
\begin{align*}
\bE\big[K_{s,t}\,|\, W_{s,t}\m, H_{s,t}\m, n_{s,t}\m\big] & = \bE\big[\bE\big[K_{s,t}\,\big|\, W_{s,u}\m, W_{u,t}\m, H_{s,u}\m, H_{u,t}\m\big]\,|\, W_{s,t}\m, H_{s,t}\m, n_{s,t}\m\big]\\[3pt]
& = \frac{1}{8}\m\bE\big[N_{s,t}\,|\, W_{s,t}\m, H_{s,t}\m, n_{s,t}\m\big]\\
& = \frac{1}{8\sqrt{6\pi}}n_{s,t}h^\frac{1}{2}\m,
\end{align*}
by the independence of $\big(W_{s,t}\m, H_{s,t}\m, N_{s,t}\big)$ and equation (\ref{eq:half_normal_odd_moments}), which were established in the proof of Theorem \ref{thm:new_sst_estimator}. Finally, we note that $K_{s,t}^2$ does not change if $W$ is replaced by $-W$, whereas $n_{s,t}$ changes sign when the Brownian motion is ``flipped''.
So by the symmetry of $W$, the random variables $K_{s,t}^2$ and $n_{s,t}$ are uncorrelated. Thus
\begin{align*}
\underbrace{\bE\big[K_{s,t}^2 n_{s,t}\big]}_{=\,0} & = \frac{1}{2}\m\bE\big[K_{s,t}^2\m|\m n_{s,t} = 1\m\big] + \frac{1}{2}\m\bE\big[-K_{s,t}^2\m|\m n_{s,t} = -1\big],\\
\underbrace{\bE\big[K_{s,t}^2\big]}_{=\,\frac{1}{720} h} & = \frac{1}{2}\m\bE\big[K_{s,t}^2\m|\m n_{s,t} = 1\m\big] + \frac{1}{2}\m\bE\big[K_{s,t}^2\m|\m n_{s,t} = -1\m\big],
\end{align*}
gives the desired conditional moment (\ref{eq:K_second_moment}).
\end{proof}\smallbreak

Finally, using these optimal estimators for $L_{s,t}$ and $K_{s,t}\m$, we give the theoretical justification for the choices of piecewise linear paths previously used in (\ref{eq:high_order_strang2}) and (\ref{eq:shifted_ode_nonlinear}).
These paths match $\bE\big[L_{s,t}\m|\m W_{s,t}\m, H_{s,t}\m, n_{s,t}\big]$ and correlate with $\bE\big[K_{s,t}\m|\m W_{s,t}\m, H_{s,t}\m, n_{s,t}\big]$.

\begin{theorem}\label{thm:piece_linear_path_proof}
Consider the $(W_{s,t}\m, H_{s,t}\m, n_{s,t})$-measurable piecewise linear paths $\gamma=(\gamma^\tau, \gamma^{\m\omega}) :[0,1]\rightarrow\R^2\m$, $\m\widetilde{\gamma}=(\widetilde{\gamma}^\tau, \widetilde{\gamma}^{\m\omega}) :[0,1]\rightarrow\R^2\m$ given by $\m\gamma_0 = \widetilde{\gamma}_0 = (s ,W_s)\m$ and
\begin{align}
    \gamma_{r_i, r_{i+1}} & :=
        \begin{cases}
            \big(0, A_{s,t} \big),  & \text{if }\,i=0
            \\[6pt]
            \big(h, B_{s,t}\big), & \text{if }\,i=1
            \\[6pt]
            \big(0, W_{s,t} - A_{s,t} - B_{s,t}\big), & \text{if }\,i=2,             
        \end{cases} \label{eq:3_piece_path}
\end{align}
\begin{align}
    \widetilde{\gamma}_{r_i, r_{i+1}} & :=
        \begin{cases}
            \big(0, C_{s,t} \big),  & \text{if }\,i=0
            \\[6pt]
            \big(\frac{1}{2}h, 0 \big),  & \text{if }\,i=1
            \\[6pt]
            \big(0, D_{s,t}\big), & \text{if }\,i=2
            \\[6pt]
            \big(\frac{1}{2}h, 0 \big),  & \text{if }\,i=3
            \\[6pt]
            \big(0, W_{s,t} - C_{s,t} - D_{s,t}\big), & \text{if }\,i=4,             
        \end{cases}\label{eq:5_piece_path}
\end{align}
where $h = t - s$ and
\begin{align*}
    & \big(A_{s,t}\m, B_{s,t}\big)
    \\
    & =
    \argmin_{
    \substack{(A,\m B)\m\in\m\R^2\,\text{s.t.~constraints}\\[3pt] (\ref{eq:constraint1}),\,(\ref{eq:constraint2}),\, (\ref{eq:constraint3})
     \text{ hold}}
     }\,
     \bigg|\int_0^1 \gamma_{0,r}^\tau\m \gamma_{0, r}^{\m\omega}\, d\gamma^\tau_{r} - \bE\bigg[\int_s^t (u-s)W_{s,u}\m du \,\Big|\,  W_{s,t}, H_{s,t}, n_{s,t}\bigg]\bigg|\m,
     \\[6pt]
    & \big(C_{s,t}\m, D_{s,t}\big)
    \\
    & =
    \argmin_{
    \substack{(C,\m D)\m\in\m\R^2\,\text{s.t.~constraints}\\[3pt] (\ref{eq:constraint1}),\,(\ref{eq:constraint2}),\, (\ref{eq:constraint3})
     \text{ hold}}
     }\,
     \bigg|\int_0^1 \widetilde{\gamma}_{0,r}^\tau\m \widetilde{\gamma}_{0, r}^{\m\omega}\, d\m\widetilde{\gamma}^\tau_{r} - \bE\bigg[\int_s^t (u-s)W_{s,u}\m du \,\Big|\,  W_{s,t}, H_{s,t}, n_{s,t}\bigg]\bigg|\m.
\end{align*}
with the constraints (\ref{eq:constraint1}), (\ref{eq:constraint2}) and (\ref{eq:constraint3}) for the paths $\gamma$ and $\widetilde{\gamma}$ given by
\begin{align}
    \gamma_1^{\m\omega} - \gamma_0^{\m\omega}  
    & =
    \widetilde{\gamma}_1^{\m\omega} - \widetilde{\gamma}_0^{\m\omega}  
    =
    W_{s,t}\m,\label{eq:constraint1}
    \\[3pt]
    \int_0^1 \big(\gamma^{\m\omega}_r - \gamma^{\m\omega}_{0}\big)\m d\gamma^\tau_r 
    & 
    = 
    \int_0^1 \big(\widetilde{\gamma}^{\m\omega}_r - \widetilde{\gamma}^{\m\omega}_{0}\big)\m d\m\widetilde{\gamma}^\tau_r 
    =
    \int_s^t W_{s,u}\m du\m,\label{eq:constraint2}
    \\[3pt]
    \int_0^1 \big(\gamma^{\m\omega}_r - \gamma^{\m\omega}_{0}\big)^2\m d\gamma^\tau_r 
    & 
    = 
    \int_0^1 \big(\widetilde{\gamma}^{\m\omega}_r - \widetilde{\gamma}^{\m\omega}_{0}\big)^2\m d\m\widetilde{\gamma}^\tau_r 
    = 
    \bE\bigg[\int_s^t W_{s,u}^2\m du \,\Big|\, W_{s,t}\m, H_{s,t}\m, n_{s,t}\bigg].\label{eq:constraint3}
\end{align}
Then the first increments, $A_{s,t}$ and $C_{s,t}$\m,  of the piecewise linear paths $\gamma$ and $\widetilde{\gamma}$ are
\begin{align*}
A_{s,t} & := \frac{1}{2}W_{s,t} + H_{s,t} - \frac{1}{2}B_{s,t}\m,\\[3pt]
C_{s,t} & := \frac{1}{2}W_{s,t} + H_{s,t} - \frac{1}{2}D_{s,t}\m,
\end{align*}
where the second increments, $B_{s,t}$ and $D_{s,t}$\m, of the paths are given by the formulae
\begin{align*}
B_{s,t} & := \epsilon_{s,t}\bigg(W_{s,t}^{2}  + \frac{12}{5}H_{s,t}^{2} + \frac{4}{5}h - \frac{3}{\sqrt{6\pi}}h^{\frac{1}{2}}n_{s,t}  W_{s,t}\bigg)^{\frac{1}{2}},\\[3pt]
D_{s,t} & := \epsilon_{s,t}\bigg(\frac{1}{3}W_{s,t}^2  + \frac{4}{5} H_{s,t}^2 + \frac{4}{15}h - \frac{1}{\sqrt{6\pi}}n_{s,t}h^{\frac{1}{2}}W_{s,t}\bigg)^\frac{1}{2},\\[3pt]
\epsilon_{s,t} & := \sgn\bigg(W_{s,t} - \frac{3}{\sqrt{24\pi}}h^{\frac{1}{2}}n_{s,t}\bigg).
\end{align*}
\end{theorem}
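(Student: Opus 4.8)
The plan is to reduce every path integral in the statement to an elementary one-dimensional integral. Because $\gamma$ and $\widetilde{\gamma}$ are piecewise linear and their time components $\gamma^\tau,\widetilde{\gamma}^\tau$ are constant on all but one (resp.\ two) of their linear pieces, the integrals against $d\gamma^\tau$ and $d\widetilde{\gamma}^\tau$ appearing in the constraints (\ref{eq:constraint1})--(\ref{eq:constraint3}) and in the two objective functionals collapse to integrals over those ``time-moving'' pieces. Parameterising the middle piece of $\gamma$ by a fraction $\lambda\in[0,1]$ one has $\gamma^\tau_r-s=\lambda h$, $\gamma^{\m\omega}_r-\gamma^{\m\omega}_0=A_{s,t}+\lambda B_{s,t}$ and $d\gamma^\tau(r)=h\,d\lambda$; on the second and fourth pieces of $\widetilde{\gamma}$ the space component is instead constant, equal to $C_{s,t}$ and $C_{s,t}+D_{s,t}$ respectively, with $d\widetilde{\gamma}^\tau(r)=\tfrac12 h\,d\lambda$. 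Performing these integrals, constraint (\ref{eq:constraint1}) holds automatically by the choice of the final increments $W_{s,t}-A_{s,t}-B_{s,t}$ and $W_{s,t}-C_{s,t}-D_{s,t}$, while (\ref{eq:constraint2}) becomes the linear relation $h(A_{s,t}+\tfrac12 B_{s,t})=\int_s^t W_{s,u}\,du$ (resp.\ $h(C_{s,t}+\tfrac12 D_{s,t})=\int_s^t W_{s,u}\,du$). Since $\int_s^t W_{s,u}\,du=\tfrac12 hW_{s,t}+hH_{s,t}$ by Definition~\ref{def:st_levyarea}, this gives exactly $A_{s,t}=\tfrac12 W_{s,t}+H_{s,t}-\tfrac12 B_{s,t}$ and $C_{s,t}=\tfrac12 W_{s,t}+H_{s,t}-\tfrac12 D_{s,t}$, the claimed formulas for the first increments.

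Next I would substitute these linear relations into (\ref{eq:constraint3}). Using the identities $A^2+AB+\tfrac13 B^2=(A+\tfrac12 B)^2+\tfrac1{12}B^2$ and $C^2+CD+\tfrac12 D^2=(C+\tfrac12 D)^2+\tfrac14 D^2$, together with $A_{s,t}+\tfrac12 B_{s,t}=C_{s,t}+\tfrac12 D_{s,t}=\tfrac12 W_{s,t}+H_{s,t}$, constraint (\ref{eq:constraint3}) reduces to
\[
h\Big(\tfrac14 W_{s,t}^2+W_{s,t}H_{s,t}+H_{s,t}^2+\tfrac1{12}B_{s,t}^2\Big)=\bE\Big[\int_s^t W_{s,r}^2\,dr\,\Big|\,W_{s,t},H_{s,t},n_{s,t}\Big],
\]
and to the same equation with $\tfrac14 D_{s,t}^2$ in place of $\tfrac1{12}B_{s,t}^2$. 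By (\ref{eq:time_integral_relation4}) and (\ref{eq:L_mean}) the right-hand side equals $\tfrac13 hW_{s,t}^2+hW_{s,t}H_{s,t}+\tfrac1{15}h^2+\tfrac65 hH_{s,t}^2-\tfrac1{4\sqrt{6\pi}}n_{s,t}h^{3/2}W_{s,t}$, so cancelling the common terms gives $B_{s,t}^2=W_{s,t}^2+\tfrac{12}{5}H_{s,t}^2+\tfrac45 h-\tfrac3{\sqrt{6\pi}}n_{s,t}h^{1/2}W_{s,t}$ and $D_{s,t}^2=\tfrac13 B_{s,t}^2$, which are the stated formulas up to sign. Completing the square and using $n_{s,t}^2=1$, one gets $B_{s,t}^2=\big(W_{s,t}-\tfrac3{\sqrt{24\pi}}n_{s,t}h^{1/2}\big)^2+\tfrac{12}{5}H_{s,t}^2+\big(\tfrac45-\tfrac3{8\pi}\big)h\ge 0$ since $\tfrac45>\tfrac3{8\pi}$, so the constraint set is non-empty and reduces to the binary choice $B_{s,t}=\pm(\cdots)^{1/2}$ (and likewise for $D_{s,t}$).

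It then remains to determine the sign through the $\argmin$, and the crucial point is that the objective functional is \emph{affine} in this sign variable. Integrating over the time-moving pieces and inserting the relations from the first paragraph gives $\int_0^1\gamma^\tau_{0,r}\gamma^{\m\omega}_{0,r}\,d\gamma^\tau_r=h^2\big(\tfrac14 W_{s,t}+\tfrac12 H_{s,t}+\tfrac1{12}B_{s,t}\big)$ and $\int_0^1\widetilde{\gamma}^\tau_{0,r}\widetilde{\gamma}^{\m\omega}_{0,r}\,d\widetilde{\gamma}^\tau_r=h^2\big(\tfrac14 W_{s,t}+\tfrac12 H_{s,t}+\tfrac18 D_{s,t}\big)$. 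For the target, expanding Definition~\ref{def:stt_levy_area} yields the deterministic identity $\int_s^t (u-s)W_{s,u}\,du=\tfrac12 h\int_s^t W_{s,u}\,du+\tfrac1{12}h^2 W_{s,t}-h^2 K_{s,t}$; taking conditional expectations and using (\ref{eq:K_mean}) together with $\int_s^t W_{s,u}\,du=\tfrac12 hW_{s,t}+hH_{s,t}$ gives $\bE\big[\int_s^t (u-s)W_{s,u}\,du\mid W_{s,t},H_{s,t},n_{s,t}\big]=h^2\big(\tfrac13 W_{s,t}+\tfrac12 H_{s,t}-\tfrac1{8\sqrt{6\pi}}n_{s,t}h^{1/2}\big)$. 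Subtracting and using $\sqrt{24\pi}=2\sqrt{6\pi}$, the objective for $\gamma$ equals $\tfrac{h^2}{12}\big|B_{s,t}-(W_{s,t}-\tfrac3{\sqrt{24\pi}}n_{s,t}h^{1/2})\big|$ and that for $\widetilde{\gamma}$ equals $\tfrac{h^2}{8}\big|D_{s,t}-\tfrac23(W_{s,t}-\tfrac3{\sqrt{24\pi}}n_{s,t}h^{1/2})\big|$. Since $|x-c|$ over $x\in\{+|x|,-|x|\}$ is minimised at $x=|x|\,\sgn(c)$, both minimisers are attained by taking $\epsilon_{s,t}=\sgn\big(W_{s,t}-\tfrac3{\sqrt{24\pi}}n_{s,t}h^{1/2}\big)$, recovering the asserted $B_{s,t}$, $D_{s,t}$ and $\epsilon_{s,t}$. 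The main obstacle is not conceptual but organisational: one must parameterise correctly the pieces on which the time component varies, recall the exact conditional second moment of $\int_s^t W_{s,r}^2\,dr$, and exploit the affine dependence of the objective on the free sign to collapse the $\argmin$ to the clean $\sgn$ formula.
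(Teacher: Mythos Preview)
Your proposal is correct and follows essentially the same route as the paper's proof: compute the path integrals on the time-moving pieces, use constraint (\ref{eq:constraint2}) together with $\int_s^t W_{s,u}\,du=\tfrac12 hW_{s,t}+hH_{s,t}$ to obtain the linear relations for $A_{s,t},C_{s,t}$, use the completing-the-square identities to reduce (\ref{eq:constraint3}) to the formulae for $B_{s,t}^2,D_{s,t}^2$ via (\ref{eq:L_mean}), and then select the sign through the $\argmin$ using (\ref{eq:K_mean}). Your presentation is in fact slightly tidier than the paper's in two places: you give the identity $\int_s^t (u-s)W_{s,u}\,du=\tfrac12 h\int_s^t W_{s,u}\,du+\tfrac1{12}h^2W_{s,t}-h^2K_{s,t}$ directly (the paper derives the conditional expectation through a longer rearrangement), and you explicitly verify $B_{s,t}^2\ge 0$ so that the constraint set is non-empty, which the paper leaves implicit.
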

\begin{proof}
Since $\gamma$ and $\widetilde{\gamma}$ are piecewise linear, it is simple to compute the integrals
\begin{align*}
    \int_0^1 (\gamma_r^{\m\omega} - \gamma_0^{\m\omega})\m d\gamma^\tau_{r} & 
    =
    h\Big(A_{s,t} + \frac{1}{2}B_{s,t}\Big),
    \\
    \int_0^1 (\gamma_r^{\m\omega} - \gamma_0^{\m\omega})^2\m d\gamma^\tau_{r} & 
    =
    h\Big(A_{s,t}^2 + A_{s,t}B_{s,t} + \frac{1}{3}B_{s,t}^2\Big),
    \\
    \int_0^1 (\widetilde{\gamma}_r^{\m\omega} - \widetilde{\gamma}_0^{\m\omega})\m d\m\widetilde{\gamma}^\tau_{r}  & 
    =
    h\Big(C_{s,t} + \frac{1}{2}D_{s,t}\Big),
    \\
    \int_0^1 (\widetilde{\gamma}_r^{\m\omega} - \widetilde{\gamma}_0^{\m\omega})^2\m d\m\widetilde{\gamma}^\tau_{r}  & = h\Big(C_{s,t}^2 +  C_{s,t}D_{s,t} + \frac{1}{2}D_{s,t}^2\Big).
\end{align*}
It follows from the constraints (\ref{eq:constraint2}) and (\ref{eq:constraint3}) with equations (\ref{eq:time_integral_relation3}) and (\ref{eq:time_integral_relation4}) that
\begin{align*}
h\Big(A_{s,t} + \frac{1}{2}B_{s,t}\Big) & = \frac{1}{2}hW_{s,t} + hH_{s,t}\m,\\[3pt]
 h\Big(A_{s,t}^2 + A_{s,t}B_{s,t} + \frac{1}{3}B_{s,t}^2\Big) & = \frac{1}{3}hW_{s,t}^2 + hW_{s,t}H_{s,t} + 2\m\bE\big[L_{s,t} \,\big|\, W_{s,t}\m, H_{s,t}\m, n_{s,t}\big],\\[3pt]
h\Big(C_{s,t} + \frac{1}{2}D_{s,t}\Big) & = \frac{1}{2}hW_{s,t} + hH_{s,t}\m,\\[3pt]
 h\Big(C_{s,t}^2 + C_{s,t}D_{s,t} + \frac{1}{2}D_{s,t}^2\Big) & = \frac{1}{3}hW_{s,t}^2 + hW_{s,t}H_{s,t} + 2\m\bE\big[L_{s,t} \,\big|\, W_{s,t}\m, H_{s,t}\m, n_{s,t}\big],
\end{align*}
So by Theorem \ref{thm:new_sst_estimator}, substituting in the formula for the conditional expectation yields
\begin{align*}
A_{s,t} + \frac{1}{2}B_{s,t} & = \frac{1}{2}W_{s,t} + H_{s,t}\m,\\[3pt]
A_{s,t}^2 + A_{s,t}B_{s,t} + \frac{1}{3}B_{s,t}^2 & = \frac{1}{3}W_{s,t}^2 + W_{s,t}H_{s,t} + \frac{1}{15}h + \frac{6}{5} H_{s,t}^2 - \frac{1}{4\sqrt{6\pi}}n_{s,t}h^{\frac{1}{2}}W_{s,t}\m,\\[3pt]
C_{s,t} + \frac{1}{2}D_{s,t} & = \frac{1}{2}W_{s,t} + H_{s,t}\m,\\[3pt]
C_{s,t}^2 + C_{s,t}D_{s,t} + \frac{1}{2}D_{s,t}^2 & = \frac{1}{3}W_{s,t}^2 + W_{s,t}H_{s,t} + \frac{1}{15}h + \frac{6}{5} H_{s,t}^2 - \frac{1}{4\sqrt{6\pi}}n_{s,t}h^{\frac{1}{2}}W_{s,t}\m,
\end{align*}
Since $a^2 + ab + \frac{1}{3}b^2 = \big(a + \frac{1}{2}b\big)^2 + \frac{1}{12}b^2$ and $c^2 + cd + \frac{1}{3}d^2 = \big(c + \frac{1}{2}d\big)^2 + \frac{1}{4}d^2$, this gives
\begin{align*}
\frac{1}{12}B_{s,t}^2 & = \frac{1}{3}W_{s,t}^2 + W_{s,t}H_{s,t} + \frac{1}{15}h + \frac{6}{5} H_{s,t}^2 - \frac{1}{4\sqrt{6\pi}}n_{s,t}h^{\frac{1}{2}}W_{s,t} - \Big(\frac{1}{2}W_{s,t} + H_{s,t}\Big)^2,\\[3pt]
\frac{1}{4}D_{s,t}^2 & = \frac{1}{3}W_{s,t}^2 + W_{s,t}H_{s,t} + \frac{1}{15}h + \frac{6}{5} H_{s,t}^2 - \frac{1}{4\sqrt{6\pi}}n_{s,t}h^{\frac{1}{2}}W_{s,t} - \Big(\frac{1}{2}W_{s,t} + H_{s,t}\Big)^2,
\end{align*}
and so there are two possible values of $B_{s,t}$ and $D_{s,t}$ where (\ref{eq:constraint2}) and (\ref{eq:constraint3}) hold,
\begin{align*}
B_{s,t} & = \pm \sqrt{W_{s,t}^2  + \frac{12}{5} H_{s,t}^2 + \frac{4}{5}h - \frac{3}{\sqrt{6\pi}}n_{s,t}h^{\frac{1}{2}}W_{s,t}}\,,\\[3pt]
D_{s,t} & = \pm \sqrt{\frac{1}{3}W_{s,t}^2  + \frac{4}{5} H_{s,t}^2 + \frac{4}{15}h - \frac{1}{\sqrt{6\pi}}n_{s,t}h^{\frac{1}{2}}W_{s,t}}\,.
\end{align*}
Thus, if equations (\ref{eq:constraint2}) and (\ref{eq:constraint3}) are satisfied, we have
\begin{align*}
    \int_0^1 (\gamma_r^\tau - \gamma_0^\tau)(\gamma_r^{\m\omega} - \gamma_0^{\m\omega})\m d\gamma^\tau_{r} & 
    =
    \frac{1}{2}h^2 A_{s,t} + \frac{1}{3} h^2 B_{s,t}
    \\
    &\hspace*{-25mm} 
    = \frac{1}{4}h^2 W_{s,t} + \frac{1}{2}h^2 H_{s,t} \pm \frac{1}{12}h^2\sqrt{W_{s,t}^2  + \frac{12}{5} H_{s,t}^2 + \frac{4}{5}h - \frac{3}{\sqrt{6\pi}}n_{s,t}h^{\frac{1}{2}}W_{s,t}}\m,
    \\[3pt]
    \int_0^1 (\widetilde{\gamma}_r^\tau - \widetilde{\gamma}_0^\tau)(\widetilde{\gamma}_r^{\m\omega} - \widetilde{\gamma}_0^{\m\omega})\m d\m\widetilde{\gamma}^\tau_{r} & 
    =
    \frac{1}{2}h^2 C_{s,t} + \frac{3}{8} h^2 D_{s,t}
    \\
    &\hspace*{-25mm} 
    =    
    \frac{1}{4}h^2 W_{s,t} + \frac{1}{2}h^2 H_{s,t} \pm \frac{1}{8}h^2\sqrt{\frac{1}{3}W_{s,t}^2  + \frac{4}{5} H_{s,t}^2 + \frac{4}{15}h - \frac{1}{\sqrt{6\pi}}n_{s,t}h^{\frac{1}{2}}W_{s,t}}\m.
\end{align*}
Using Theorem \ref{thm:new_stt_estimator}, we can estimate the corresponding integral of Brownian motion.
\begin{align*}
    &\bE\bigg[\int_s^t\n (u-s)W_{s,u}\m du\,\Big| \, W_{s,t}\m, H_{s,t}\m, n_{s,t}\bigg]\\
    &\mm\mmm = \bE\bigg[\frac{1}{2}h\int_s^t W_{s,u}\,du - \int_s^t \frac{u-s}{h}\,W_{s,t}\bigg(\frac{1}{2}h - (u-s)\bigg) du\,\Big| \, W_{s,t}\m, H_{s,t}\m, n_{s,t}\bigg]\\
    &\mmmm\mmm - \bE\bigg[\int_s^t \bigg(W_{s,u} - \frac{u-s}{h}\,W_{s,t}\bigg)\bigg(\frac{1}{2}h - (u-s)\bigg) du\,\Big| \, W_{s,t}\m, H_{s,t}\m, n_{s,t}\bigg]\\[3pt]
    &\mm\mmm =  \frac{1}{3}h^2 W_{s,t} + \frac{1}{2}h^2 H_{s,t} - h^2\m \bE\big[K_{s,t} \,|\, n_{s,t}\big]\\[3pt]
    &\mm\mmm = \frac{1}{3}h^2 W_{s,t} + \frac{1}{2}h^2 H_{s,t} - \frac{1}{8\sqrt{6\pi}}\m n_{s,t} h^{\frac{5}{2}}.
\end{align*}
Taking the difference between these integrals gives
\begin{align*}
    &\Bigg|\int_0^1 (\gamma_r^\tau - \gamma_0^\tau)(\gamma_r^{\m\omega} - \gamma_0^{\m\omega})\m d\gamma^\tau_{r}  - \bE\bigg[\int_s^t\n (u-s)W_{s,u}\m du\,\Big| \, W_{s,t}\m, H_{s,t}\m, n_{s,t}\bigg]\Bigg|\\
    & = \Bigg|\m - \frac{1}{12}h^2 W_{s,t} + \frac{1}{8\sqrt{6\pi}}\m n_{s,t} h^{\frac{5}{2}} \pm \frac{1}{12}h^2\sqrt{W_{s,t}^2  + \frac{12}{5} H_{s,t}^2 + \frac{4}{5}h - \frac{3}{\sqrt{6\pi}}n_{s,t}h^{\frac{1}{2}}W_{s,t}}\,\m\Bigg|\m,
\end{align*}
and
\begin{align*}
    &\Bigg|\int_0^1 (\widetilde{\gamma}_r^\tau - \widetilde{\gamma}_0^\tau)(\widetilde{\gamma}_r^{\m\omega} - \widetilde{\gamma}_0^{\m\omega})\m d\m\widetilde{\gamma}^\tau_{r} - \bE\bigg[\int_s^t\n (u-s)W_{s,u}\m du\,\Big| \, W_{s,t}\m, H_{s,t}\m, n_{s,t}\bigg]\Bigg|\\
    & = \Bigg|\m - \frac{1}{12}h^2 W_{s,t} + \frac{1}{8\sqrt{6\pi}}\m n_{s,t} h^{\frac{5}{2}} \pm \frac{1}{8}h^2\sqrt{\frac{1}{3}W_{s,t}^2  + \frac{4}{5} H_{s,t}^2 + \frac{4}{15}h - \frac{1}{\sqrt{6\pi}}n_{s,t}h^{\frac{1}{2}}W_{s,t}}\,\m\Bigg|\m.
\end{align*}
Since we would like the path $\gamma$ to minimise this quantity, the optimal choice of sign for the square root term is $\epsilon_{s,t} := \sgn\big(W_{s,t} - \frac{3}{\sqrt{24\pi}}h^{\frac{1}{2}}n_{s,t}\big)$, and the result follows.
\end{proof}
\newpage
\section{Conditional moments of proposed CIR splitting method}\label{append:cir_approx}In this section, we will compute the conditional mean and variance of the proposed splitting method (\ref{eq:cir_splitting}) for the CIR model (\ref{eq:cir}). Recall that this method is given by
\begin{align}\label{append:cir_splitting}
Y_k^{(1)} & := e^{-\frac{3-\sqrt{3}}{6} a h}Y_k + \widetilde{b}\big(1 - e^{-\frac{3-\sqrt{3}}{6} a h}\big),\nonumber\\
Y_k^{(2)} & := \bigg(\sqrt{Y_k^{(1)}} + \frac{\sigma}{2}\Big(\frac{1}{2}W_k + \sqrt{3}H_k\Big)\bigg)^2,\nonumber\\
Y_k^{(3)}  & := e^{-\frac{\sqrt{3}}{3} a h}Y_k^{(2)} + \widetilde{b}\big(1 - e^{-\frac{\sqrt{3}}{3} a h}\big),\nonumber\\
Y_k^{(4)} & := \bigg(\sqrt{Y_k^{(3)}} + \frac{\sigma}{2}\Big(\frac{1}{2}W_k - \sqrt{3}H_k\Big)\bigg)^2,\nonumber\\
Y_{k+1} & := e^{-\frac{3-\sqrt{3}}{6} a h}Y_k^{(4)} + \widetilde{b}\big(1 - e^{-\frac{3-\sqrt{3}}{6} a h}\big).\\[-24pt]\nonumber
\end{align}
\begin{theorem}\label{append:cir_thm}
The numerical solution given by (\ref{append:cir_splitting}) has the following moments:
\begin{align}
\bE[Y_{k+1}|Y_k] & = e^{-ah}Y_k + b\big(1-e^{-ah}\big) + R_k^E\m,\label{append:cir_mean}\\[3pt]
\var(Y_{k+1}|Y_k) & = \frac{\sigma^2}{a}\big(e^{- a h} - e^{-2a h}\big)Y_k + \frac{b\sigma^2}{2a}\big(1-e^{-ah}\big)^2 + R_k^V\m,\label{append:cir_var}
\end{align}
where the remainder terms $R^E$ and $R_k^V$ are given by
\begin{align*}
R_k^E & := \frac{1}{4}\sigma^2\bigg(\frac{1}{2}\big( e^{-\frac{3+\sqrt{3}}{6} a h} + e^{-\frac{3-\sqrt{3}}{6} a h}\big)h - \frac{1 - e^{-a h}}{a}\bigg),\\
R_k^V & := \sigma^2\bigg(\frac{1}{2}\big(e^{-\frac{9+\sqrt{3}}{6}ah} + e^{-\frac{9-\sqrt{3}}{6}ah}\big)h - \frac{1}{a}\big(e^{- a h} - e^{-2a h}\big)\bigg)Y_k\\
&\mm  + \frac{1}{2}\m\widetilde{b}\m\sigma^2\Big(\big(e^{-\frac{3+\sqrt{3}}{3} a h} + e^{-\frac{3-\sqrt{3}}{3} a h} - e^{-\frac{9+\sqrt{3}}{6} a h} - e^{-\frac{9-\sqrt{3}}{6} a h}\big)h - \frac{1}{a}\big(1-e^{-ah}\big)^2\Big)\\
&\mm + \frac{1}{8}\sigma^4\bigg(\frac{1}{2}\Big(e^{-ah} + \frac{1}{2}\big(e^{-\frac{3+\sqrt{3}}{3} a h} + e^{-\frac{3-\sqrt{3}}{3} a h}\big)\Big)h^2 - \frac{1}{a^2}\big(1-e^{-ah}\big)^2\bigg),
\end{align*}
which can be estimated as $\m\|R_n^E\|_{L^2(\P)}\m, \|R_n^V\|_{L^2(\P)}\sim O(h^5)\m$ for sufficiently small $h$.
\end{theorem}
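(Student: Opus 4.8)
\emph{Set-up and reduction to a moment recursion.} The plan is to compute both conditional moments exactly by propagating them through the five stages of (\ref{append:cir_splitting}) one at a time, and then to bound the remainders by recognising them as Gauss--Legendre quadrature errors. Write $\mu_1 := e^{-\frac{3-\sqrt{3}}{6}ah}$ and $\mu_2 := e^{-\frac{\sqrt{3}}{3}ah}$, and record the identities $\mu_1^2\mu_2 = e^{-ah}$, $\mu_1\mu_2 = e^{-\frac{3+\sqrt{3}}{6}ah}$, $\mu_1^2 = e^{-\frac{3-\sqrt{3}}{3}ah}$, $\mu_1^2\mu_2^2 = e^{-\frac{3+\sqrt{3}}{3}ah}$, $\mu_1^3\mu_2 = e^{-\frac{9-\sqrt{3}}{6}ah}$, $\mu_1^3\mu_2^2 = e^{-\frac{9+\sqrt{3}}{6}ah}$, all consequences of $2\cdot\frac{3-\sqrt{3}}{6}+\frac{\sqrt{3}}{3}=1$. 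Set $\xi_1 := \frac{\sigma}{2}\big(\frac{1}{2}W_k+\sqrt{3}\,H_k\big)$ and $\xi_2 := \frac{\sigma}{2}\big(\frac{1}{2}W_k-\sqrt{3}\,H_k\big)$, so that the two diffusion stages of (\ref{append:cir_splitting}) read $z\mapsto(\sqrt{z}+\xi_1)^2$ and $z\mapsto(\sqrt{z}+\xi_2)^2$. Since $W_k\sim\mathcal{N}(0,h)$ and $H_k\sim\mathcal{N}(0,\frac{1}{12}h)$ are independent, $\xi_1$ and $\xi_2$ are jointly Gaussian with $\cov(\xi_1,\xi_2)=\frac{\sigma^2}{4}\big(\frac14 h-3\cdot\frac{1}{12}h\big)=0$; hence they are i.i.d.\ $\mathcal{N}\big(0,\frac{\sigma^2}{8}h\big)$ and independent of $\mathcal{F}_k:=\sigma(Y_k)$. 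In particular, conditionally on $\mathcal{F}_k$, $\bE[\xi_i]=\bE[\xi_i^3]=0$, $\bE[\xi_i^2]=\frac{\sigma^2}{8}h$ and $\bE[\xi_i^4]=\frac{3\sigma^4}{64}h^2$.

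\emph{Propagation.} Because $(\sqrt{z}+\xi)^2 = z+2\xi\sqrt{z}+\xi^2$, a diffusion stage acts on conditional moments by $\bE[(\sqrt{z}+\xi)^2\,|\,\cdot]=\bE[z\,|\,\cdot]+\bE[\xi^2]$ and, expanding the square and discarding every summand carrying an odd power of $\xi$, $\var\big((\sqrt{z}+\xi)^2\,|\,\cdot\big)=\var(z\,|\,\cdot)+4\,\bE[\xi^2]\,\bE[z\,|\,\cdot]+2\,\bE[\xi^2]^2$; an affine drift stage $z\mapsto\mu z+c$ transforms mean and variance by $\mu(\cdot)+c$ and $\mu^2(\cdot)$. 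The crucial structural point is that this recursion never needs the half-integer moments $\bE[\sqrt{\,\cdot\,}\,]$, since every term in $(\sqrt{z}+\xi)^2$ and in its square that involves a half-integer power of $z$ is multiplied by an odd power of $\xi$. Iterating from the $\mathcal{F}_k$-measurable $Y^{(1)}=\mu_1 Y_k+\widetilde{b}(1-\mu_1)$ through the five stages and then using $Y_{k+1}=\mu_1 Y^{(4)}+\widetilde{b}(1-\mu_1)$ gives
\begin{align*}
\bE[Y_{k+1}\,|\,\mathcal{F}_k] &= \mu_1^2\mu_2\,Y_k+\widetilde{b}\big(1-\mu_1^2\mu_2\big)+\frac{\sigma^2}{8}h\,\mu_1(1+\mu_2),\\
\var(Y_{k+1}\,|\,\mathcal{F}_k) &= \frac{\sigma^2}{2}h\,\mu_1^3\mu_2(1+\mu_2)\,Y_k+\frac{\sigma^2}{2}h\,\mu_1^2\big[(\mu_2+\mu_2^2)(1-\mu_1)+(1-\mu_2)\big]\widetilde{b}+\frac{\sigma^4}{32}h^2\,\mu_1^2(1+\mu_2)^2.
\end{align*}
Substituting $\mu_1^2\mu_2=e^{-ah}$ and $\widetilde{b}=b-\frac{\sigma^2}{4a}$ and regrouping: in the mean, the $\widetilde{b}$-terms telescope to $b(1-e^{-ah})$ together with $-\frac{\sigma^2}{4a}(1-e^{-ah})$, and combining the latter with the $\sigma^2$-term reproduces exactly the stated $R_k^E$; an entirely parallel (if longer) rearrangement of the variance, using also $\mu_1^3\mu_2=e^{-\frac{9-\sqrt3}{6}ah}$, $\mu_1^3\mu_2^2=e^{-\frac{9+\sqrt3}{6}ah}$, $\mu_1^2=e^{-\frac{3-\sqrt3}{3}ah}$ and $\mu_1^2\mu_2^2=e^{-\frac{3+\sqrt3}{3}ah}$, splits it as the exact CIR variance $\frac{\sigma^2}{a}(e^{-ah}-e^{-2ah})Y_k+\frac{b\sigma^2}{2a}(1-e^{-ah})^2$ plus the stated $R_k^V$.

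\emph{Size of the remainders.} After factoring out a bounded exponential, each scalar function of $h$ occurring in $R_k^E$ and $R_k^V$ is the error of two-point Gauss--Legendre quadrature --- one-dimensional for $R_k^E$ and for two of the three pieces of $R_k^V$, two-dimensional (a product rule) for the third --- applied to an integral $\int_0^h\psi(u)\,du$ or $\iint_{[0,h]^2}\psi(u)\psi(v)\,du\,dv$ whose integrand $\psi$ is a $C^\infty$, uniformly $C^4$-bounded function built from $u\mapsto e^{-au}$; the relevant nodes are $\frac12\pm\frac{1}{2\sqrt{3}}=\frac{3\pm\sqrt{3}}{6}$, which are precisely the cumulative positions of the time component of the path (\ref{eq:high_order_strang1}). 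Since the two-point Gauss rule is exact on cubics ($\frac12(\beta_+^m+\beta_-^m)=\frac{1}{m+1}$ for $m=0,1,2,3$ with $\beta_\pm=\frac{3\pm\sqrt3}{6}$), each such error is $O(h^5)$ (the product-rule one is in fact $O(h^6)$); equivalently, expanding both sides in powers of $h$ shows the coefficients of $h^0,\dots,h^4$ cancel. Hence $R_k^E$ is deterministic of size $O(h^5)$, so $\|R_k^E\|_{L^2(\P)}=O(h^5)$. For $R_k^V=c_1(h)Y_k+c_0(h)$ with deterministic $c_1(h),c_0(h)=O(h^5)$, the recursions above also yield $\bE[Y_{k+1}^2\,|\,\mathcal{F}_k]\le(1+Ch)Y_k^2+C$ for $h$ small, so $\sup_{0\le k\le N}\bE[Y_k^2]<\infty$ by a discrete Gr\"{o}nwall argument over the finite horizon $kh\le T$, whence $\|R_k^V\|_{L^2(\P)}\le|c_1(h)|\,\bE[Y_k^2]^{1/2}+|c_0(h)|=O(h^5)$.

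\emph{Main obstacle.} The delicate part is the propagation step: carrying the exponential factors $\mu_1,\mu_2$ correctly through all five stages, and then matching the residual terms to the precise closed forms of $R_k^E$ and $R_k^V$, is long and error-prone bookkeeping. The one observation that makes it manageable --- and is worth isolating at the outset --- is the odd-in-$\xi$ cancellation at each diffusion stage, which removes every square-root term and leaves only the elementary Gaussian moments $\bE[\xi_i^2]$ and $\bE[\xi_i^4]$; this is why no intractable quantity such as $\bE[\sqrt{Y^{(j)}}\,|\,\mathcal{F}_k]$ ever appears.
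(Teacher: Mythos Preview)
Your proof is correct and, for the moment computation, follows essentially the same route as the paper: both arguments observe that $\tfrac12 W_k\pm\sqrt3\,H_k$ are independent centred Gaussians, use this to kill every term carrying an odd power of $\xi_i$ (so that no half-integer moments $\bE[\sqrt{Y^{(j)}}\,]$ appear), and then propagate the conditional mean and variance through the five affine/diffusion stages. Your abstraction via $\mu_1,\mu_2,\xi_1,\xi_2$ is tidier than the paper's step-by-step expansion but structurally identical.

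Where you genuinely diverge is in the $O(h^5)$ estimate. The paper simply Taylor expands each exponential combination out to fourth order and observes the cancellation. You instead identify each scalar remainder as a two-point Gauss--Legendre quadrature error for $\int_0^h e^{-au}\,du$ (or a tensor product thereof for the $\sigma^4$ piece), with nodes $\tfrac{3\pm\sqrt3}{6}h$ --- exactly the cumulative time positions of the splitting path (\ref{eq:high_order_strang1}). This is a nicer argument: it explains \emph{why} the order is precisely $5$ (exactness on cubics), it applies uniformly to all three pieces of $R_k^V$ at once, and it connects the accuracy of the moments back to the design principle of the path. You also close a small gap the paper leaves open by sketching (via a discrete Gr\"onwall bound on $\bE[Y_k^2]$) why the $L^2(\P)$ norm of the $Y_k$-dependent part of $R_k^V$ is controlled, whereas the paper simply asserts ``finite second moment of $Y_k$''.
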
\smallbreak
\begin{proof}
We first note that $\frac{1}{2}W_k + \sqrt{3}H_k$ and $\frac{1}{2}W_k - \sqrt{3}H_k$ are jointly normal and
\begin{align*}
\bE\Big[\Big(\frac{1}{2}W_k + \sqrt{3}H_k\Big)\Big(\frac{1}{2}W_k - \sqrt{3}H_k\Big)\Big] = \frac{1}{4}\cdot h - 3\cdot \frac{1}{12}h = 0.
\end{align*}
It thus follows that $\frac{1}{2}W_k + \sqrt{3}H_k$ and $\frac{1}{2}W_k - \sqrt{3}H_k$ are independent random variables.
Therefore, we see that in (\ref{append:cir_splitting}), the term $Y_k^{(3)}$ is independent of $\frac{1}{2}W_k - \sqrt{3}H_k$ and so
\begin{align*}
\bE\big[\m Y_k^{(4)} \m|\, Y_k \big] & = \bE\big[\m Y_k^{(3)} \m|\, Y_k \big] + \sigma\m\bE\Big[\big(Y_k^{(3)}\big)^\frac{1}{2} |\m Y_k \Big]\bE\Big[\frac{1}{2}W_k - \sqrt{3}H_k |\m Y_k \Big] + \frac{1}{8}\sigma^2 h\\
& = \bE\big[\m Y_k^{(3)} \m|\, Y_k \big] + \frac{1}{8}\sigma^2 h,\\
\bE\Big[\big(Y_k^{(4)}\big)^2 \m|\, Y_k \Big] & = \bE\Big[\big(Y_k^{(3)}\big)^2 \m|\, Y_k \Big] + \frac{3}{64} \sigma^4 h^2 + 2\sigma\m\bE\Big[\big(Y_k^{(3)}\big)^\frac{3}{2} \m|\, Y_k \Big] \bE\Big[\frac{1}{2}W_k - \sqrt{3}H_k \Big]\\
&\mm + \frac{3}{4}\sigma^2 h\m \bE\big[\m Y_k^{(3)} \m|\, Y_k \big] + \frac{1}{2}\sigma^3\m\bE\Big[\big(Y_k^{(3)}\big)^\frac{1}{2} |\m Y_k \Big] \bE\Big[\Big(\frac{1}{2}W_k - \sqrt{3}H_k\Big)^3 \,\Big]\\
& = \bE\Big[\big(Y_k^{(3)}\big)^2 \m|\, Y_k \Big] + \frac{3}{4}\sigma^2 h\m \bE\big[\m Y_k^{(3)} \m|\, Y_k \big] + \frac{3}{64} \sigma^4 h^2.
\end{align*}
Using $\var\big(Y_k^{(4)} \m|\, Y_k\big) = \bE\big[\big(Y_k^{(4)}\big)^2 \m|\, Y_k \big]  - \big(\bE\big[\m Y_k^{(4)} \m|\, Y_k \big]\big)^2$, the conditional variance is
\begin{align*}
\var\big(Y_k^{(4)} \m|\, Y_k\big) & = \var\big(Y_k^{(3)} \m|\, Y_k\big) +  \frac{1}{2}\sigma^2 h\m \bE\big[\m Y_k^{(3)} \m|\, Y_k \big] + \frac{1}{32} \sigma^4 h^2.
\end{align*}
Similarly, by the same calculation, we also have that
\begin{align*}
\bE\big[\m Y_k^{(2)} \m|\, Y_k \big] & = \bE\big[\m Y_k^{(1)} \m|\, Y_k \big] + \frac{1}{8}\sigma^2 h,\\
\var\big(Y_k^{(2)} \m|\, Y_k\big) & = \var\big(Y_k^{(1)} \m|\, Y_k\big) +  \frac{1}{2}\sigma^2 h\m \bE\big[\m Y_k^{(1)} \m|\, Y_k \big] + \frac{1}{32} \sigma^4 h^2.
\end{align*}
Using the above, it is straightforward to compute the conditional expectation of $Y_{k+1}\m$.
\begin{align*}
\bE\big[\m Y_{k+1} \m|\, Y_k \big] & = e^{-\frac{3-\sqrt{3}}{6} a h}\m\bE\big[\m Y_k^{(4)} \m|\, Y_k \big] + \widetilde{b}\big(1 - e^{-\frac{3-\sqrt{3}}{6} a h}\big)\\
&\hspace{-3.5mm} = e^{-\frac{3-\sqrt{3}}{6} a h}\m\bE\big[\m Y_k^{(3)} \m|\, Y_k \big] + \frac{1}{8}\sigma^2 e^{-\frac{3-\sqrt{3}}{6} a h} + \widetilde{b}\big(1 - e^{-\frac{3-\sqrt{3}}{6} a h}\big)\\
&\hspace{-3.5mm} = e^{-\frac{3+\sqrt{3}}{6} a h}\m\bE\big[\m Y_k^{(2)} \m|\, Y_k \big] + \widetilde{b}\big(1 - e^{-\frac{3+\sqrt{3}}{6} a h}\big) + \frac{1}{8}\sigma^2 h e^{-\frac{3-\sqrt{3}}{6} a h}\\
&\hspace{-3.5mm} = e^{-\frac{3+\sqrt{3}}{6} a h}\m\bE\big[\m Y_k^{(1)} \m|\, Y_k \big]  + \widetilde{b}\big(1 - e^{-\frac{3+\sqrt{3}}{6} a h}\big) + \frac{1}{8} \sigma^2 h\m \big(e^{-\frac{3+\sqrt{3}}{6} a h} + e^{-\frac{3-\sqrt{3}}{6} a h}\big)\\
&\hspace{-3.5mm} = e^{-ah} Y_k + \widetilde{b}\big(1 - e^{-a h}\big) + \frac{1}{8} \sigma^2 h\m e^{-\frac{3+\sqrt{3}}{6} a h} + \frac{1}{8}\sigma^2 h\m e^{-\frac{3-\sqrt{3}}{6} a h}.
\end{align*}
Thus we obtain equation (\ref{append:cir_mean}) as $\widetilde{b} = b - \frac{\sigma^2}{4a}$. Taylor expanding the above terms gives
\begin{align*}
&\frac{1}{2}\big( e^{-\frac{3+\sqrt{3}}{6} a h} + e^{-\frac{3-\sqrt{3}}{6} a h}\big)h\\
&\mm = \frac{1}{2}\Big(1 - \frac{3+\sqrt{3}}{6}\m ah + \frac{2+\sqrt{3}}{12}\m (ah)^2 - \frac{9+5\sqrt{3}}{216}\m (ah)^3 + O(h^4)\Big)h\\
&\mmmm + \frac{1}{2}\Big(1 - \frac{3-\sqrt{3}}{6}\m ah + \frac{2-\sqrt{3}}{12}\m (ah)^2 - \frac{9-5\sqrt{3}}{216}\m (ah)^3 + O(h^4)\Big)h\\
&\mm = h - \frac{1}{2}\m ah^2 + \frac{1}{6}\m a^2 h^3 - \frac{1}{24}\m a^3 h^4 + O(h^5)\\
&\mm = \frac{1-e^{-ah}}{a} + O(h^5),
\end{align*}
which is the required estimate for $R_k^E\m$. Similarly, we compute the variance of $Y_{k+1}$ as
\begin{align*}
&\var\big(Y_{k+1} \m|\, Y_k\big)\\
&\mm = e^{-\frac{3-\sqrt{3}}{3} a h}\var\big(Y_k^{(4)} \m|\, Y_k\big)\\
&\mm = e^{-\frac{3-\sqrt{3}}{3} a h}\var\big(Y_k^{(3)} |\m Y_k\big) + \frac{1}{2}\sigma^2 e^{-\frac{3-\sqrt{3}}{3} a h} h\m \bE\big[\m Y_k^{(3)} \m|\, Y_k \big] + \frac{1}{32} \sigma^4 h^2\m e^{-\frac{3-\sqrt{3}}{3} a h}\\
&\mm =  e^{-\frac{3+\sqrt{3}}{3} a h}\var\big(Y_k^{(2)} |\m Y_k\big) + \frac{1}{2}\sigma^2 h\m e^{-a h}\m\bE\big[\m Y_k^{(2)} \m|\, Y_k \big] + \frac{1}{2}\m\widetilde{b}\m\sigma^2 h\m \big(e^{-\frac{3-\sqrt{3}}{3} a h} - e^{-a h}\big)\\
&\mmmm + \frac{1}{32} \sigma^4 h^2\m e^{-\frac{3-\sqrt{3}}{3} a h}\\
&\mm = e^{-\frac{3+\sqrt{3}}{3} a h}\var\big(Y_k^{(1)} \m|\, Y_k\big) +  \frac{1}{2}\sigma^2 h\m e^{-\frac{3+\sqrt{3}}{3} a h}\m\bE\big[\m Y_k^{(1)} \m|\, Y_k \big] + \frac{1}{32} \sigma^4 h^2\m e^{-\frac{3+\sqrt{3}}{3} a h}\\
&\mmmm + \frac{1}{2}\sigma^2 h\m e^{-a h}\m\bE\big[\m Y_k^{(1)} \m|\, Y_k \big] + \frac{1}{16}\sigma^4 h^2\m e^{-a h} + \frac{1}{2}\m\widetilde{b}\m\sigma^2 h\m \big(e^{-\frac{3-\sqrt{3}}{3} a h} - e^{-a h}\big)\\
&\mmmm + \frac{1}{32} \sigma^4 h^2\m e^{-\frac{3-\sqrt{3}}{3} a h},
\end{align*}
and therefore
\begin{align*}
\var\big(Y_{k+1} \m|\, Y_k\big) & = \frac{1}{2}\sigma^2 h \big(e^{-\frac{9+\sqrt{3}}{6}ah} + e^{-\frac{9-\sqrt{3}}{6}ah}\big)Y_k\\
&\mm  + \frac{1}{2}\m\widetilde{b}\m\sigma^2 h\m \big(e^{-\frac{3+\sqrt{3}}{3} a h} + e^{-\frac{3-\sqrt{3}}{3} a h} - e^{-\frac{9+\sqrt{3}}{6} a h} - e^{-\frac{9-\sqrt{3}}{6} a h}\big)\\
&\mm  + \frac{1}{16} \sigma^4 h^2\Big(e^{-ah} + \frac{1}{2}\big(e^{-\frac{3+\sqrt{3}}{3} a h} + e^{-\frac{3-\sqrt{3}}{3} a h}\big)\Big).
\end{align*}
Just as for $e^{-\frac{3+\sqrt{3}}{6} a h} + e^{-\frac{3-\sqrt{3}}{6} a h}$, we will consider Taylor expansions of these terms.
\begin{align*}
\frac{1}{2}\Big( e^{-\frac{3+\sqrt{3}}{3} a h} + e^{-\frac{3-\sqrt{3}}{3} a h}\Big)h & = \frac{1-e^{-2ah}}{2a} + O(h^5),\\[6pt]
\frac{1}{2}\Big( e^{-\frac{9+\sqrt{3}}{6} a h} + e^{-\frac{9-\sqrt{3}}{6} a h}\Big)h
& = e^{-ah}\Big(\frac{1-e^{-ah}}{a}\Big) + O(h^5)\\
& = \frac{1}{a}\big(e^{-ah} - e^{-2ah}\big) + O(h^5),\\[6pt]
\frac{1}{2}\Big(e^{-ah} + \frac{1}{2}\big(e^{-\frac{3+\sqrt{3}}{3} a h} + e^{-\frac{3-\sqrt{3}}{3} a h}\big)\Big)h^2 & = \frac{1}{2}\Big(1 - ah + \frac{1}{2}(ah)^2 + O(h^3)\Big)h^2\\
&\mmm + \frac{1}{2}\Big(1 -  ah + \frac{2}{3} (ah)^2  + O(h^3)\Big)h^2\\
& = h^2 - ah^3 + \frac{7}{12} a^2 h^4 + O(h^5)\\
& = \frac{1}{a^2}\big(1-e^{-ah}\big)^2 + O(h^5),\\[6pt]
\frac{1}{2}\Big(e^{-\frac{3+\sqrt{3}}{3} a h} + e^{-\frac{3-\sqrt{3}}{3} a h} - e^{-\frac{9+\sqrt{3}}{6} a h} - e^{-\frac{9-\sqrt{3}}{6} a h}\Big) h \hspace*{-18mm} &\\
& = \frac{1-e^{-2ah}}{2a} - e^{-ah}\Big(\frac{1-e^{-ah}}{a}\big) + O(h^5)\\
& = \frac{1}{2a}\big(1-e^{-ah}\big)^2 + O(h^5).
\end{align*}
The result (\ref{append:cir_var}) follows from the above along with the finite second moment of $Y_k\m$.
\end{proof}

\end{document}